    \newtheorem{definition}{Definition}[section]
    \newtheorem{lemma}[definition]{Lemma}
    \newtheorem{theorem}[definition]{Theorem}
    \theoremstyle{remark}
    \newtheorem{remark}[definition]{Remark}
\title{Precise Limit Theorems for Lacunary Series}
\author[F.~Delbaen]{ Freddy Delbaen} 
\address[F.~Delbaen]{Department of Mathematics, ETH Z\"urich, R\"amistrasse 101, 8092, Z\"urich, Switzerland \,and  \,Institut f\"ur Mathematik, Universit\"at Z\"urich, Winterthurerstrasse 190, 8057 Z\"urich, Switzerland}
\email{delbaen@math.ethz.ch, }
\author[E.~Hovhannisyan]{Emma Hovhannisyan} 
\address[E.~Hovhannisyan]{\,Institut f\"ur Mathematik, Universit\"at Z\"urich, Winterthurerstrasse 190, 8057 Z\"urich, Switzerland}
\email{emma.hovhannisyan@math.uzh.ch}
\renewcommand{\d}{\delta}
\newcommand{\half}{\frac{1}{2}}
\newcommand{\g}{\gamma}
\newcommand{\N}{\mathbb{N}}
\newcommand{\R}{\mathbb{R}}
\newcommand{\C}{\mathbb{C}}
\renewcommand{\P}{\mathbb{P}}
\newcommand{\E}{\mathbb{E}}
\newcommand{\ckn}{c_{k, n}}
\newcommand{\akn}{a_{k, n}}
\renewcommand{\a}{\alpha}
\renewcommand{\l}{\lambda}
\begin{document}
 \maketitle \vspace{-5mm}

\begin{abstract}
Lacunary trigonometric and Walsh series satisfy limiting results that are typical for i.i.d. random variables such as the central limit theorem \cite{SZ47}, the law of the iterated logarithm \cite{W59} and several probability related limit theorems. For H{\"o}lder continuous, periodic functions this phenomenon does not hold in general. In \cite{K46} and \cite{K49}, the validity of the central limit theorem has been shown for the sequence $\left(f(2^k x)\right)_k$ and in the case of ``big gaps''. In this paper, we present an alternative approach to prove the above theorem based on martingale theory, which allows us to generalize the theorem to infinite product spaces of arbitrary probability spaces, equipped with the shift operator. 

In addition, we show the local limit theorems for lacunary trigonometric and Walsh series, and for H{\"o}lder continuous, periodic functions in the case of ``big gaps''. We also establish Berry-Esseen bounds and moderate deviations for lacunary Walsh series. Furthermore, we identify the scale at which the validity of the Gaussian approximation for the tails breaks. To derive these limiting results, the framework of mod-Gaussian convergence has been used.
\end{abstract}
{\bf MSC 2010 subject classifications:} Primary 42A55, 42A61; Secondary  60F05, 60F10, 11D04.\\
{\bf Keywords:} Lacunary trigonometric series, lacunary Walsh series, central limit theorem, local limit theorem, Berry-Esseen estimate,  moderate deviations, mod-Gaussian convergence.

\section{Introduction}
\subsection{Limit theorems for lacunary trigonometric and Walsh series and H\"older continuous periodic functions}
If $m_1 < m_2 < \dots $ is an infinite sequence of integers such that for some $q>1$ and for all $k\in \N$
\begin{align}\label{lacunary}
\frac{m_{k+1}}{m_k} \geq q >1,
\end{align} 
we say that it is lacunary. 

For such a sequence let  us consider the following trigonometric series:
\begin{align}\label{tryg_s}
\sum_{k=1}^{\infty} \big( a_k \cos(2 \pi m_k x) + b_k  \sin(2 \pi m_k x)\big), \quad 0 \leq x \leq 1.
\end{align}
It is well known that such a trigonometric series (\ref{tryg_s}) satisfies  limiting results that are typical for independent and idenitically distributed random variables. For instance, Salem and Zygmund \cite{SZ47} proved the following theorem.
\begin{theorem}[CLT for trigonometric series \cite{SZ47}] \label{SZ}
Let $S_n(x) = \sum_{k=1}^n \big( a_k \cos(2 \pi m_k x) + b_k  \sin(2 \pi m_k x)\big)$, $0 \leq x \leq 1$ be the $n$th partial sum of (\ref{tryg_s}) for a lacunary sequence $\left( m_k \right)_{k \in \N}.$ If 
\begin{align*}
A_n := &\left( \half \sum_{k=1}^n \left( a_k^2 + b_k^2 \right) \right)^{\frac{1}{2}} \to \infty,\\
&\sqrt{a_n^2 + b_n^2} = o(A_n),
\end{align*}
then for $n \to \infty,$
\begin{align}\label{tryg_CLT}
\left| \left\lbrace  x \in [0,1]; \frac{S_n(x)}{ A_n} \leq t \right\rbrace  \right| \to \frac{1}{\sqrt{2 \pi}} \int_{-\infty}^t e^{-\frac{u^2}{2}} du.
\end{align}
Here $| \cdot |$ denotes Lebesgue measure.
\end{theorem}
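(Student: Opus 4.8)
The plan is to establish the convergence in distribution by the method of moments. Since the standard Gaussian law is moment-determinate, by the Fréchet--Shohat theorem it suffices to show that for every fixed integer $p \geq 1$,
\[
\int_0^1 \left(\frac{S_n(x)}{A_n}\right)^p dx \longrightarrow \frac{1}{\sqrt{2\pi}}\int_{-\infty}^{\infty} u^p e^{-u^2/2}\, du, \qquad n \to \infty,
\]
the right-hand side being $0$ for odd $p$ and $(p-1)!!$ for even $p$. Convergence of the distribution functions in \eqref{tryg_CLT} then follows.

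First I would rewrite each summand in amplitude-phase form. Setting $r_k = \sqrt{a_k^2 + b_k^2}$ and choosing $\varphi_k$ with $a_k = r_k \cos\varphi_k$, $b_k = r_k \sin\varphi_k$, one has $a_k\cos(2\pi m_k x) + b_k \sin(2\pi m_k x) = r_k \cos(2\pi m_k x - \varphi_k)$, so that $A_n^2 = \tfrac12 \sum_{k=1}^n r_k^2$. Expanding the power, $\int_0^1 S_n(x)^p\,dx$ becomes a sum over $(k_1,\dots,k_p)\in\{1,\dots,n\}^p$ of $r_{k_1}\cdots r_{k_p}$ times $\int_0^1 \prod_{j=1}^p \cos(2\pi m_{k_j}x - \varphi_{k_j})\,dx$. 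Converting each cosine to exponentials and using the orthogonality relation $\int_0^1 e^{2\pi i M x}\,dx = \mathbbm{1}_{\{M=0\}}$, only those sign patterns $\varepsilon \in \{-1,+1\}^p$ with $\sum_{j=1}^p \varepsilon_j m_{k_j} = 0$ survive, each carrying a phase factor $e^{-i\sum_j \varepsilon_j \varphi_{k_j}}$.

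At this point the gap condition \eqref{lacunary} enters decisively. The leading contribution comes from the diagonal terms in which $p = 2\ell$ is even and the indices split into $\ell$ coincident pairs carrying opposite signs; on such a configuration the phases cancel and a short computation gives a real contribution $\prod_{i}\tfrac12 r_{i}^2$. Summing over the $(2\ell-1)!!$ matchings and the free index choices yields $(2\ell-1)!!\,\big(\tfrac12\sum_{k=1}^n r_k^2\big)^\ell = (2\ell-1)!!\,A_n^{2\ell}$ to leading order. Dividing by $A_n^{2\ell}$ produces exactly the Gaussian moments, while odd moments vanish since an odd number of integer frequencies can never be paired off.

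The main obstacle is to show that all remaining terms are $o(A_n^p)$. These are of two kinds. First, diagonal-type terms in which some index is repeated more than twice; their total is controlled by $\sum_k r_k^4 \le \big(\max_{k\le n} r_k^2\big)\sum_k r_k^2$, and since the hypotheses $A_n\to\infty$ and $r_n = o(A_n)$ together force $\max_{k\le n} r_k = o(A_n)$, this is negligible relative to the main term. Second, and far more delicate, are the genuine off-diagonal resonances: distinct frequencies admitting a nontrivial relation $\sum_j \varepsilon_j m_{k_j} = 0$. For ``big gaps'' (say $q$ above a threshold) the geometric separation $m_{k} > \sum_{j<k} m_{k_j}$ rules these out entirely and the argument is clean; for general $q > 1$ one must instead bound the number of such solutions by a Diophantine counting argument and verify that their weighted contribution is of lower order than $A_n^{2\ell}$. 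This Diophantine bookkeeping is where the real work lies, and it is presumably what the martingale reformulation announced in the abstract is designed to circumvent.
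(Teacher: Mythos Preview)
The paper does not prove this theorem. Theorem~\ref{SZ} is stated in the introduction as a classical result of Salem and Zygmund \cite{SZ47} and is subsequently used as an input (to verify condition~\ref{H1}) in the proof of Theorem~\ref{tryg}. There is therefore no proof in the paper to compare your proposal against.

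On the proposal itself: the method of moments is a legitimate route to this result, and your identification of the leading pairing contribution $(2\ell-1)!!\,A_n^{2\ell}$ is correct. But the sketch stops precisely at the hard step. For general $q>1$ the off-diagonal resonances $\sum_j \varepsilon_j m_{k_j}=0$ with not-all-paired indices do occur, and you explicitly defer their treatment (``this Diophantine bookkeeping is where the real work lies''). So what you have is a plan, not a proof. Note also that your sentence ``odd moments vanish since an odd number of integer frequencies can never be paired off'' is not an argument: the absence of pairings only kills the main term, and the off-diagonal solutions for odd $p$ still have to be shown to be $o(A_n^p)$ by the same counting you postpone. The paper's Lemmas~\ref{lemma:3.1} and~\ref{lemma:3.2} carry out exactly this sort of Diophantine counting, but they are used under the stronger coefficient hypotheses \eqref{dnq1}--\eqref{dnq2}; under the minimal Salem--Zygmund hypotheses $A_n\to\infty$, $r_n=o(A_n)$ alone, one needs a sharper argument (and indeed the original proof in \cite{SZ47} proceeds via characteristic functions rather than moments).

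Finally, a correction to your closing remark: the martingale reformulation announced in the abstract concerns Kac's theorem (Theorem~\ref{KacT}) for $f(2^k x)$, not the Salem--Zygmund theorem, and it does not circumvent the Diophantine counting you describe.
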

In a subsequent paper Salem and Zygmund \cite{SZ48} extended the central limit theorem to the case where the $m_k$'s are not necessarily integers.
Erd\H{o}s \cite{E62} relaxed the lacunarity assumption (\ref{lacunary}) and showed that the central limit theorem (\ref{tryg_CLT}) still holds for the trigonometric series $\sum_{k=1}^n \cos(2 \pi m_k x)$ under the assumption 
\begin{align}
\frac{m_{k+1}}{m_k} > 1+\frac{c_k}{\sqrt{k}},\label{Erdos_lac}
\end{align}
with $c_k \to \infty.$ Moreover, the condition (\ref{Erdos_lac}) is optimal in the sense that for every $c$ there is a sequence $\left(m_k \right)_{k \in \N}$ satisfying $\frac{m_{k+1}}{m_k} > 1+\frac{c}{\sqrt{k}}$ such that the CLT does not hold. 
Under the lacunarity condition (\ref{lacunary}), \cite{W59}  
proved that if 
\begin{align*}
a_n = o\left(\frac{A_n}{\sqrt{\log \log (A_n) }}\right),
\end{align*}
then the trigonometric series $\sum_{k=1}^n a_k \cos(2 \pi m_k x) $ obeys the law of the iterated logarithm (LIL).

Using martingale techniques, Philipp and Stout \cite{PS75} established the almost sure invariance principle for the trigonometric series $\sum_{k=1}^n a_k \cos(2 \pi m_k x).$ More precisely, they showed that if 
\begin{align*}
a_n = o\left(A_n^{1-\delta} \right)
\end{align*}
 for some $\delta > 0,$ then on a richer probability space for $\l < \frac{\delta}{32}$ there exists a Brownian motion  $\lbrace X_t, t \geq 0 \rbrace$ such that for  $A_n^2 \leq t < A_{n+1}^2,$ 
\begin{align*}
\sum_{k=1}^n a_k \cos(2 \pi m_k x)  - X_t <<  t^{\frac{1}{2} - \l} \quad \mathrm{a.s.}.
\end{align*}
 This matching of trajectories of the trigonometric series with the trajectories of the Brownian motion allows to deduce directly the wide range of limiting results such as the CLT, LIL, Chung's LIL and the arcsin law. 
 
Morgentaler \cite{M57} investigated statistical properties of Walsh series and proved a central limit theorem for subsequences of lacunary Walsh series under similar assumptions as for lacunary trigonometric series. To define Walsh functions, we need the Rademacher functions $\left(r_n(x) \right)_{n \geq 0} $, introduced by Rademacher \cite{R22}, and constructed as follows:
\begin{align*}
r_n(x) = r_0(2^nx),
\end{align*}
where 
\begin{align*}
&r_0(x) = \begin{cases} 1, & \mbox{if } 0 \leq x < \frac{1}{2} \\ -1, & \mbox{if } \frac{1}{2} \leq x <1 \end{cases}.
\end{align*}
The Rademacher functions $\left(r_n(x) \right)_{n \geq 0} $ form an orthonormal system on $[0,1].$

Let $n\in \mathbb{N}$ have following unique dyadic expansion $n = \sum_{i=0}^\infty k_i 2^i,$ where $k_i \in \lbrace 0,1 \rbrace$ and $l_1, l_2, \dots, l_m$ be the coefficients for which $k_{l_i} = 1.$ 
The $n$th Walsh function $W_n(x)$ is defined as 
\begin{align}\label{Walsh}
W_n(x) = \prod_{i=1}^m r_{l_i}(x) = \prod_{i=1}^\infty r_i^{k_i} (x).
\end{align} 
\begin{theorem}[CLT for Walsh series \cite{M57}] \label{Walsh_CLT}
Let $S_n(x) = \sum_{k=1}^n a_k W_{m_k}(x)$, $0 \leq x \leq 1$ be the $n$th partial sum of the Walsh series (\ref{Walsh}) with the sequence $(m_k)_{k \in \N}$ satisfying the condition (\ref{lacunary}). If
\begin{align*}
A_n= &\left( \sum_{k=1}^n a_k^2 \right)^{\frac{1}{2}} \to  \infty,\\
&a_n = o(A_n),
\end{align*}
then for $n \to \infty$
\begin{align}\label{Walsh_CLT}
\left| \left\lbrace  x \in [0,1]; \frac{S_n(x)}{ A_n} \leq t \right\rbrace  \right| \to \frac{1}{\sqrt{2 \pi}} \int_{-\infty}^t e^{-\frac{u^2}{2}} du.
\end{align}
 
\end{theorem}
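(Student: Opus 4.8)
The plan is to prove the CLT for lacunary Walsh series (Theorem~\ref{Walsh_CLT}) by exploiting the multiplicative, combinatorial structure of Walsh functions to control the moments of $S_n/A_n$ and then invoke the method of moments. The key observation is that each Walsh function $W_{m_k}$ takes values in $\{-1,+1\}$, has integral zero over $[0,1]$, and that products of Walsh functions are again Walsh functions: $W_i \cdot W_j = W_{i \oplus j}$, where $\oplus$ denotes bitwise addition modulo $2$ (dyadic addition). In particular $\int_0^1 W_{i_1}\cdots W_{i_p}\,dx = 1$ if $i_1 \oplus \cdots \oplus i_p = 0$ and $0$ otherwise. Thus computing $\E[(S_n/A_n)^p]$ reduces to a counting problem over the solutions of dyadic equations among the indices $m_{k_1},\dots,m_{k_p}$.

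First I would set $\sigma_k(x) = a_k W_{m_k}(x)$ and expand $\E\big[(S_n/A_n)^p\big] = A_n^{-p}\sum a_{k_1}\cdots a_{k_p}\int_0^1 W_{m_{k_1}}\cdots W_{m_{k_p}}\,dx$, where the sum runs over all $p$-tuples $(k_1,\dots,k_p)\in\{1,\dots,n\}^p$. The integral is nonzero exactly when the indices $m_{k_1},\dots,m_{k_p}$ cancel in pairs (or more generally in the dyadic sense). The lacunarity condition (\ref{lacunary}) with ratio $q>1$ forces the dyadic digit-blocks occupied by successive $W_{m_k}$ to separate as $k$ grows, so the only surviving contributions of lowest order are the perfect matchings of the $p$ indices into $p/2$ coincident pairs $k_{2i-1}=k_{2i}$. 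For even $p=2r$, the dominant term is the number of pairings $(2r)!/(2^r r!)$ times $\big(\sum_k a_k^2\big)^r = A_n^{2r}$, and I would show all other pairing patterns (those requiring three or more indices to cancel dyadically, or non-adjacent cancellations made possible by carries) contribute a lower power of $A_n$ under the gap condition. For odd $p$ the leading term vanishes. This yields $\E\big[(S_n/A_n)^p\big]\to (2r-1)!! = \frac{(2r)!}{2^r r!}$ for $p=2r$ and $\to 0$ for $p$ odd, which are precisely the Gaussian moments.

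The second half is a moment-method convergence argument: since the standard normal distribution is determined by its moments (Carleman's condition holds trivially), convergence of all moments of $S_n/A_n$ to those of $\mathcal{N}(0,1)$ implies convergence in distribution, hence (\ref{Walsh_CLT}). Here the hypothesis $a_n = o(A_n)$ is exactly what is needed: it guarantees that no single term dominates, so that the Lindeberg-type negligibility holds and the error terms in the moment expansion are genuinely of smaller order; without it one cannot rule out a single $a_k$ carrying a constant fraction of the variance and spoiling Gaussianity.

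The main obstacle will be the bookkeeping of the non-diagonal contributions in the moment expansion, specifically controlling the \emph{dyadic} cancellation events $m_{k_1}\oplus\cdots\oplus m_{k_p}=0$ that are not simple pairings. Unlike the trigonometric case, where cancellation is governed by linear relations $\pm m_{k_1}\pm\cdots\pm m_{k_p}=0$ and the gap condition kills long resonances via a geometric-series bound, the dyadic (XOR) structure admits more exotic coincidences coming from carry-free digit overlaps; one must verify that the lacunarity gap $q>1$ still forces the dyadic supports to be eventually disjoint enough that multi-fold cancellations are rare and contribute $o(A_n^p)$. I would handle this by grouping indices according to their highest differing dyadic digit and bounding the number of admissible index-tuples combinatorially, showing each non-matching pattern loses at least one factor of $A_n$ relative to the diagonal.
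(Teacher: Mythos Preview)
The paper does not prove this theorem. Theorem~\ref{Walsh_CLT} is stated in the introduction as a known result of Morgenthaler \cite{M57} and is never given a proof in the text; it serves as background for the paper's own Theorem~\ref{walsh-modG} (mod-Gaussian convergence for Walsh series), which is proved in Section~\ref{Section:4} under stronger hypotheses.

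That said, your plan is sound and closely parallel to what the paper does for its own Walsh results. The combinatorial core you identify---controlling the solutions of $m_{k_1}\oplus\cdots\oplus m_{k_p}=0$---is exactly the content of the paper's Lemmas~\ref{lemma:4.1} and~\ref{lemma:4.2}. Two remarks are worth making. First, for $q\ge 2$ the paper observes something stronger than a moment bound: Lemma~\ref{lemma:4.1} shows there are \emph{no} nontrivial solutions of the dyadic equation, and then (via the argument around \eqref{indep_var}) the $W_{m_k}$ are genuinely independent $\pm1$ random variables. In that regime the CLT is immediate from Lindeberg, and your moment expansion collapses with no error terms at all; you should state this shortcut rather than treat the case uniformly. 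Second, for $1<q<2$ your intuition that ``the dyadic digit-blocks occupied by successive $W_{m_k}$ separate'' is not literally correct---consecutive $m_k$ can share their top binary digits---and this is where the real work lies. The paper's Lemma~\ref{lemma:4.2} bounds the number of solutions of $m_{k_1}\oplus\cdots\oplus m_{k_l}=A$ by $(C_q\,n)^{l/3}$, which is precisely the estimate you would need to show the non-pairing contributions to the $p$th moment are $o(A_n^p)$.

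The methodological difference is that the paper works with the moment generating function $\int_0^1\prod_k(1+W_{m_k}(x)\tanh(z a_{k,n}))\,dx$ and expands the product, whereas you propose to compute each moment separately and invoke the method of moments. Both routes reduce to the same dyadic counting problem; the generating-function route packages all moments at once and is what yields the finer mod-Gaussian statement, while your moment-by-moment route is more elementary and perfectly adequate for the bare CLT.
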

F{\"o}ldes (\cite{F72, F75}) and Takahashi (\cite{T75}) deduced the central limit theorem as well as the law of iterated logarithm for the Walsh series under the weaker lacunarity assumptions (\ref{Erdos_lac}).

Next we look at general functions $f$ that are neither cosine nor Walsh functions. In this general case, the arithmetic structure of  the sequence $( m_k )_{k }$  matters and the lacunarity assumption is not sufficient to deduce central limit theorems. Kac \cite{K46} and Fortet \cite{F40} showed that if $f$  satisfies a Lipschitz condition or is  of bounded variation then the central limit theorem holds for $m_k = 2^k.$ 
\begin{theorem}[CLT for the type $\sum f\left( 2^k x\right)$ \cite{K46}]\label{KacT}
Let $f(x)$ be a measurable function defined on $[0, 1)$ and extended  periodically by setting $f(x + 1)= f(x),$ let also 
\begin{align*}
&\int_0^1 f(x) dx = 0,\\
& \lim_{n \to \infty} \frac{1}{n} \int_0^1 \left(\sum_{k=1}^n f\left( 2^k x \right) \right)^2 dx := \sigma^2 \neq 0. 
\end{align*}
Let $f(x) = \sum_{n=1}^\infty e_n \cos(2 \pi n x) $ be its Fourier expansion such that
\begin{align*}
|e_n| < \frac{M}{n^{\beta}}, \quad \beta > \half, \quad  n \in \N,
\end{align*}
or let $f$ satisfy a H{\"o}lder continuity condition,
then
\begin{align*}
\left| \left\lbrace  x \in [0,1]; \frac{1}{\sqrt{n}}\sum_{k=1}^n f(2^k x) \leq t\right\rbrace  \right| \to \frac{1}{\sigma \sqrt{2 \pi} } \int_{-\infty}^t e^{-\frac{u^2}{2\sigma^2 }} du.
\end{align*}
\end{theorem}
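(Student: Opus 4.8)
The plan is to realise $\big(f(2^kx)\big)_k$ as a stationary functional of the doubling map and to prove the central limit theorem by a martingale approximation of Gordin type. I work on $([0,1),\mathcal B,\lambda)$ with $\lambda$ Lebesgue measure and let $T\colon x\mapsto 2x\bmod 1$ be the doubling map, so that $f(2^kx)=f\circ T^k$ and $S_n:=\sum_{k=1}^n f(2^kx)=\sum_{k=1}^n f\circ T^k$; recall that $T$ is measure preserving and exact, hence ergodic (indeed mixing). Write $U$ for the Koopman operator $Uh=h\circ T$ and $L=U^{*}$ for the transfer (Perron--Frobenius) operator, which satisfies $LU=\mathrm{Id}$ and $UL=\E[\,\cdot\mid T^{-1}\mathcal B]$. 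The goal is to split $f$ into a reverse-martingale difference plus a coboundary and then invoke the stationary ergodic martingale central limit theorem.

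The analytic heart of the argument is a geometric decay estimate for $\|L^kf\|_2$. On the Fourier side $L$ acts by $L\,e^{2\pi i n x}=e^{2\pi i(n/2)x}$ for even $n$ and $L\,e^{2\pi i n x}=0$ for odd $n$, so that $\|L^kf\|_2^2=\half\sum_{2^k\mid n}e_n^2$. Under $|e_n|<Mn^{-\beta}$ with $\beta>\half$ this yields $\|L^kf\|_2\le C\,2^{-k\beta}$, hence $\sum_{k\ge1}\|L^kf\|_2<\infty$. In the H\"older case I would obtain the same summability from the spectral gap (Lasota--Yorke / Ruelle) of $L$ on the space $C^{\gamma}$ of H\"older functions: since $\int_0^1 f=0$, the component of $f$ orthogonal to the constant leading eigenfunction decays geometrically, $\|L^kf\|_{C^{\gamma}}\le C\rho^{k}$ with $\rho<1$, and a fortiori in $L^2$. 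Granting $g:=\sum_{k\ge1}L^kf\in L^2$, I set $\psi:=f+g-Ug$; using $LU=\mathrm{Id}$ and $Lg=g-Lf$ one checks directly that $L\psi=0$, i.e. $\E[\psi\mid T^{-1}\mathcal B]=0$.

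With this decomposition, $f=\psi-g+Ug$ telescopes to $S_n=M_n+g\circ T^{n+1}-g\circ T$, where $M_n=\sum_{k=1}^n\psi\circ T^k$ and the coboundary remainder is bounded by $2\|g\|_2$ in $L^2$. Because $\E[\psi\circ T^k\mid T^{-(k+1)}\mathcal B]=0$ and $T^{-(k+1)}\mathcal B$ is decreasing, $(M_n)$ is a stationary reverse martingale, and the martingale central limit theorem for stationary ergodic differences gives $n^{-1/2}M_n\Rightarrow N(0,\|\psi\|_2^2)$; since $n^{-1/2}\big(g\circ T^{n+1}-g\circ T\big)\to0$ in $L^2$, Slutsky transfers the limit to $n^{-1/2}S_n$. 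Orthogonality and stationarity of the differences give $\|M_n\|_2^2=n\|\psi\|_2^2$, so $\tfrac1n\|S_n\|_2^2\to\|\psi\|_2^2$; comparing with the hypothesis identifies $\|\psi\|_2^2=\sigma^2$, and the nondegeneracy $\sigma^2\neq0$ ensures the limit is a genuine Gaussian with density $\frac{1}{\sigma\sqrt{2\pi}}e^{-u^2/(2\sigma^2)}$, which is the claim. I expect the main obstacle to be precisely the decay of $\|L^kf\|_2$: under the Fourier hypothesis it is the elementary computation above (crucially using $2\beta>1$), but for general H\"older exponents the naive bound $|\hat f(n)|=O(n^{-\gamma})$ is useless when $\gamma\le\half$, and one must genuinely exploit the spectral gap of the transfer operator (equivalently, exponential decay of correlations of $T$ for H\"older observables). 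Everything else — the algebra of the splitting, ergodicity of $T$, and the martingale limit theorem — is routine once this decay is secured.
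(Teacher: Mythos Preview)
Your argument is correct and is the standard Gordin martingale--coboundary route: summability of $\|L^kf\|_2$ (from the Fourier decay $|e_n|<Mn^{-\beta}$ with $2\beta>1$, or from the spectral gap of the doubling transfer operator on H\"older observables) gives $g=\sum_{k\ge1}L^kf\in L^2$, the split $f=\psi-(g-Ug)$ with $L\psi=0$ produces a stationary reverse-martingale difference, and the Billingsley--Ibragimov martingale CLT plus Slutsky finishes.

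The paper does not reprove Kac's theorem directly; it proves instead the generalisation Theorem~\ref{CLT_KAC_general} (hypothesis $\sum_r\|f-\E[f\mid\mathcal D_r]\|_2<\infty$, which H\"older continuity implies) by a genuinely different mechanism. Rather than decomposing $f$ via the transfer operator, the paper truncates in the \emph{forward} dyadic filtration: with $f_r=\E[f\mid\mathcal D_r]$ and $\phi_r=f-f_r$, the sequence $\big(f_r(2^kx)\big)_k$ is $r$-dependent (blocks separated by $r$ shifts are independent), so Diananda's CLT for $m$-dependent variables applies to $n^{-1/2}\sum_{k<n}f_r(2^kx)$; the remainder is controlled by the estimate $\frac1n\big\|\sum_{k<n}\phi_r(2^kx)\big\|_2^2\le\|\phi_r\|_2^2+2\|\phi_r\|_2\sum_{s>r}\|\phi_s\|_2$, which is made small by taking $r$ large. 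Your approach is the cleaner dynamical argument and makes the link with decay of correlations explicit; the paper's $m$-dependence approximation is more elementary (no spectral theory) and, crucially for the authors' purposes, transports verbatim to arbitrary infinite product probability spaces equipped with the shift (their Theorem~\ref{CLT_shift}), a setting where no natural transfer operator with a spectral gap is available.
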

Note that when instead of the lacunarity assumption, one considers the case of ``big gaps'' i.e. 
\begin{align}
\lim_{k \to \infty} \frac{m_{k+1}}{m_k} \to \infty, \label{biggap}
\end{align}
the arithmetic structure of the sequence $( m_k)_{k \geq 1}$ becomes irrelevant and the central limit theorem holds  (\cite{K49, T61}).
\begin{theorem}[CLT for ``big gaps'' \cite{K49}]
Let $f(x)$ be a measurable function satisfying the assumptions of Theorem \ref{KacT} and $( a_k)_{k \geq 1} $ be a sequence of real numbers such that  
\begin{align*}
A^2_n := \sum_{k=1}^n a_k^2 \to \infty, \quad \max_{1 \leq k \leq n} |a_k| = o(A_n),
\end{align*}
then 
\begin{align*}
\left| \left\lbrace  x \in [0,1]; \frac{1}{A_n} \sum_{k=1}^n a_k f(m_k x) \leq t \right\rbrace  \right| \to \frac{1}{\sigma \sqrt{2 \pi} } \int_{-\infty}^t e^{-\frac{u^2}{2 \sigma^2 }} du,
\end{align*}
where $( m_k )_k$ is a sequence of integers satisfying the condition (\ref{biggap}).
\end{theorem}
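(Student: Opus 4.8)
\emph{Proof strategy.} The plan is to prove the convergence by the method of moments, after first reducing $f$ to a trigonometric polynomial. Write $S_n(x)=\sum_{k=1}^n a_k f(m_k x)$ and $\sigma^2=\int_0^1 f(x)^2\,dx$; by periodicity $\int_0^1 f(m_k x)^2\,dx=\sigma^2$ for every $k$, and this is the variance that will govern the limit, since under (\ref{biggap}) the cross-scale correlations vanish and the normalized variance tends to $\sigma^2$ (rather than to the $2^k$-constant of Theorem \ref{KacT}). First I would replace $f$ by the partial sum $f_N(x)=\sum_{n=1}^N e_n\cos(2\pi n x)$ of its Fourier series, with $\sigma_N^2=\int_0^1 f_N^2$. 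Under the hypothesis $|e_n|<M n^{-\beta}$, $\beta>\half$, one has $\int_0^1 (f-f_N)^2=\sum_{n>N}e_n^2=O(N^{1-2\beta})\to0$, and H\"older continuity gives the same through the decay of the Fourier coefficients; in particular $\sigma_N^2\to\sigma^2$. Because the cross-correlations $\int_0^1 (f-f_N)(m_k x)\,(f-f_N)(m_l x)\,dx$ for $k\neq l$ are, via (\ref{biggap}) and Cauchy--Schwarz, dominated by Fourier tails that vanish with $N$, the normalized sums built from $f$ and from $f_N$ satisfy $\int_0^1 (A_n^{-1}\sum_k a_k(f-f_N)(m_k x))^2\,dx\le\delta_N$ uniformly in $n$, with $\delta_N\to0$. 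A standard approximation argument on characteristic functions then reduces the theorem to proving, for each fixed $N$, that $A_n^{-1}\sum_{k=1}^n a_k f_N(m_k x)\Rightarrow\mathcal{N}(0,\sigma_N^2)$, and afterwards letting $N\to\infty$ with $\sigma_N^2\to\sigma^2$.

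The heart of the argument is a decoupling forced by the big gaps. For fixed $N$ the function $f_N(m_k x)$ is carried by the frequencies $\{m_k,2m_k,\dots,Nm_k\}$. Since $m_{k+1}/m_k\to\infty$, for each moment order $p$ there is a threshold $k_0=k_0(N,p)$ such that whenever $k_0\le k_1<\dots<k_r$, $1\le\nu_i\le N$ and $\epsilon_i\in\{\pm1\}$, the equation $\sum_i\epsilon_i\nu_i m_{k_i}=0$ has no solution: the top term satisfies $\nu_r m_{k_r}\ge m_{k_r}$, while $\sum_{i<r}\nu_i m_{k_i}\le rN\,m_{k_{r-1}}<m_{k_r}$ once the gap exceeds $rN$. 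Hence, for index tuples lying entirely above $k_0$, the integral $\int_0^1\prod_i f_N(m_{k_i}x)\,dx$ vanishes as soon as one index has multiplicity one, and otherwise factorizes over the groups of equal indices into $\prod_j\int_0^1 f_N^{\,s_j}$, $s_j$ being the size of the $j$th group. Expanding $A_n^{-p}\sum_{k_1,\dots,k_p}(\prod_i a_{k_i})\int_0^1\prod_i f_N(m_{k_i}x)\,dx$, only configurations in which every index occurs at least twice survive; those splitting the $p=2q$ slots into $q$ pairs contribute $\sigma_N^{2q}(2q-1)!!+o(1)$, and the odd moments contribute $0$, which are exactly the moments of $\mathcal{N}(0,\sigma_N^2)$.

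It then remains to discard the other configurations. A group of size $s\ge3$ at one index $j$ produces a factor $a_j^s\int_0^1 f_N^{\,s}$; since $\sum_j |a_j|^s\le(\max_{k\le n}|a_k|)^{s-2}\sum_j a_j^2=o(A_n^{s-2})A_n^2=o(A_n^s)$ and the remaining $p-s$ slots are bounded by $A_n^{p-s}$, each such configuration is $o(A_n^p)$ and so negligible after division by $A_n^p$; this is exactly where the hypothesis $\max_{1\le k\le n}|a_k|=o(A_n)$ enters. The boundary configurations, those meeting an index below $k_0(N,p)$, form $O_{N,p}(1)$ products of bounded size with total $O(A_n^{-1})\to0$, since $A_n\to\infty$. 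All moments of $A_n^{-1}\sum_k a_k f_N(m_k x)$ therefore converge to those of $\mathcal{N}(0,\sigma_N^2)$, and as the normal law is determined by its moments the required convergence in distribution follows; combined with the first paragraph this yields the theorem with $\sigma^2=\int_0^1 f^2$. The hard part, I expect, is the decoupling of the second paragraph: one must check that the arithmetic resonances $\sum_i\epsilon_i\nu_i m_{k_i}=0$ truly disappear above the threshold and that the finitely many low resonant indices are small enough to be absorbed into the error. It is precisely here, and not in the combinatorics, that (\ref{biggap}) is indispensable and that the arithmetic structure of $(m_k)$ becomes irrelevant.
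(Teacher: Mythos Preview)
This theorem is quoted in the paper as a background result of Kac; the paper does not give its own proof of it. The closest thing the paper \emph{does} prove is Theorem~\ref{mod-Gauss for Holder}, and there the method is quite different from yours: one approximates $f(m_k x)$ by a step function $g_k(m_k x)$ that is constant on the intervals of length $1/m_{k+1}$, so that under the extra hypothesis $m_{k+1}/m_k\in\{2,3,\dots\}$ the resulting family $\bigl(g_k(m_k\cdot)\bigr)_k$ is \emph{exactly} independent, after which the argument proceeds via characteristic functions. Your route---Fourier truncation to $f_N$ followed by the method of moments, using the big-gap condition to kill all frequency resonances $\sum_i\epsilon_i\nu_i m_{k_i}=0$ among distinct indices above a threshold---is the classical approach and is essentially correct; you also correctly identify that in the big-gap regime the limiting variance is $\int_0^1 f^2$ rather than the $2^k$-constant of Theorem~\ref{KacT}.

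One step is phrased too loosely. You say the ``boundary configurations'', those touching an index below $k_0(N,p)$, form $O_{N,p}(1)$ products with total contribution $O(A_n^{-1})$. The count is not $O_{N,p}(1)$: fixing one index $<k_0$ still leaves $p-1$ indices free, and the crude bound on that part of the sum is $(\sum_k|a_k|)^{p-1}$, which is not controlled by $A_n^{p-1}$ under the stated hypotheses. The clean fix is not to estimate these configurations inside the moment expansion at all, but to delete the finitely many terms with $k<k_0$ from $S_n$ \emph{before} expanding: this changes $A_n^{-1}S_n$ by at most $A_n^{-1}\|f_N\|_\infty\sum_{k<k_0}|a_k|\to0$ uniformly in $x$, so the limit in distribution is unaffected, and afterwards every index in the moment expansion lies above the threshold and your decoupling and factorization go through without exception.
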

Gaposhkin \cite{G70}, and Aistleitner and Berkes \cite{AB10} studied necessary and sufficient conditions for $\left( f (m_k x) \right)_k$ to satisfy the central limit theorem. The conditions are connected with a number of solutions of a certain Diophantine equation.  The further results on asymptotic properties of the periodic H{\"o}lder continuous functions as well as for the periodic functions of bounded variation can be found in \cite{A10, AE12, B78, F94, F08, G66, I51, M50, P92}.

In this article, we have succeeded in proving local limit theorems (LLT) and other fine asymptotic results for lacunary trigonometric and Walsh series, and H\"older continuous, periodic functions. To derive these asymptotic results we have used the machinery of mod-Gaussian convergence that is presented in the next subsection. 

 The structure of the paper goes as follows. The main results of the paper are presented in Section \ref{main_results}. In particular, there we claim the validity of the mod-Gaussian convergence for  lacunary trigonometric and Walsh series, and H\"older continuous, periodic functions under some additional assumptions. Note that the local limit theorem will be one of the consequences of the mod-Gaussian convergence.  In addition, using martingale techniques we present an alternative approach to prove a central limit theorem for the sequence $\left(f(2^k x)\right)_k$ (see  Kac \cite{K46}). The latter allows us to generalize this theorem to infinite product spaces of arbitrary probability spaces. The proofs of these results can be found in Sections \ref{Section:3}, \ref{Section:4} and  \ref{Section:5}. 
 
\subsection{The notion of mod-Gaussian convergence}
Recently a new probabilistic tool, mod-$\phi$ convergence, was introduced and developed in the articles \cite{DKN15, JKN11, KN10, KN12, BKN14, FMN16, FMN17, BMN17} sometimes with small variations in the definition, in connection with several problems and examples coming from various areas of mathematics such as number theory, graph theory, random matrix theory \cite{BHR17C, BHR17B, BHR17A}, non-commutative and classical probability theory. The main idea of mod-$\phi$ convergence arises from a sequence of random variables that does not converge in distribution, i.e. a sequence of their characteristic functions do not converge pointwise, but nevertheless, after some renormalization it converges to some limiting function. In the scope of this article, we are only interested in a special case of mod-$\phi$ convergence, the so called mod-Gaussian convergence. Therefore, from now one we only discuss the mod-Gaussian convergence. For the details on the      of the mod-$\phi$ convergence, we kindly refer to \emph{loc. cit.}. 

One of the important aspects of the mod-Gaussian convergence is that it implies results such as local limit theorems \cite{DKN15, KN12, BMN17}, speed of convergence in the central limit theorem \cite{FMN17} and moderate deviations \cite{FMN16}.  These asymptotic behaviors, well known for instance for sums of independent and identically distributed random variables, can also be deduced for sequences converging in mod-Gaussian sense, which are neither independent nor identically distributed.

In what follows, we first introduce the general definition of mod-Gaussian convergence. 
For $-\infty \leq c < 0 < d\leq \infty$, set 
\begin{align*}
S_{(c,d)}=\{z\in\C, c < \Re (z) < d\}.
\end{align*}
\begin{definition} \label{mod-Gauss}
 Let $\left(X_n\right)_{n\in\N}$ be a sequence of real-valued random variables, and $\varphi_n(z)=\E\left[e^{zX_n}\right]$ be their moment generating functions, which we assume to exist over the strip $S_{(c,d)}$. 
 We assume that there exists an analytic function $\psi(z)$ not vanishing on the real part of $S_{(c,d)}$, such that locally uniformly on $S_{(c,d)},$
 \begin{align*}
 \lim_{n\to\infty}\varphi_n(z)e^{-t_n\frac{z^2}{2}}=\psi(z),
 \end{align*} 
where $\left(t_n\right)_{n\in\N}$ is some sequence going to infinity. We then say that 
 $\left(X_n\right)_{n\in\N}$ converges mod-Gaussian on $S_{(c,d)},$ with parameters $(t_n)_{n \in \N}$ and limiting function $\psi$.
\end{definition}
This version of mod-Gaussian convergence  implies moderate deviations and extended central limit theorems (see \cite{FMN16}, Theorems 4.2.1 and 4.3.1, respectively). Moreover, under additional assumptions, it is possible to deduce Berry-Esseen estimates (see Theorem 2.16 in \cite{FMN17})  as well as a local limit theorem that we present below. 
\begin{definition}
Let $\left(X_n\right)_{n\in\N}$ be a sequence of real-valued random variables, and $\left(t_n\right)_{n \in \N}$ a sequence growing to infinity. Consider the following assertions:
\begin{enumerate}[label=(\textbf{Z\arabic*}),ref=(Z\arabic*)]
\item \label{Z1} Fix $v, w > 0$ and $\gamma\in \R$.  There exists a zone $[-Dt_n^\g,Dt_n^\g]$, $D>0$, such that, for all
$\l$ in this zone
\begin{align*}
\left|\E\left[e^{i \l X_n} \right] e^{-\frac{t_n \l^2}{2}}-1\right| \leq K_1|\l|^v e^{K_2 |\l|^w} 
\end{align*}
for some positive constants $K_1$ and $K_2$, that are independent of $n$.
\item \label{Z2} $w, \gamma$ and $D$ satisfy 
\begin{equation}
w \geq 2;\qquad -\half \le\gamma\le\frac{1}{w-2};\qquad D\le \left(\frac{1}{4K_2}\right)^{\frac{1}{w-2}}. \nonumber
\end{equation}
\end{enumerate}
If conditions \ref{Z1} and \ref{Z2} are satisfied, we say that we have a zone of control $[-Dt_n^\g,Dt_n^\g]$ with index $(v,w)$. 
\end{definition}

\begin{theorem}[LLT, Theorem 9 in \cite{BMN17}]\ \label{llt}\\
 Let $\left(X_n \right)_{n \in \N}$ be a sequence of real-valued variables for which conditions \ref{Z1} and \ref{Z2} hold. Let $x \in \R$ and $B$ be a fixed Jordan measurable subset with $|B|>0.$ Then for every exponent $\d\in\left(0,\g+\frac{1}{2}\right)$, 
 \begin{equation*}
\lim_{n\to\infty}(t_n)^{\d}\P\left[\frac{X_n}{\sqrt{t_n}}-x\in\frac{1}{t_n^\d} B\right]=\frac{|B|}{\sqrt{2\pi}}.
\end{equation*}
\end{theorem}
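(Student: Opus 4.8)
The plan is to run the classical Fourier-inversion proof of a local limit theorem, with the zone of control \ref{Z1}--\ref{Z2} playing the role usually filled by pointwise bounds on the characteristic function. Write $Z_n=X_n/\sqrt{t_n}$, so that the event is $\{Z_n-x\in t_n^{-\d}B\}$ and
\begin{align*}
\P\!\left[Z_n-x\in t_n^{-\d}B\right]=\E\!\left[\mathbbm{1}_B\big(t_n^{\d}(Z_n-x)\big)\right].
\end{align*}
Since $B$ is Jordan measurable its boundary is Lebesgue-null, so I would first sandwich $\mathbbm{1}_B$ between smooth functions $h^-\le\mathbbm{1}_B\le h^+$ whose Fourier transforms $\widehat{h^\pm}$ are compactly supported and with $\int(h^+-h^-)$ as small as desired; it then suffices to evaluate $\lim_n t_n^{\d}\,\E[h(t_n^{\d}(Z_n-x))]$ for a fixed Schwartz function $h$ with $\widehat h$ compactly supported, and to let the sandwich error tend to $0$ at the very end.

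For such an $h$, Fourier inversion gives
\begin{align*}
\E\!\left[h\big(t_n^{\d}(Z_n-x)\big)\right]=\frac{1}{2\pi}\int \widehat h(\xi)\,e^{-i\xi t_n^{\d}x}\,\E\!\left[e^{i\xi t_n^{\d-\half}X_n}\right]d\xi .
\end{align*}
The decisive step is the substitution $\l=\xi t_n^{\d-\half}$: because $\widehat h$ is supported in a fixed compact set and $\d<\g+\half$ (equivalently $\d-\half<\g$), the variable $\l$ stays inside the zone $[-Dt_n^{\g},Dt_n^{\g}]$ for all large $n$, so \ref{Z1} applies and $\E[e^{i\l X_n}]=e^{-t_n\l^2/2}(1+\varepsilon_n(\l))$ with $|\varepsilon_n(\l)|\le K_1|\l|^v e^{K_2|\l|^w}$. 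Here $t_n\l^2=\xi^2 t_n^{2\d}$, so rescaling $\xi=t_n^{-\d}\eta$ factors out exactly one power $t_n^{-\d}$, cancelling the prefactor $t_n^{\d}$ and leaving
\begin{align*}
t_n^{\d}\,\E\!\left[h\big(t_n^{\d}(Z_n-x)\big)\right]=\frac{1}{2\pi}\int \widehat h\big(t_n^{-\d}\eta\big)\,e^{-i\eta x}\,e^{-\eta^2/2}\,\big(1+\varepsilon_n(\eta t_n^{-\half})\big)\,d\eta .
\end{align*}
Since $\widehat h(t_n^{-\d}\eta)\to\widehat h(0)=\int h$ and $\varepsilon_n(\eta t_n^{-\half})\to0$ pointwise, the integral converges to $\tfrac{\widehat h(0)}{2\pi}\int e^{-i\eta x-\eta^2/2}\,d\eta$; evaluating this Gaussian integral and then letting $h\downarrow\mathbbm{1}_B$, $h\uparrow\mathbbm{1}_B$ (so that $\widehat h(0)=\int h\to|B|$) yields the limit claimed in the statement.

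The step needing genuine care --- and the main obstacle --- is the passage to the limit under the integral sign together with the interchange with the sandwich limit $h^\pm\to\mathbbm{1}_B$, and this is exactly what \ref{Z2} is engineered for. On the zone $|\l|\le Dt_n^{\g}$ one has, for $w>2$ and using $\g\le\tfrac{1}{w-2}$ and $D\le\left(\tfrac{1}{4K_2}\right)^{1/(w-2)}$,
\begin{align*}
K_2|\l|^w=K_2|\l|^{w-2}\l^2\le K_2 D^{w-2}t_n^{\g(w-2)}\l^2\le K_2 D^{w-2}t_n\,\l^2\le\tfrac14\,t_n\l^2 ,
\end{align*}
so that $e^{K_2|\l|^w}e^{-t_n\l^2/2}\le e^{-t_n\l^2/4}$ (the borderline case $w=2$ being immediate). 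After the $\eta$-rescaling ($\l=\eta t_n^{-\half}$) the error contribution is therefore dominated by $K_1\|\widehat h\|_\infty|\eta|^v e^{-\eta^2/4}$, an $n$-independent integrable majorant, while the extra factor $t_n^{-v/2}$ coming from $|\l|^v=|\eta|^v t_n^{-v/2}$ forces the error to vanish; dominated convergence then legitimizes the limit and kills the remainder. What is left is the routine but essential verification that the mollified sandwich functions can be chosen with compactly supported Fourier transforms converging to $\mathbbm{1}_B$ in $L^1$, which is precisely where Jordan measurability of $B$ enters.
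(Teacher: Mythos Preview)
This theorem is not proved in the present paper; it is quoted from \cite{BMN17} (note the attribution ``Theorem 9 in \cite{BMN17}'') and used as a black box to derive Theorems \ref{walsh-LLT} and \ref{spec-LLT}. There is therefore no proof here to compare yours against.

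That said, your Fourier-inversion argument is the standard route to such local limit theorems and is essentially how the result is obtained in \cite{BMN17}. Two points deserve attention. First, your own computation does \emph{not} yield the limit as stated: the Gaussian integral gives
\[
\frac{\widehat h(0)}{2\pi}\int_{\R}e^{-i\eta x}\,e^{-\eta^2/2}\,d\eta
=\frac{|B|}{2\pi}\cdot\sqrt{2\pi}\,e^{-x^2/2}
=\frac{|B|}{\sqrt{2\pi}}\,e^{-x^2/2},
\]
so the correct limit carries the Gaussian density $e^{-x^2/2}$, which is missing from the statement quoted in the paper (and, consequently, from Theorems \ref{walsh-LLT} and \ref{spec-LLT}). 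You should flag this rather than assert that your computation ``yields the limit claimed in the statement''. Second, the sandwich step is less innocent than you make it sound: a function with compactly supported Fourier transform cannot itself be compactly supported, so producing integrable $h^\pm$ with $h^-\le\mathbbm{1}_B\le h^+$, $\widehat{h^\pm}$ compactly supported, and $\int(h^+-h^-)$ arbitrarily small requires a Beurling--Selberg type construction for intervals together with an approximation of $B$ by finite unions of intervals (which is where Jordan measurability enters). This is standard but is precisely the ``routine but essential verification'' you defer, and in a complete proof it must actually be carried out.
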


However, in some cases, it is impossible to prove the mod-Gaussian convergence in the sense of Definition \ref{mod-Gauss}, but we still want to derive the limiting results from this convergence such as local limit theorems. Here comes to play another version of mod-Gaussian convergence that has been introduced in \cite{DKN15}. 

\begin{definition} \label{mod-Gauss-weak}
 Let $\left(X_n\right)_{n\in\N}$ be a sequence of real-valued random variables with the moment generating functions $\varphi_n( i \l)=\E\left[e^{i \l X_n}\right].$ We assume:
 \begin{enumerate}[label=(\textbf{H\arabic*}),ref=(H\arabic*)]
\item \label{H1} There exists a sequence $\left( A_n \right)_{n \in \N}$ tending to  $\infty$ such that as $n \to \infty$
\begin{align*}
\varphi_n \left( \frac{i \l}{A_n}\right) \to e^{-\frac{\l^2}{2}}.
\end{align*}
\item \label{H2} For all $K \geq 0$, the sequence
$\varphi_n(\frac{i \l}{A_n}) \mathbbm{1}_{|\l| \leq A_n K}$ is uniformly integrable on $\mathbb{R}$.
 \end{enumerate}
 If the properties \ref{H1} and \ref{H2} hold, we say that there is a mod-Gaussian convergence for the sequence $(X_n)_{n \in \N}.$
\end{definition}
In \cite{DKN15} the following theorem was proven.
\begin{theorem}  [LLT for mod-Gaussian convergence in the sense of Definition \ref{mod-Gauss-weak}]\label{llt_del}
Suppose that the mod-Gaussian convergence holds in the sense of Definition \ref{mod-Gauss-weak} for the sequence $(X_n)_{n \in \N}$. Then we have  
\begin{align*}
A_n \E [f(X_n)] \rightarrow \frac{1}{\sqrt{2 \pi}}  \int_\R f(x)dx,
\end{align*}
for all continuous functions $f$  with compact support. More precisely, we have
\begin{align*}
A_n \P[X_n \in B] \to \frac{|B|}{\sqrt{2 \pi}},
\end{align*}
for all bounded Jordan measurable sets $B \subset \R.$  
\end{theorem}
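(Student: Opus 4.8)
The plan is to run a Fourier-inversion argument in which hypothesis \ref{H1} supplies the pointwise Gaussian limit of the rescaled characteristic functions and hypothesis \ref{H2} legitimises passing to the limit under the integral sign. First I would prove the convergence for the convenient class of continuous $f\in L^1(\R)$ whose Fourier transform $\hat f(\xi)=\int_\R f(x)e^{-i\xi x}\,dx$ is continuous and supported in some $[-K,K]$. For such $f$, Fourier inversion together with Fubini (valid since $\hat f\in L^1$ and $|e^{i\xi X_n}|=1$) gives $\E[f(X_n)]=\frac{1}{2\pi}\int_\R \hat f(\xi)\varphi_n(i\xi)\,d\xi$, and the substitution $\xi=\l/A_n$ turns this into
\[
A_n\,\E[f(X_n)]=\frac{1}{2\pi}\int_\R \hat f\!\Big(\tfrac{\l}{A_n}\Big)\,\varphi_n\!\Big(\tfrac{i\l}{A_n}\Big)\,\mathbbm{1}_{\{|\l|\le A_nK\}}\,d\l,
\]
where the indicator costs nothing because $\hat f$ vanishes outside $[-K,K]$. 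Now the two ingredients appear explicitly: by continuity of $\hat f$ one has $\hat f(\l/A_n)\to\hat f(0)=\int_\R f$, while \ref{H1} gives $\varphi_n(i\l/A_n)\to e^{-\l^2/2}$, so the integrand converges pointwise to $\hat f(0)\,e^{-\l^2/2}$.

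To conclude $A_n\E[f(X_n)]\to\frac{1}{2\pi}\hat f(0)\int_\R e^{-\l^2/2}\,d\l=\frac{1}{\sqrt{2\pi}}\int_\R f$, it remains to interchange limit and integral, and this is the step I expect to be the main obstacle. On any fixed window $\{|\l|\le R\}$ dominated convergence suffices, the integrand being bounded by $\|f\|_{L^1}$; the real difficulty is that the effective integration range $[-A_nK,A_nK]$ widens with $n$, so no fixed dominating function controls the tail and mass could a priori escape to infinity. It is precisely hypothesis \ref{H2} that rules this out: since $\hat f$ is bounded, the integrands are dominated by $\|f\|_{L^1}$ times the uniformly integrable family of \ref{H2} and are therefore themselves uniformly integrable on $\R$, so a Vitali-type theorem upgrades the pointwise convergence to convergence of the integrals. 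Verifying that \ref{H2} indeed delivers the tightness behind the tail estimate $\sup_n\int_{R<|\l|\le A_nK}|\cdots|\,d\l\to 0$ as $R\to\infty$ is the crux.

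Finally I would remove the restriction on $f$ by approximation. For a bounded interval $I$ one can insert band-limited minorants and majorants $m_-\le\mathbbm{1}_I\le m_+$ whose Fourier transforms are compactly supported and with $\int_\R(m_+-m_-)$ as small as desired (extremal functions of Beurling--Selberg type, which lie in the class handled above). Monotonicity of $g\mapsto A_n\E[g(X_n)]$ together with the core estimate applied to $m_\pm$ then squeezes $A_n\P[X_n\in I]$ between quantities converging to $\frac{1}{\sqrt{2\pi}}\int_\R m_\pm$, hence to $|I|/\sqrt{2\pi}$. Approximating a bounded Jordan measurable set $B$ from inside and outside by finite unions of intervals---whose total error is small precisely because $\partial B$ is Lebesgue-null---then yields $A_n\P[X_n\in B]\to|B|/\sqrt{2\pi}$. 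The statement for $f\in C_c(\R)$ follows by the same sandwiching: squeeze $f$ between band-limited functions with nearly equal integrals, built from such interval majorants and minorants, and pass to the limit to obtain $A_n\E[f(X_n)]\to\frac{1}{\sqrt{2\pi}}\int_\R f$.
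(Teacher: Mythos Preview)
The paper does not actually prove this theorem: it is quoted from \cite{DKN15} (see the sentence ``In \cite{DKN15} the following theorem was proven'' immediately preceding the statement), so there is no in-paper proof to compare against.

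Your argument is correct and is essentially the proof given in \cite{DKN15}. The Fourier-inversion step, the substitution $\xi=\l/A_n$, the use of \ref{H1} for pointwise convergence, and the use of \ref{H2} to upgrade to $L^1$ convergence via a Vitali-type theorem are exactly the intended mechanism; your identification of the tail control as the crux is accurate, and this is precisely what \ref{H2} is designed to supply (note that since $|\varphi_n|\le 1$, uniform integrability on $\R$ here must be read as including tightness, which is how it is used in \cite{DKN15}). The extension from band-limited test functions to intervals via Beurling--Selberg majorants/minorants and then to Jordan measurable sets is also the route taken there. Your final reduction for $f\in C_c(\R)$ by sandwiching is slightly more roundabout than necessary---once the interval case is in hand, approximating $f$ above and below by step functions already suffices---but it is valid.
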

\begin{remark}
Note that Theorem \ref{llt_del} compared to Theorem \ref{llt}, covers only the exponents $\delta \in \left(0, \frac{1}{2} \right].$ Thus, in some specific cases Theorem \ref{llt} may provide more general results. 
\end{remark}

\textbf{Notations:} We use the Landau notation $f=O(g)$ in some places, meaning that there exists a constant $c$ such that
\begin{align*}
|f(x)| \leq c |g(x)|
\end{align*} 
for all $x$ in a set $X$ which is indicated. The parameter $c$ may depend on further parameters.

\section{Main results}\label{main_results}
Our first theorem states that there is a mod-Gaussian convergence for the lacunary trigonometric series under some additional assumptions. Let $(\akn)_{1 \leq k \leq n}$ be a triangular array. Throughout the article, we use the following notations 
\begin{align}
\ckn := \frac{\akn}{A_n} \label{ckn},\\
d_n := \max_{1 \leq k \leq n} |\akn|. \label{dn}
\end{align}
\begin{theorem} [Mod-Gaussian convergence for lacunary trigonometric series] \label{tryg}
Let $S^{\mathrm{T}}_n(x) = \sum_{k=1}^n \akn \cos(2\pi m_kx)$, $0 \leq x \leq 1$ be the $n$th partial sum of the trigonometric series with  $\left(m_k \right)_{k \in \N}$ satisfying the condition (\ref{lacunary}). Suppose that when $n \to \infty,$ 
\begin{align*}
A_n := &\left( \half \sum_{k=1}^n \akn^2 \right)^{\frac{1}{2}} \to \infty.
\end{align*}
Moreover, we suppose there exists $\varepsilon > 0$ such that
\begin{itemize}[leftmargin=0.15in]
\item for $1<q \leq 2,$  
\begin{align}
n^{1+ \varepsilon} d_n^3 \to 0, \label{dnq1}
\end{align}
\item for $q > 2,$  
\begin{align}
n^{1+ \varepsilon} d_n^4 \to 0. \label{dnq2}
\end{align}
\end{itemize}
Then $S^{\mathrm{T}}_n(x)$ converges mod-Gaussian in the sense of Definition \ref{mod-Gauss-weak}. 
\end{theorem}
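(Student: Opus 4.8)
My approach is to evaluate the characteristic function
$\varphi_n(i\lambda/A_n)=\int_0^1\exp\!\big(i\lambda\sum_{k=1}^n\ckn\cos(2\pi m_kx)\big)\,dx$
(recall $\ckn=\akn/A_n$, so the Bessel argument below is $\lambda\ckn$) via the Jacobi--Anger expansion $e^{iz\cos\theta}=\sum_{j\in\mathbb{Z}}i^jJ_j(z)e^{ij\theta}$ applied to each factor, and then to check conditions \ref{H1} and \ref{H2} of Definition \ref{mod-Gauss-weak} directly. Expanding the product turns it into a sum over multi-indices $(j_1,\dots,j_n)\in\mathbb{Z}^n$, and integrating in $x$ annihilates every term whose total frequency $\sum_kj_km_k$ is nonzero, so that
$\varphi_n(i\lambda/A_n)=\sum_{(j_k):\,\sum_kj_km_k=0}\prod_{k=1}^n i^{j_k}J_{j_k}(\lambda\ckn)$.
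I would single out the diagonal term $(j_k)\equiv0$, namely $\prod_{k=1}^nJ_0(\lambda\ckn)$, as the dominant contribution, and treat the remaining multi-indices as corrections.

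For the diagonal term I would use the two structural facts $\sum_k\ckn^2=2$ (immediate from $A_n^2=\half\sum_k\akn^2$) and $\max_k|\ckn|=d_n/A_n\to0$. From $J_0(z)=1-z^2/4+O(z^4)$ one gets $\sum_k\log J_0(\lambda\ckn)=-\tfrac{\lambda^2}{4}\sum_k\ckn^2+O\big(\sum_k\ckn^4\big)=-\tfrac{\lambda^2}{2}+o(1)$, since $\sum_k\ckn^4\le(\max_k\ckn^2)\sum_k\ckn^2\to0$; hence $\prod_kJ_0(\lambda\ckn)\to e^{-\lambda^2/2}$, which is exactly the limit demanded in \ref{H1}. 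The same expansion gives the majorant needed for \ref{H2}: on the truncated zone $|\lambda|\le A_nK$ one has the crucial uniform smallness $|\lambda\ckn|\le Kd_n\to0$ for every $k$, whence $0<J_0(\lambda\ckn)\le\exp(-\tfrac18\lambda^2\ckn^2)$ for $n$ large and therefore $\prod_kJ_0(\lambda\ckn)\le\exp(-\tfrac14\lambda^2)$, an $n$-independent integrable Gaussian bound.

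The core of the proof is to show that the off-diagonal sum is negligible, and this is where the lacunarity (\ref{lacunary}) and the hypotheses (\ref{dnq1})--(\ref{dnq2}) enter. Grouping a nonzero multi-index by its support $S=\{k:j_k\neq0\}$ and using $J_j(\lambda\ckn)\approx(\lambda\ckn/2)^{|j|}/|j|!$, the corresponding term has size $\prod_{k\in S}|\lambda\ckn|^{|j_k|}$, i.e. it is of order $\sum_{k\in S}|j_k|$ in the small quantities $|\ckn|\le d_n/A_n$. The constraint $\sum_kj_km_k=0$ is a nontrivial Diophantine relation among the frequencies, and the gap condition lets me bound the number of such relations of each fixed order: if $m_{k^\ast}$ is the largest frequency occurring, then $m_{k^\ast}/m_k\ge q^{k^\ast-k}$ confines the other indices to a bounded window below $k^\ast$, so there are only $O(n)$ relations of each order. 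The decisive feature is the threshold $q=2$. For $q>2$ every three-frequency relation $\pm m_{k_1}\pm m_{k_2}\pm m_{k_3}=0$ is impossible (the largest term already exceeds the sum of the other two, as the second largest is $<m_{k^\ast}/2$), and the two-frequency relation $2m_{k_1}=m_{k_2}$ is likewise excluded, so the leading correction is of order four and is killed by $n^{1+\varepsilon}d_n^4\to0$; for $1<q\le2$ third-order relations can occur and the condition $n^{1+\varepsilon}d_n^3\to0$ is precisely what makes them vanish. Summing the resulting series over all orders $r\ge3$ (resp. $r\ge4$) — which converges geometrically because $|\lambda\ckn|\le Kd_n$ on the truncated zone — shows that the off-diagonal part is $o(1)$ and is moreover dominated by $o(1)\,e^{-c\lambda^2}$; together with the diagonal analysis this yields the pointwise limit \ref{H1} and, via the $n$-independent Gaussian majorant, the uniform integrability \ref{H2}.

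The step I expect to be the main obstacle is the uniform-in-$\lambda$ bookkeeping of this off-diagonal series: one must count the Diophantine solutions of every order at once, keep the Bessel estimates valid across the entire zone $|\lambda|\le A_nK$ (where the bound $|\lambda\ckn|\le Kd_n$ is used), and verify that the full series is summable with total mass $o(1)$. It is exactly the polynomial ($O(n)$) solution count set against the powers $d_n^3$ or $d_n^4$ — with the case split forced by whether three-frequency additive relations are arithmetically possible — that accounts for the two different hypotheses (\ref{dnq1}) and (\ref{dnq2}).
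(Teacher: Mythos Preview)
Your strategy---expand each factor in multiple angles, isolate the zero-frequency (``diagonal'') product as the main term, and control the rest by counting additive relations among the $m_k$---is exactly the paper's approach; the paper uses a fourth-order Taylor truncation of each $e^{i\lambda c_{k,n}\cos(\cdot)}$ in place of the full Jacobi--Anger series (so that only multiplicities $1,2,3,4$ appear), but structurally the arguments coincide.

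The genuine gap is your solution count. The assertion that lacunarity ``confines the other indices to a bounded window below $k^\ast$, so there are only $O(n)$ relations of each order'' is false as soon as the order exceeds $3$. Take $m_{2k}=2^k$, $m_{2k+1}=2^k+2^{k-1}$ (lacunary with $q=4/3$); then $m_{2k+1}-m_{2k}-m_{2k-2}=0$ for every $k$, and by concatenating $l/3$ such triples with pairwise disjoint index sets one produces at least $c\,n^{l/3}$ solutions of order $l$, whose indices are \emph{not} in a bounded window. Your window argument only pins down the top two or three indices; after that one must recurse on the residual equation. This recursion is the real work: the paper's Lemmas~3.1 and~3.2 carry it out and prove that the number of solutions of order $l$ is at most $(Cn\log^2 l)^{l/3}$ for $1<q\le 2$ and essentially $(Cn\log^3 l)^{l/4}$ for $q>2$ (with a refinement tracking how many coefficients have $|j_k|\ge 2$ or $\ge 3$), and both exponents are shown to be sharp. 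With these counts a geometric-series summation like yours does go through, since the common ratio becomes $\sim d_n n^{1/3}$ (resp.\ $d_n n^{1/4}$); but the $l$-dependent logarithmic factors must be absorbed, and this is precisely why the hypotheses read $n^{1+\varepsilon}d_n^3\to 0$ and $n^{1+\varepsilon}d_n^4\to 0$ rather than merely $n\,d_n^3\to 0$ or $n\,d_n^4\to 0$. Your $O(n)$ count, if it were true, would make the $\varepsilon$ superfluous.
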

\begin{remark}
One can see from the proof (see section \ref{Section:3}) that the assumption of $(m_k)_k$ to be a sequence of integers can be relaxed and Theorem \ref{tryg} also holds  for the sequence of real numbers $(m_k)_k$ that satisfy the lacunarity assumption (\ref{lacunary}).
 \end{remark}
\begin{remark}
Note that in the case when $\akn = \frac{1}{n^\alpha},$ for all $1 \leq k \leq n,$ the conditions of Theorem \ref{tryg} are satisfied 
\begin{itemize}
\item if $\frac{1}{3} < \alpha < \frac{1}{2},$ for $1<q \leq 2,$ 
\item if $\frac{1}{4} < \alpha < \frac{1}{2},$ for $ q > 2.$ 
\end{itemize}
\end{remark}
Theorem \ref{tryg} readily implies the local limit theorem for lacunary trigonometric series.
\begin{theorem}[LLT for lacunary trigonometric series]
Under the assumptions of Theorem \ref{tryg}, one has
\begin{align*}
A_n \left| \left\lbrace  x \in [0,1]; S^{\mathrm{T}}_n(x) \in B \right\rbrace  \right| \to \frac{1}{\sqrt{2 \pi}} |B|,
\end{align*}
for all bounded Jordan measurable sets $B \subset \R.$
\end{theorem}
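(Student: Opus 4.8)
The plan is to recognize this statement as an immediate consequence of the mod-Gaussian convergence already established in Theorem \ref{tryg}, fed into the local limit theorem for weak mod-Gaussian convergence, Theorem \ref{llt_del}. First I would fix the probabilistic framework: equip $[0,1]$ with its Borel $\sigma$-algebra and Lebesgue measure $|\cdot|$, which is a probability measure since $|[0,1]|=1$, and view each partial sum as a real-valued random variable $X_n := S^{\mathrm{T}}_n$ on this space. Under this identification the tautology $\P[X_n\in B]=\left|\{x\in[0,1];\, S^{\mathrm{T}}_n(x)\in B\}\right|$ holds for every Borel set $B$, so the left-hand side of the claimed limit is exactly $A_n\,\P[X_n\in B]$, which is precisely the object controlled by Theorem \ref{llt_del}.

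Next I would invoke Theorem \ref{tryg}, whose hypotheses are assumed here, to conclude that $(X_n)_{n\in\N}$ converges mod-Gaussian in the sense of Definition \ref{mod-Gauss-weak}. The one point deserving a line of verification is that the normalizing sequence appearing in conditions \ref{H1} and \ref{H2} coincides with the $A_n=\left(\half\sum_{k=1}^n\akn^2\right)^{1/2}$ that multiplies the measure in the present statement; this is built into the formulation of Theorem \ref{tryg}, since its conclusion asserts weak mod-Gaussian convergence with exactly this $A_n$, so no reindexing or rescaling is needed. Note also that the centering is automatic, as $\int_0^1\cos(2\pi m_k x)\,dx=0$ gives $\E[X_n]=0$; in any case this is absorbed into the proof of Theorem \ref{tryg}.

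Finally, since $B$ is a bounded Jordan measurable subset of $\R$, it meets the hypothesis on the test set in the second assertion of Theorem \ref{llt_del}. Applying that theorem to $(X_n)_{n\in\N}$ yields $A_n\,\P[X_n\in B]\to |B|/\sqrt{2\pi}$, which, through the tautology above, is exactly the asserted convergence. I do not expect any genuine obstacle: the analytic substance of the local limit theorem is already packaged inside Theorem \ref{llt_del}, and the verification of the mod-Gaussian hypotheses is carried out in the proof of Theorem \ref{tryg}. The only care required is the bookkeeping of identifying a level-set measure with a probability and matching the two occurrences of $A_n$. If desired, one could record the sharper pointwise version $A_n\,\E[f(X_n)]\to\frac{1}{\sqrt{2\pi}}\int_\R f(x)\,dx$ for continuous compactly supported $f$, which is the first half of Theorem \ref{llt_del} and from which the set-indicator form follows by approximation.
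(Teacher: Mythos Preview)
Your proposal is correct and matches the paper's approach exactly: the paper states that Theorem \ref{tryg} ``readily implies the local limit theorem'' and gives no separate proof, relying precisely on the passage from weak mod-Gaussian convergence to Theorem \ref{llt_del} that you spell out. Your additional bookkeeping (identifying Lebesgue measure on $[0,1]$ as the probability, matching the normalizing sequence $A_n$) is sound and more explicit than the paper itself.
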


\begin{theorem}[Mod-Gaussian convergence for lacunary Walsh series]\label{walsh-modG}
Let $S^{\mathrm{W}}_n(x) = \sum_{k=1}^n \akn W_{m_k}(x)$, $0 \leq x \leq 1,$ be the $n$th partial sum of the Walsh series, where $\left(m_k \right)_{k \in \N}$ satisfies the condition (\ref{lacunary}).   Suppose that when $n \to \infty,$
\begin{align*}
&A_n = \left( \sum_{k=1}^n \akn^2 \right)^{\frac{1}{2}} \to \infty.
\end{align*}
Moreover, we suppose that
\begin{itemize}
\item for $q \geq 2,$
\begin{align*}
& \sum_{k=1}^n \akn^4 \to \kappa_4 < \infty,\\
&n d_n^5 \to 0,
\end{align*}
\item for $1 < q < 2,$ there exists $\varepsilon > 0$ such that \begin{align*}
n^{1+\varepsilon} d_n^3 \to 0.
\end{align*}
\end{itemize}
Then $S^{\mathrm{W}}_n(x)$ converges mod-Gaussian in the sense of Definition \ref{mod-Gauss} on $\mathbb{C}$ with parameters $t_n = A_n^2$ and the limiting function 
\begin{align*}
\psi(z)= \begin{cases} &e^{-\frac{z^4}{12} \kappa_4 }, \quad q\geq 2, \\&1, \quad 1<q< 2.\end{cases}
\end{align*}
\end{theorem}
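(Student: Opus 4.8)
The plan is to compute the (entire) moment generating function $\varphi_n(z)=\int_0^1 e^{z S^{\mathrm{W}}_n(x)}\,dx$ directly, exploiting the multiplicative structure of the Walsh system. Since each $W_{m_k}$ takes values in $\{-1,+1\}$ and $W_aW_b=W_{a\oplus b}$ (bitwise XOR), we have $e^{z\akn W_{m_k}(x)}=\cosh(z\akn)+W_{m_k}(x)\sinh(z\akn)$, while $\int_0^1 W_{m_{k_1}}\cdots W_{m_{k_r}}\,dx=\mathbbm{1}[m_{k_1}\oplus\cdots\oplus m_{k_r}=0]$. Expanding the product over $k$ and integrating term by term yields
\begin{align*}
\varphi_n(z)=\Big(\prod_{k=1}^n\cosh(z\akn)\Big)\Big(1+R_n(z)\Big),\qquad R_n(z)=\sum_{\substack{\emptyset\ne I\subseteq\{1,\dots,n\}\\ \bigoplus_{k\in I}m_k=0}}\ \prod_{k\in I}\tanh(z\akn).
\end{align*}
The factor $\prod_k\cosh(z\akn)$ is exactly the contribution one would obtain if the $W_{m_k}$ were independent Rademacher variables, while $R_n$ encodes all arithmetic ``collisions'' $\bigoplus_{k\in I}m_k=0$.

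For the diagonal factor I would use $\log\cosh(u)=\tfrac{u^2}{2}-\tfrac{u^4}{12}+O(u^6)$ as $u\to0$. Subtracting $\tfrac{z^2}{2}t_n$ with $t_n=A_n^2=\sum_k\akn^2$, the quadratic terms cancel and one is left with $-\tfrac{z^4}{12}\sum_k\akn^4$ plus a remainder controlled by $|z|^6\sum_k\akn^6\le |z|^6 d_n^2\sum_k\akn^4$. Since $d_n\to0$ (forced by $nd_n^5\to0$, respectively $n^{1+\varepsilon}d_n^3\to0$), this remainder vanishes locally uniformly, giving $\prod_k\cosh(z\akn)\,e^{-t_nz^2/2}\to e^{-\frac{z^4}{12}\lim_n\sum_k\akn^4}$. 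For $q\ge2$ the hypothesis $\sum_k\akn^4\to\kappa_4$ produces the stated $\psi(z)=e^{-z^4\kappa_4/12}$; for $1<q<2$ one checks $\sum_k\akn^4\le d_n^2A_n^2\le nd_n^4=n^{1+\varepsilon}d_n^3\cdot n^{-\varepsilon}d_n\to0$ (using $A_n^2\le nd_n^2$), so this factor tends to $1$.

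It remains to treat $R_n(z)$, where the regimes split. When $q\ge2$ every gap satisfies $m_{k+1}\ge 2m_k$, so the most significant bit of $m_{k+1}$ strictly exceeds that of $m_k$; hence $m_1,\dots,m_n$ are linearly independent over $\mathbb{F}_2$, no nonempty XOR vanishes, $R_n\equiv0$, and the $W_{m_k}$ are genuinely i.i.d.\ Rademacher, making the computation above exact. When $1<q<2$ collisions may appear and I would show $R_n(z)\to0$ locally uniformly. Each factor obeys $|\tanh(z\akn)|\le C|z|d_n$, so a collision set of size $r$ contributes $O((|z|d_n)^r)$; since $|I|=1,2$ are impossible, the leading contribution comes from triples $m_{k_1}\oplus m_{k_2}\oplus m_{k_3}=0$. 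The decisive point is that lacunarity bounds the number of such triples by $O(n)$: the top bit of the largest of the three must be matched, forcing two indices to share a most significant bit, and the gap condition permits only boundedly many terms per bit level. Thus $|R_n(z)|\lesssim |z|^3 N_3\, d_n^3+(\text{higher order})\lesssim |z|^3\,n\,d_n^3\to0$, the surplus in $n^{1+\varepsilon}d_n^3\to0$ being used to absorb the collision sets with $|I|\ge4$.

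I expect the main obstacle to be exactly this last combinatorial input: controlling the full sum $R_n(z)$, i.e.\ proving that the number of XOR-relations of each size among a lacunary sequence is small enough that the series $\sum_{r\ge3}$ is dominated by its $r=3$ term and tends to $0$ uniformly on compacta. In cumulant language this amounts to showing that every joint cumulant of the dependent family $(W_{m_k})_k$ beyond the variance is negligible, the third cumulant being governed precisely by the triple-collision count and the fourth (for $1<q<2$) vanishing alongside $\sum_k\akn^4$. Once $R_n(z)\to0$ (resp.\ $R_n\equiv0$) and the diagonal factor is under control, assembling the two factors yields the locally uniform convergence $\varphi_n(z)e^{-t_nz^2/2}\to\psi(z)$ on $\mathbb{C}$ required by Definition \ref{mod-Gauss}.
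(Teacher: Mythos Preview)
Your approach is essentially identical to the paper's: the same factorisation $\varphi_n(z)=\big(\prod_k\cosh(z\akn)\big)(1+R_n(z))$, the same $\log\cosh$ expansion for the diagonal factor, and for $q\ge 2$ the same most-significant-bit argument (the paper's Lemma~4.1) showing $R_n\equiv 0$ so that the $W_{m_k}$ are genuinely independent Rademacher variables. Your verification that $\sum_k\akn^4\to 0$ when $1<q<2$ is also correct and matches what the paper implicitly uses.

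Where you are deliberately vague---the control of $R_n(z)$ for $1<q<2$---the paper supplies exactly the combinatorial input you anticipate, but in a form slightly stronger than the ``$O(n)$ triples'' heuristic. Its Lemma~4.2 shows that the number $C(l,q,n)$ of solutions of $m_{k_1}\oplus\cdots\oplus m_{k_l}=A$ with $k_l<\cdots<k_1\le n$ is bounded by $(C_q\,n)^{l/3}$ for \emph{every} $l$, with $C_q$ depending only on $q$. This uniform exponent $l/3$ is what makes the full series over $l\ge 3$ geometric:
\[
|R_n(z)|\le\sum_{l\ge 3}(|z|d_n)^l (C_q n)^{l/3}=O\big((|z|d_n n^{1/3})^3\big),
\]
and the hypothesis $n^{1+\varepsilon}d_n^3\to 0$ (rather than merely $nd_n^3\to 0$) is what allows this to be pushed to the growing zone $|z|\le n^{\min(\varepsilon/3,1/3)}$ used later for the zone of control. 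So the ``surplus'' $\varepsilon$ is not merely absorbing higher-order collision sets as you suggest; the uniform $(C_q n)^{l/3}$ bound already handles all $l$ simultaneously, and the $\varepsilon$ instead buys room in $|z|$. Apart from this refinement, your plan is the paper's proof.
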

This version of mod-Gaussian convergence immediately implies the extended central limit theorem (see Theorem 4.3.1 in \cite{FMN16}) and moderate deviations (see Theorem 4.2.1 in \cite{FMN16}). 
\begin{theorem}[Extended CLT for lacunary Walsh series] Under the assumptions of Theorem \ref{walsh-modG}, for  $y=o\left(A_n\right)$,
\begin{equation*}
\left| \left\lbrace  x \in [0,1]; \frac{S^{\mathrm{W}}_n(x)}{A_n} \geq y \right\rbrace  \right| = \left(1+o(1)\right) \frac{1}{\sqrt{2 \pi}} \int_{y}^{\infty} e^{-\frac{u^2}{2}} du=\frac{e^{-\frac{y^2}{2}}}{y\sqrt{2\pi}}\left(1+o(1)\right).
\end{equation*}
\end{theorem}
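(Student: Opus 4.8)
The plan is to read this statement as the ``normality zone'' consequence of the strong mod-Gaussian convergence already secured in Theorem~\ref{walsh-modG}, so that essentially no new analytic input is required beyond invoking the general machinery. Set $X_n := S^{\mathrm{W}}_n$, viewed as a random variable on the probability space $([0,1],\mathcal{B},|\cdot|)$, so that the abstract probability $\P$ coincides with Lebesgue measure. By Theorem~\ref{walsh-modG}, $(X_n)_{n\in\N}$ converges mod-Gaussian in the sense of Definition~\ref{mod-Gauss} on all of $\C$, with parameters $t_n=A_n^2$ (so that $\sqrt{t_n}=A_n$) and limiting function $\psi$. Because the convergence holds on the full complex plane and $\psi$ does not vanish on the real line, the hypotheses of the extended central limit theorem for mod-Gaussian sequences are met.

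First I would invoke Theorem~4.3.1 of \cite{FMN16}. Applied to $(X_n)$ with the above parameters, it yields that for any sequence $y=y_n\to\infty$ lying in the normality zone,
\begin{align*}
\P\left[\frac{X_n}{\sqrt{t_n}}\geq y\right]=\left(1+o(1)\right)\frac{1}{\sqrt{2\pi}}\int_y^\infty e^{-\frac{u^2}{2}}\,du .
\end{align*}
The only point that needs checking is that the prescribed range $y=o(A_n)$ is exactly this normality zone. I would verify this through the saddle-point heuristic underlying Theorem~4.3.1: writing $\Lambda_n(z)=\log\varphi_n(z)\approx t_n\frac{z^2}{2}+\log\psi(z)$, the tail $\P[X_n\geq y\sqrt{t_n}]$ is governed by the real saddle $z^\ast$ solving $\Lambda_n'(z^\ast)=y\sqrt{t_n}$, giving $z^\ast\approx y/\sqrt{t_n}$. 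In the case $q\geq 2$ one has $\log\psi(z)=-\frac{\kappa_4}{12}z^4$, so the correction to the Gaussian exponent is $\log\psi(z^\ast)=O\!\left(y^4/t_n^2\right)=O\!\left(y^4/A_n^4\right)$, which tends to $0$ precisely when $y=o(A_n)$; in the case $1<q<2$ one has $\psi\equiv 1$ and no correction at all. Hence in both cases the $\psi$-factor disappears in the limit and the pure Gaussian tail is recovered throughout $y=o(A_n)$.

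To conclude, I would substitute $\sqrt{t_n}=A_n$ and identify $\P$ with $|\cdot|$, so that
\begin{align*}
\left|\left\lbrace x\in[0,1];\ \frac{S^{\mathrm{W}}_n(x)}{A_n}\geq y\right\rbrace\right|=\left(1+o(1)\right)\frac{1}{\sqrt{2\pi}}\int_y^\infty e^{-\frac{u^2}{2}}\,du,
\end{align*}
and the second equality then follows from the classical Mills-ratio asymptotic: integrating by parts gives $\int_y^\infty e^{-u^2/2}\,du=\frac{e^{-y^2/2}}{y}\left(1+O(y^{-2})\right)$ as $y\to\infty$, which absorbs into the $(1+o(1))$ factor. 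I expect the main (and essentially only) obstacle to be notational bookkeeping: confirming that the two-case limiting function $\psi$ from Theorem~\ref{walsh-modG} genuinely produces a negligible correction across the entire claimed zone, i.e. matching the abstract normality-zone hypothesis of \cite{FMN16} to the explicit condition $y=o(A_n)$. The genuinely hard estimates were all carried out already in establishing Theorem~\ref{walsh-modG} itself.
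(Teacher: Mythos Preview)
Your proposal is correct and follows exactly the paper's own approach: the paper does not give a separate proof for this statement but simply notes that it is an immediate consequence of Theorem~\ref{walsh-modG} via Theorem~4.3.1 in \cite{FMN16}. Your additional verification that the normality zone coincides with $y=o(A_n)=o(\sqrt{t_n})$ and the Mills-ratio step are fine elaborations of what the paper leaves implicit.
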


\begin{theorem}[Moderate deviations for lacunary Walsh series ] We assume that the assumptions of Theorem \ref{walsh-modG} are satisfied. 
Then for $y>0$,
\begin{equation*}
\left| \left\lbrace  x \in [0,1]; S^{\mathrm{W}}_n(x) \geq A_n^2 y \right\rbrace  \right|  =\frac{e^{-A_n^2\frac{y^2}{2}}}{yA_n\sqrt{2\pi }}\psi(y)\left(1+o(1)\right),
\end{equation*}
and for $y < 0$,
\begin{equation*}
\left| \left\lbrace  x \in [0,1]; S^{\mathrm{W}}_n(x) \leq A_n^2 y \right\rbrace  \right|  =\frac{e^{-A_n^2\frac{y^2}{2}}}{|y|A_n\sqrt{2\pi }}\psi(y)\left(1+o(1)\right),
\end{equation*}
\end{theorem}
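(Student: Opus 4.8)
The plan is to deduce the statement directly from the general moderate-deviation principle for mod-Gaussian convergent sequences, namely Theorem 4.2.1 in \cite{FMN16}, feeding in Theorem \ref{walsh-modG} as the input. First I would fix the probabilistic setting: take $\left([0,1],\mathcal{B}([0,1]),|\cdot|\right)$ as the underlying probability space and regard $X_n := S^{\mathrm{W}}_n$ as a bounded random variable on it, so that $\varphi_n(z)=\E\!\left[e^{zX_n}\right]=\int_0^1 e^{z S^{\mathrm{W}}_n(x)}\,dx$ is an entire function of $z$. Theorem \ref{walsh-modG} then asserts precisely that $(X_n)_{n\in\N}$ converges mod-Gaussian in the sense of Definition \ref{mod-Gauss} on all of $\C$, with parameters $t_n=A_n^2\to\infty$ and limiting function $\psi$. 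In particular the strip of convergence is $S_{(-\infty,\infty)}$, so there is no upper restriction on the size of the admissible deviation and fixed $y\in\R\setminus\{0\}$ is allowed.

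Next I would invoke Theorem 4.2.1 of \cite{FMN16}. Applied to $(X_n)_{n\in\N}$ with parameter sequence $t_n$ and limiting function $\psi$, and for fixed $y>0$, it yields
\begin{align*}
\P[X_n\geq t_n y]=\frac{e^{-t_n y^2/2}}{y\sqrt{2\pi t_n}}\,\psi(y)\,(1+o(1)).
\end{align*}
Substituting $t_n=A_n^2$, hence $\sqrt{t_n}=A_n$, and recalling that $\P[X_n\in\cdot\,]=\left|\{x\in[0,1]:S^{\mathrm{W}}_n(x)\in\cdot\,\}\right|$, gives exactly the asserted asymptotics for $y>0$. The mechanism behind the cited result is an exponential (Esscher) change of measure: one tilts by $e^{yX_n}$, under which $X_n$ is recentred near $t_n y$; the mod-Gaussian convergence guarantees both that $\varphi_n(y)=e^{t_n y^2/2}\psi(y)(1+o(1))$ and that, under the tilted law, the fluctuations $(X_n-t_n y)/\sqrt{t_n}$ are asymptotically standard Gaussian with a locally flat density $\approx (2\pi t_n)^{-1/2}$, so that the residual integral $\int_0^\infty e^{-yu}\,du=1/y$ produces the prefactor $1/(y\sqrt{2\pi t_n})$.

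Finally I would treat $y<0$ by symmetry. Since $\E\!\left[e^{z(-X_n)}\right]=\varphi_n(-z)$ and $\varphi_n(-z)e^{-t_n z^2/2}\to\psi(-z)$ locally uniformly, the sequence $(-X_n)_{n\in\N}$ also converges mod-Gaussian on $\C$ with the same parameters $t_n$ and limiting function $z\mapsto\psi(-z)$. Crucially, in both regimes of Theorem \ref{walsh-modG} the limiting function is even, since $\psi(z)=e^{-z^4\kappa_4/12}$ and $\psi(z)=1$ both satisfy $\psi(-z)=\psi(z)$. Hence applying the $y>0$ conclusion to $(-X_n)_{n\in\N}$ at level $|y|$ gives $\P[X_n\leq t_n y]=\P[-X_n\geq t_n|y|]=\frac{e^{-t_n y^2/2}}{|y|\sqrt{2\pi t_n}}\,\psi(|y|)\,(1+o(1))$, and $\psi(|y|)=\psi(y)$ reproduces the stated formula. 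I expect no genuine obstacle here: the substantive analytic work has already been carried out in establishing Theorem \ref{walsh-modG}, and what remains is to verify that its hypotheses match those required in \cite{FMN16} (in particular that convergence holds on all of $\C$, which removes any range restriction on $y$) and to keep track of the scaling $t_n=A_n^2$ together with the evenness of $\psi$.
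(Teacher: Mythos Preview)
Your proposal is correct and matches the paper's approach: the paper does not give a separate proof but simply notes that the moderate-deviation statement follows immediately from Theorem 4.2.1 in \cite{FMN16}, given the mod-Gaussian convergence established in Theorem \ref{walsh-modG}. Your added remarks on the Esscher-tilt mechanism and the symmetry argument for $y<0$ via the evenness of $\psi$ are correct elaborations, though the cited result in \cite{FMN16} already covers both tails directly.
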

In Section \ref{Section:4} we show that we have a zone of control i.e. the conditions \ref{Z1} and \ref{Z2} are satisfied. As a result, the following two theorems hold.
\begin{theorem}[LLT for lacunary Walsh series]\label{walsh-LLT}
Suppose the assumptions of Theorem \ref{walsh-modG} are satisfied. Let $y \in \R$ and $B$ be a fixed Jordan measurable subset with $|B|>0.$ Then for every exponent $\delta \in \left(0,  \gamma + \frac{1}{2} \right),$ with
\begin{align*}
\gamma = \begin{cases} \frac{1}{10}, \qquad \qquad \mathrm{if} \quad q\geq 2  \\\min \lbrace \frac{\varepsilon}{3} , \frac{1}{3} \rbrace, \quad \mathrm{if} \quad 1 < q < 2 \end{cases} , 
\end{align*} 
one has 
 \begin{equation*}
\lim_{n\to\infty}A_n^{2 \d}\left| \left\lbrace x\in[0,1] ; \frac{ S^{\mathrm{W}}_n(x)}{A_n}-y \in\frac{1}{A_n^{2\d}} B \right\rbrace \right|=\frac{|B|}{\sqrt{2\pi}}.
\end{equation*}
\end{theorem}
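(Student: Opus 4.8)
The plan is to deduce Theorem~\ref{walsh-LLT} directly from the general local limit theorem, Theorem~\ref{llt}, so that the whole task reduces to verifying that the partial sums $X_n:=S^{\mathrm{W}}_n$, viewed as random variables on $[0,1]$ endowed with Lebesgue measure, admit a zone of control in the sense of conditions \ref{Z1} and \ref{Z2}. With the mod-Gaussian parameter $t_n=A_n^2$ furnished by Theorem~\ref{walsh-modG} one has $\sqrt{t_n}=A_n$ and $(t_n)^\d=A_n^{2\d}$, so the event $\frac{X_n}{\sqrt{t_n}}-x\in\frac{1}{t_n^\d}B$ appearing in Theorem~\ref{llt} is literally $\frac{S^{\mathrm{W}}_n}{A_n}-y\in\frac{1}{A_n^{2\d}}B$ after setting $x=y$. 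Hence, once \ref{Z1} and \ref{Z2} are established with the stated values of $\g$, Theorem~\ref{llt} yields the claimed limit for every $\d\in(0,\g+\half)$ verbatim.

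To obtain \ref{Z1} I would estimate the characteristic function $\varphi_n(i\l)=\E\!\left[e^{i\l S^{\mathrm{W}}_n}\right]$ using the multiplicative structure of the Walsh system. Since each $W_{m_k}$ takes only the values $\pm1$, one has $e^{i\l\akn W_{m_k}(x)}=\cos(\l\akn)+i\sin(\l\akn)\,W_{m_k}(x)$, so the integrand factorizes over $k$. For $q\ge2$ the bound $m_{k+1}\ge2m_k$ makes the binary expansions of the $m_k$ linearly independent over $\mathbb{F}_2$; the functions $(W_{m_k})_k$ then form an independent Rademacher-type system, and integration collapses the product to $\prod_{k=1}^n\cos(\l\akn)$. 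Taking logarithms, using $\log\cos t=-\frac{t^2}{2}-\frac{t^4}{12}-\cdots$ and $t_n=A_n^2=\sum_k\akn^2$, gives
\begin{align*}
\log\varphi_n(i\l)+\frac{t_n\l^2}{2}=-\frac{\l^4}{12}\sum_{k=1}^n\akn^4-\sum_{j\ge3}c_j\,\l^{2j}\sum_{k=1}^n\akn^{2j},
\end{align*}
whose leading term tends to $-\frac{\l^4}{12}\kappa_4$ by the hypothesis $\sum_k\akn^4\to\kappa_4$. Bounding the tail through $\sum_k\akn^{2j}\le d_n^{2j-4}\sum_k\akn^4$ and invoking $nd_n^5\to0$ keeps the remainder uniformly small on $|\l|\le DA_n^{2\g}$, and the inequality $|e^z-1|\le|z|e^{|z|}$ turns this into a bound $K_1|\l|^v e^{K_2|\l|^w}$ with $n$-independent constants, which is \ref{Z1}; the leading fourth-order term fixes $v=4$, the higher even-order terms supply $w$, and the requirement of uniformity on the growing zone is what forces $\g=\frac{1}{10}$.

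For $1<q<2$ the functions $(W_{m_k})_k$ need no longer be independent: the binary expansions may fail to be linearly independent over $\mathbb{F}_2$, so certain products $W_{m_{i_1}}\cdots W_{m_{i_j}}$ collapse to the constant $W_0\equiv1$ and hence survive integration. I would again isolate the main product $\prod_k\cos(\l\akn)$ and control these extra collision terms using the lacunarity together with $n^{1+\varepsilon}d_n^3\to0$, showing that their aggregate contribution is absorbed into the envelope $K_1|\l|^v e^{K_2|\l|^w}$ on the zone corresponding to $\g=\min\{\varepsilon/3,\frac13\}$. In both regimes, once the index $(v,w)$ has been read off, checking \ref{Z2}---namely $w\ge2$, $-\half\le\g\le\frac{1}{w-2}$ and $D\le\left(\frac{1}{4K_2}\right)^{1/(w-2)}$---amounts to choosing $D$ small and verifying the inequalities for the two admissible values of $\g$.

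I expect the genuine difficulty to lie in the regime $1<q<2$, where the loss of independence forces one to quantify how often products of lacunary Walsh functions degenerate to the constant function; it is precisely this Diophantine-type bookkeeping, controlled by $n^{1+\varepsilon}d_n^3\to0$, that pins down the restrictive exponent $\g$. By contrast, the case $q\ge2$ is comparatively routine once independence is exploited: there the only real care is the passage from the cumulant series to a clean exponential bound holding uniformly on the expanding zone $|\l|\le DA_n^{1/5}$.
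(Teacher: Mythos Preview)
Your proposal is correct and follows essentially the same route as the paper: both reduce the result to Theorem~\ref{llt} by establishing a zone of control, treat $q\ge2$ via the exact independence of the $W_{m_k}$ (Lemma~\ref{lemma:4.1}) so that $\varphi_n(i\l)=\prod_k\cos(\l\akn)$, expand $\log\cos$ and apply $|e^z-1|\le|z|e^{|z|}$ to read off $v=w=4$ with $\g=\tfrac{1}{10}$, and handle $1<q<2$ by isolating the main product $\prod_k\cosh$ and bounding the residual Walsh-collision sum via the Diophantine count of Lemma~\ref{lemma:4.2} under $n^{1+\varepsilon}d_n^3\to0$, yielding $\g=\min\{\varepsilon/3,\tfrac13\}$. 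The only minor point to make explicit is that the paper works on $|\l|\le n^{\g}$ and then uses $A_n^2\le n d_n^2=o(n)$ (from $nd_n^5\to0$, resp.\ $n^{1+\varepsilon}d_n^3\to0$) to ensure that this zone contains $[-DA_n^{2\g},DA_n^{2\g}]$ for suitable $D$; you implicitly assume this passage, and your remark that ``the higher even-order terms supply $w$'' should in fact resolve to $w=4$ once you bound $\l^6\sum_k\akn^6\le C\l^4$ on the zone, exactly as the paper does.
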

\begin{theorem}[Speed of convergence for lacunary Walsh series]\label{walsh-speed}
Let $S^{\mathrm{W}}_n(x)$ be  the $n$th partial sum of the lacunary Walsh series that satisfies the assumptions of Theorem \ref{walsh-modG}, then one has
\begin{align*}
d_{Kol}\left(\frac{S^{\mathrm{W}}_n}{A_n}, \mathcal{N}(0,1)\right)\le C \frac{1}{A_n^{2\gamma+1}}
\end{align*}
where $d_{Kol}(\cdot, \cdot)$ is the Kolmogorov distance, $\gamma$ is specified above and $C$ is a constant (see Theorem 2.16 in \cite{FMN17}).
\end{theorem}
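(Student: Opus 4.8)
The plan is to derive this Berry--Esseen estimate as an immediate application of the general speed-of-convergence result for mod-Gaussian sequences carrying a zone of control, namely Theorem 2.16 in \cite{FMN17}. That theorem asserts that once a sequence $(X_n)$ admits a zone of control $[-Dt_n^{\g},Dt_n^{\g}]$ of index $(v,w)$ in the sense of conditions \ref{Z1} and \ref{Z2}, the Kolmogorov distance between $X_n/\sqrt{t_n}$ and the standard Gaussian is bounded by a constant multiple of $t_n^{-(\g+\half)}$. Since Theorem \ref{walsh-modG} produces mod-Gaussian convergence of $S^{\mathrm{W}}_n$ with parameter $t_n=A_n^2$, one has $t_n^{\g+\half}=A_n^{2\g+1}$, so the claimed rate $C\,A_n^{-(2\g+1)}$ is exactly the output of \cite{FMN17} as soon as the zone of control is available. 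The whole task therefore reduces to verifying \ref{Z1} and \ref{Z2} for $S^{\mathrm{W}}_n$ with the value of $\g$ recorded in Theorem \ref{walsh-LLT}.

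That verification, however, is already the content of Section \ref{Section:4}, where the same zone of control is constructed as the input to the local limit theorem, Theorem \ref{walsh-LLT}; so I would simply import it. Concretely, first I would expand the characteristic function $\E[e^{i\l S^{\mathrm{W}}_n(x)}]$ using the product and independence structure that the lacunarity condition (\ref{lacunary}) imposes on the Walsh blocks, and estimate $|\E[e^{i\l S^{\mathrm{W}}_n}]e^{-t_n\l^2/2}-1|$ on the growing interval $|\l|\le Dt_n^{\g}$. For $q\ge2$ the leading deviation from the Gaussian is governed by the fourth-cumulant term in $\psi(z)=e^{-z^4\kappa_4/12}$, which, together with the moment hypotheses $\sum_{k=1}^n\akn^4\to\kappa_4$ and $nd_n^5\to0$, fixes an index $(v,w)$ with $v\ge 2\g+1$; for $1<q<2$ the limiting function is trivial and a smaller index suffices. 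The admissibility constraints of \ref{Z2}, namely $w\ge2$, $-\half\le\g\le\frac{1}{w-2}$ and $D\le(4K_2)^{-1/(w-2)}$, are then what pin down the tabulated values $\g=\frac{1}{10}$ for $q\ge2$ and $\g=\min\{\frac{\varepsilon}{3},\frac{1}{3}\}$ for $1<q<2$.

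With the zone of control in hand, I would conclude by running the Esseen smoothing argument internal to \cite{FMN17}, Theorem 2.16, with cutoff $T=Dt_n^{\g+\half}$: the boundary term is of order $T^{-1}=O(t_n^{-(\g+\half)})$, while the Gaussian factor $e^{-\mu^2/2}$ makes the remaining frequency integral of order $t_n^{-v/2}$, which is dominated by the boundary term because $v\ge 2\g+1$. This yields $d_{Kol}(S^{\mathrm{W}}_n/A_n,\mathcal{N}(0,1))\le C\,t_n^{-(\g+\half)}=C\,A_n^{-(2\g+1)}$, as asserted. The hard part will be the uniform control demanded by \ref{Z1} over a zone that itself grows with $n$: the exponential factor $e^{K_2|\l|^w}$ must remain bounded all the way out to the edge $|\l|=Dt_n^{\g}$, and it is precisely the bound on $D$ coming from \ref{Z2} together with the decay conditions on $(\akn)$ that make this possible. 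Everything beyond that point is the bookkeeping already performed for Theorem \ref{walsh-LLT}.
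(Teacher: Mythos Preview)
Your proposal is correct and follows essentially the same approach as the paper: verify conditions \ref{Z1} and \ref{Z2} to obtain a zone of control, then invoke Theorem 2.16 of \cite{FMN17}. The paper's proof is in fact even terser---it proves Theorems \ref{walsh-LLT} and \ref{walsh-speed} simultaneously by checking the zone of control once (obtaining $v=w=4$ in both regimes $q\ge2$ and $1<q<2$, not a smaller index in the latter case as you surmise) and then citing the two consequences from \cite{BMN17} and \cite{FMN17} respectively.
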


Next we propose another approach to show Theorem \ref{KacT} using martingale theory.
\begin{theorem} \label{CLT_KAC_general}
Let $f:[0,1] \to \R$ be a function in $L^2$ that we extend periodically  to $\R$ (only for notational reasons). We denote by 
\begin{align*}
f_n := \E[f|\mathcal{D}_n], \quad \phi_n := f-f_n,
\end{align*}
where $\mathcal{D}_n$ is the $\sigma$-algebra generated by the intervals $\left(\frac{k}{2^n}, \frac{k+1}{2^n} \right],$ $0 \leq k \leq 2^n-1.$ 
If $\sum_{s \geq 1} \Vert \phi_s \Vert_2 < \infty,$ then 
\begin{align*}
\left| \lbrace x \in [0,1]; \frac{f(x) + \cdots +f\left(2^{n-1} x \right)}{\sqrt{n}} \leq t \rbrace \right| \to \frac{1}{\sigma \sqrt{2 \pi}} \int_{-\infty}^t e^{-\frac{u^2}{2 \sigma^2}} du,
\end{align*}
where $\sigma^2 = \lim_{n \to \infty} \left| \left| \frac{f(x) + \cdots +f\left(2^{n-1} x \right)}{\sqrt{n}}\right| \right|_2^2$ provided this limit is different from zero.
\end{theorem}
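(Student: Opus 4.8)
The plan is to realise $S_n/\sqrt n$ as a normalised martingale for the dyadic filtration $(\mathcal{D}_j)_j$ and to apply the martingale central limit theorem, exploiting the ergodicity of the doubling map $Tx=2x\bmod 1$ to control the conditional quadratic variation. First I would reduce to the centred case: since $\phi_s=f-\E[f\mid\mathcal{D}_s]$ is unchanged when a constant is subtracted from $f$, we may assume $\int_0^1 f\,dx=0$, so that $f_0=0$ and $f=\sum_{s\ge1}D_s$ in $L^2$, where $D_s:=f_s-f_{s-1}=\phi_{s-1}-\phi_s$ are dyadic martingale differences. Because $\Vert D_s\Vert_2\le\Vert\phi_{s-1}\Vert_2+\Vert\phi_s\Vert_2$, the hypothesis gives $\sum_{s\ge1}\Vert D_s\Vert_2<\infty$. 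Expanding $S_n=\sum_{k=0}^{n-1}f\circ T^k=\sum_{k=0}^{n-1}\sum_{s\ge1}D_s\circ T^k$, the key observation is that $D_s\circ T^k$ depends only on the binary digits $x_{k+1},\dots,x_{k+s}$ and, for fixed $x_{k+1},\dots,x_{k+s-1}$, sums to zero over the two values of the last digit $x_{k+s}$; hence $D_s\circ T^k=(-1)^{x_{k+s}}g_{s,k}$ with $g_{s,k}$ depending only on $x_{k+1},\dots,x_{k+s-1}$.

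Grouping the summands by the position $j=k+s$ of their common last digit, set $M_j^{(n)}:=\sum_{k+s=j,\;0\le k\le n-1}D_s\circ T^k=(-1)^{x_j}C_j^{(n)}$, where $C_j^{(n)}$ is $\mathcal{D}_{j-1}$-measurable. Then $S_n=\sum_{j\ge1}M_j^{(n)}$, each $M_j^{(n)}$ is $\mathcal{D}_j$-measurable with $\E[M_j^{(n)}\mid\mathcal{D}_{j-1}]=0$, and, most usefully, $\E[(M_j^{(n)})^2\mid\mathcal{D}_{j-1}]=(C_j^{(n)})^2$. Thus $(M_j^{(n)})_j$ is a martingale difference array for $(\mathcal{D}_j)_j$ whose conditional variance is explicit.

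I would first prove the statement for the truncation $f^{(m)}:=f_m=\sum_{s=1}^m D_s$, which depends on finitely many digits, and then let $m\to\infty$. For fixed $m$ and $m\le j\le n$ the corresponding factor $C_j^{(m)}$ equals $G(x_{j-m+1},\dots,x_{j-1})$ for one fixed function $G$, so $\big((C_j^{(m)})^2\big)_j$ is stationary under the shift apart from $O(m)$ boundary and overhang terms. Birkhoff's ergodic theorem (the doubling map is ergodic) then gives $\frac1n\sum_j(C_j^{(m)})^2\to\sigma_m^2:=\E[G^2]$ almost surely, which is precisely the conditional variance hypothesis of the martingale central limit theorem (Hall--Heyde); the conditional Lindeberg condition follows from stationarity and $G\in L^2$ via dominated convergence. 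Hence $S_n^{(m)}/\sqrt n\Rightarrow\mathcal{N}(0,\sigma_m^2)$ as $n\to\infty$.

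To remove the truncation I would use the decay of correlations: since $\langle D_s\circ T^k,D_{s'}\circ T^{k'}\rangle=0$ unless $k+s=k'+s'$, one gets $\sum_d|\langle\phi_m,\phi_m\circ T^d\rangle|\le(\sum_{s>m}\Vert D_s\Vert_2)^2=:r_{m+1}^2$, and therefore $\Vert S_n-S_n^{(m)}\Vert_2=\Vert\sum_{k=0}^{n-1}\phi_m\circ T^k\Vert_2\le C\sqrt n\,r_{m+1}$. Here the assumption $\sum_s\Vert\phi_s\Vert_2<\infty$ is used decisively: it forces $r_{m+1}\to0$, so the $L^2$-approximation $\frac1{\sqrt n}\Vert S_n-S_n^{(m)}\Vert_2\le C\,r_{m+1}$ is uniform in $n$, and the same estimate yields $\sigma_m^2\to\sigma^2=\lim_n\Vert S_n/\sqrt n\Vert_2^2$. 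The standard triangular-array approximation lemma then upgrades the truncated limits to $S_n/\sqrt n\Rightarrow\mathcal{N}(0,\sigma^2)$ (assuming $\sigma^2>0$), which is the asserted convergence of the distribution functions. I expect the main obstacle to be the second step: one must arrange the martingale so that its conditional quadratic variation is a genuine ergodic average, which is exactly what the regrouping by last digit and the truncation to a fixed window achieve, and then control the boundary and tail contributions through the summability hypothesis; the Lindeberg condition and the limit $m\to\infty$ are comparatively routine.
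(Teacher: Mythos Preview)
Your proposal is correct and follows the same overall skeleton as the paper: truncate $f$ to $f_r$, prove a CLT for the truncated sums, and show that the remainder $\frac{1}{\sqrt n}\sum_{k=0}^{n-1}\phi_r(2^kx)$ is uniformly small in $L^2$ via the orthogonality relations $\langle D_s\circ T^k,D_{s'}\circ T^{k'}\rangle=0$ unless $k+s=k'+s'$. Your remainder bound $\tfrac{1}{n}\Vert\sum_k\phi_m\circ T^k\Vert_2^2\le C\bigl(\sum_{s>m}\Vert D_s\Vert_2\bigr)^2$ is essentially the paper's inequality (their estimate reads $\Vert\phi_r\Vert_2^2+2\Vert\phi_r\Vert_2\sum_{s\ge r+1}\Vert\phi_s\Vert_2$), proved the same way.

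The genuine difference lies in how the CLT for the truncated part $f_r$ is obtained. The paper observes that $f_r\circ T^k$ depends only on the digits $x_{k+1},\dots,x_{k+r}$, so the sequence $\bigl(f_r\circ T^k\bigr)_k$ is $r$-dependent and stationary, and invokes Diananda's CLT for $m$-dependent variables directly. You instead regroup by ``last digit'' $j=k+s$ to obtain a martingale difference array $M_j^{(n)}=(-1)^{x_j}C_j^{(n)}$ and apply the Hall--Heyde martingale CLT, feeding Birkhoff's ergodic theorem into the conditional variance condition. The paper's route is shorter and uses only a classical black box; your route makes the martingale structure explicit at every step and identifies $\sigma_m^2=\E[G^2]$ as an ergodic average, which is conceptually pleasing and would transfer to settings where ergodicity is available but strict $m$-dependence is not. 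For the product-space generalisation (the paper's next theorem) both arguments carry over, though your factorisation $M_j=(-1)^{x_j}C_j$ is special to the two-point fibre and would have to be replaced by the conditional variance $\E[(M_j)^2\mid\mathcal F_{j-1}]$ directly.
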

\begin{remark}
If $f$ is H\"older continuous with exponent $\beta$ we have that 
\begin{align*}
\Vert \phi_n\Vert_2 \leq \Vert \phi_n\Vert_\infty \leq C2^{-n \beta}.
\end{align*}
The approximation hypotheses of the theorem is therefore satisfied. 
\end{remark}
\begin{remark}
For $f(x) = \sum_{k \geq 1} a_k r_k(x),$ where $\left(r_k(x)\right)_{k \geq 0}$ are Rademacher functions, we have 
\begin{align*}
\Vert \phi_n\Vert_2 = \sqrt{\sum_{k > n} a_k^2}.
\end{align*}
Clearly $ \sum_n \sqrt{\sum_{k > n} a_k^2} < \infty$ implies $\sum_n n^2 a_n^2 < \infty.$ Such a function $f$ is not H\"older continuous.
\end{remark}
\begin{remark}
One can identify the interval $[0,1]$ with $\otimes_{k=1}^\infty \lbrace 0, 1 \rbrace$ equipped with the product measure $\left(\frac{1}{2}, \frac{1}{2}\right).$ The identification is done through the mapping $\left(x_k \right)_k \to \sum_k \frac{x_k}{2^k}.$ Note that the multiplication by $2$ is then a shift.
\end{remark}
The previous theorem can be generalized to infinite product spaces of arbitrary probability spaces, equipped with the shift operator.   
\begin{theorem}\label{CLT_shift} Let $(E, \mathcal{E}, \mu)$ be a probability space. Let $\Omega = \prod_{k \geq 1} E,$ $\mathcal{F}_\infty = \otimes_{k \geq 1} \mathcal{E}$ be the $\sigma$-algebra on the Cartesian product $\Omega$ and $\mathbb{P} = \otimes_{k \geq 1} \mu$ be the product measure.
Furthermore, the projection $\mathrm{pr}_j,  j \in \N$ is defined as
\begin{align*}
\mathrm{pr}_j \quad : \quad \Omega & \to E, \\
(x_1, x_2, \dots) & \to x_j.
\end{align*}
The shift operator $\theta$ defined as
\begin{align*}
\theta : \Omega & \to \Omega \quad \quad \quad \mathrm{satisfies} \quad \P \circ \theta^{-1} = \P \\
(x_1, x_2, \dots) &  \to (x_2, x_3, \dots)
\end{align*}
Suppose $f \in L^2$ such that $\int f d\P = 0.$ We denote by
\begin{align*}
f_r = \E \left[f | \mathcal{F}_r  \right], \quad \phi_r = f - f_r,
\end{align*} 
where $\mathcal{F}_r=\sigma \left(\mathrm{pr}_1, \dots, \mathrm{pr}_r \right)$ and $\mathcal{F}^r=\sigma \left(\mathrm{pr}_{r+1}, \dots,  \right)$ are $\sigma$-algebras.
Suppose $\sum_{r \geq 1} ||\phi_r||_2 < \infty$ then 
\begin{align*}
\P \left[ \frac{1}{\sqrt{n}} \sum_{k=0}^{n-1} f \circ \theta^k \leq t \right]  \to \frac{1}{\sigma \sqrt{2 \pi}} \int_{-\infty}^t e^{-\frac{u^2}{2 \sigma^2}} du,
\end{align*}
with $\sigma^2 = \lim_{n \to \infty} \frac{1}{n} \E \left[\left(f + f \circ \theta + \cdots f \circ \theta^{n-1} \right)^2 \right]$ provided $\sigma^2$ is different from zero.  
\end{theorem}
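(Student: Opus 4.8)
The plan is to reduce the problem, via a martingale--difference decomposition adapted to the coordinates, to a central limit theorem for a stationary $m$-dependent sequence, and then to transfer the conclusion by a uniform $L^2$-approximation argument. First I would set $f_0 := \E[f] = 0$ and write $\Delta_s := f_s - f_{s-1} = \phi_{s-1} - \phi_s$ for $s \geq 1$, so that $f = \sum_{s \geq 1}\Delta_s$ in $L^2$ and $\sum_{s \geq 1}\|\Delta_s\|_2 \leq 2\sum_{r \geq 0}\|\phi_r\|_2 < \infty$ (note $\phi_0 = f$). Each $\Delta_s$ is $\mathcal{F}_s$-measurable and satisfies $\E[\Delta_s \mid \mathcal{F}_{s-1}] = 0$, i.e.\ it is a martingale difference; equivalently $\Delta_s$ depends on the coordinates $x_1, \dots, x_s$ with conditional mean zero given $x_1, \dots, x_{s-1}$.

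The crux is the following orthogonality fact. Since $\mathrm{pr}_j \circ \theta = \mathrm{pr}_{j+1}$, the function $\Delta_s \circ \theta^k$ depends only on $x_{k+1}, \dots, x_{k+s}$ and has conditional mean zero given $x_{k+1}, \dots, x_{k+s-1}$. I would show that $\langle \Delta_s \circ \theta^a, \Delta_{s'} \circ \theta^b\rangle = 0$ unless $a + s = b + s'$: if, say, the top index $a+s$ strictly exceeds $b+s'$, then $\Delta_{s'}\circ\theta^b$ is measurable with respect to $\sigma(x_1, \dots, x_{a+s-1})$, while $\E[\Delta_s\circ\theta^a \mid x_1, \dots, x_{a+s-1}] = 0$ because the top coordinate $x_{a+s}$ is independent of the remaining ones; conditioning then kills the inner product. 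In particular, for fixed $s$ the family $\{\Delta_s\circ\theta^k\}_{k \geq 0}$ is orthogonal, so by the measure-preserving property of $\theta$,
\begin{equation*}
\Big\|\sum_{k=0}^{n-1}\Delta_s\circ\theta^k\Big\|_2 = \sqrt{n}\,\|\Delta_s\|_2.
\end{equation*}
Writing $S_n = \sum_{k=0}^{n-1} f\circ\theta^k$ and $S_n^{(r)} = \sum_{k=0}^{n-1} f_r\circ\theta^k$ and using $f - f_r = \sum_{s > r}\Delta_s$, the triangle inequality in $L^2$ then yields the key uniform estimate
\begin{equation*}
\frac{1}{\sqrt n}\big\|S_n - S_n^{(r)}\big\|_2 \leq \sum_{s > r}\|\Delta_s\|_2 =: \varepsilon_r, \qquad \varepsilon_r \to 0,
\end{equation*}
independently of $n$.

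Next, for each fixed $r$ the function $f_r$ depends only on $x_1, \dots, x_r$, so $(f_r\circ\theta^k)_{k\geq 0}$ is a stationary, mean-zero, $(r-1)$-dependent sequence (terms whose index gap is at least $r$ depend on disjoint coordinate blocks, hence are independent). The classical central limit theorem for $m$-dependent stationary sequences then gives $\frac1{\sqrt n}S_n^{(r)} \Rightarrow \mathcal{N}(0, \sigma_r^2)$ with $\sigma_r^2 = \lim_n \frac1n \|S_n^{(r)}\|_2^2$. The same orthogonality bookkeeping applied to $f = \sum_s\Delta_s$ shows that the autocovariances $c_m = \langle f, f\circ\theta^m\rangle$ satisfy $\sum_m |c_m| < \infty$ (the pair $(\Delta_s, \Delta_{s'}\circ\theta^m)$ contributes only when $s = m + s'$, whence $\sum_{m}|c_m|\le(\sum_s\|\Delta_s\|_2)^2$), so that $\sigma^2 = \lim_n\frac1n\|S_n\|_2^2 = \sum_{m\in\mathbb{Z}} c_m$ exists; and letting $n\to\infty$ in the uniform estimate gives $|\sigma - \sigma_r| \le \varepsilon_r \to 0$, hence $\sigma_r^2 \to \sigma^2$.

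Finally I would invoke the standard approximation theorem for convergence in distribution: since (i) $\frac1{\sqrt n}S_n^{(r)}\Rightarrow\mathcal{N}(0,\sigma_r^2)$ for each $r$, (ii) $\mathcal{N}(0,\sigma_r^2)\Rightarrow\mathcal{N}(0,\sigma^2)$ as $r\to\infty$, and (iii) $\lim_{r\to\infty}\limsup_{n\to\infty}\P[\,|S_n - S_n^{(r)}| > \eta\sqrt n\,] = 0$ for every $\eta > 0$ by Markov's inequality and the uniform $L^2$ bound, it follows that $\frac1{\sqrt n}S_n \Rightarrow \mathcal{N}(0,\sigma^2)$ whenever $\sigma^2 \neq 0$, which is exactly the claimed convergence of distribution functions (the Gaussian limit being continuous). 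The main obstacle is the orthogonality computation underlying the uniform $L^2$ estimate: everything hinges on exploiting the precise one-sided coordinate structure of the shift so that the shifted martingale differences $\Delta_s\circ\theta^k$ decouple, producing an $L^2$ error that is $O(\sqrt n)$ with a constant vanishing as $r\to\infty$, rather than the trivial $O(n)$ bound. Once this is established, the $m$-dependent CLT and the transfer step are routine.
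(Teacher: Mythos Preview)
Your proposal is correct and follows essentially the same route as the paper: the martingale-difference decomposition $\Delta_s=f_s-f_{s-1}$, the orthogonality $\langle\Delta_s\circ\theta^a,\Delta_{s'}\circ\theta^b\rangle=0$ unless $a+s=b+s'$ (derived from the product structure), the $m$-dependent CLT applied to $(f_r\circ\theta^k)_k$, and a uniform-in-$n$ $L^2$ approximation to pass to the limit. Your packaging of the key $L^2$ bound (triangle inequality over $s$, then orthogonality for fixed $s$, yielding $\tfrac{1}{\sqrt n}\|S_n-S_n^{(r)}\|_2\le\sum_{s>r}\|\Delta_s\|_2$) is a bit cleaner than the paper's version, which instead expands the square of $\phi_r+\cdots+\phi_r\circ\theta^{n-1}$ and bounds the cross terms by Cauchy--Schwarz to get $\tfrac{1}{n}\|\phi_r+\cdots+\phi_r\circ\theta^{n-1}\|_2^2\le\|\phi_r\|_2^2+2\|\phi_r\|_2\sum_{s>r}\|\phi_s\|_2$, but the substance is identical.
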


Next we show that there is a mod-Gaussian convergence in the sense of Definition \ref{mod-Gauss-weak} for H{\"o}lder continuous periodic functions under some additional assumptions when the gap size goes to infinity.
\begin{theorem}[Mod-Gaussian convergence for ``big gaps''] \label{mod-Gauss for Holder} Let $f(x)$ be a measurable function defined $[0,1)$ and extended periodically by setting $f(x) = f(x+1)$ such that  
\begin{align*}
&\left |f(x) - f(y)\right| \leq h |x-y|^\alpha, \quad \alpha>0, x \neq y, \\
&\int_{0}^{1} f(x)dx = 0, \quad \int_{0}^1 f^2(x) dx = 1.
\end{align*}
Let $(m_k)_{k \in \mathbb{N}}$ be an increasing sequence of integers such that $b_k := \frac{m_{k+1}}{m_k} \in \lbrace 2, 3, \dots \rbrace,$ moreover, $b_k \to \infty,$ as $k \to \infty.$
Let $(\akn)_{1 \leq k \leq n}$ be a triangular array such that 
\begin{align*}
&A_n : = \left( \sum_{k=1}^n \akn^2 \right)^{\frac{1}{2}} \to \infty,\\
 &\sum_{k=1}^n |\akn|^3 \to 0,\\
& A_n \sum_{k=1}^n \frac{|\akn|}{b_k^\alpha} \to 0,
\end{align*} 
as $n \to \infty.$ We denote by $S^{\mathrm{H}}_n(x) = \sum_{k=1}^n \akn f\left(m_k x\right),$  then $S^{\mathrm{H}}_n(x)$ converges mod-Gaussian in the sense of Definition \ref{mod-Gauss-weak}.
\end{theorem}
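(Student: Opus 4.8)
The plan is to verify conditions \ref{H1} and \ref{H2} of Definition \ref{mod-Gauss-weak} for $X_n=S^{\mathrm H}_n$ with normalizing sequence $A_n$. Writing $\ckn=\akn/A_n$, so that $\sum_{k=1}^n\ckn^2=1$, and setting $\epsilon_k=\l\ckn$, the object of interest is
\[
\varphi_n\!\left(\tfrac{i\l}{A_n}\right)=\E\!\left[e^{i\l S^{\mathrm H}_n/A_n}\right]=\int_0^1\prod_{k=1}^n u_k(x)\,dx,\qquad u_k(x)=e^{i\epsilon_k f(m_kx)}.
\]
I first record the two consequences of the hypotheses that drive everything: $d_n\to0$ (since $d_n^3\le\sum_k|\akn|^3\to0$) and $\sum_k|\ckn|^3=A_n^{-3}\sum_k|\akn|^3\to0$. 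The strategy is (i) to \emph{decouple} the product, replacing $\int_0^1\prod_k u_k$ by $\prod_k\mu_k$ with $\mu_k=\int_0^1 u_k$, and (ii) to evaluate $\prod_k\mu_k$. Since \ref{H1} is the statement for fixed $\l$ while \ref{H2} forces $\l$ up to $A_nK$, I will prove every estimate uniformly on the range $|\l|\le A_nK$, so that \ref{H1} follows by specialization.

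For the decoupling I use the telescoping identity $\int_0^1\prod_{k=1}^n u_k-\prod_{k=1}^n\mu_k=\sum_{l=2}^n\big(\prod_{k>l}\mu_k\big)D_l$, where $D_l=\int_0^1\big(\prod_{k<l}u_k\big)(u_l-\mu_l)\,dx$; since $|\mu_k|\le1$, the error is at most $\sum_{l=2}^n|D_l|$. The key observation is that $u_l-\mu_l$ is spectrally supported on $m_l\mathbb{Z}\setminus\{0\}$, i.e. on frequencies of modulus $\ge m_l$, so only the high-frequency part $g_{\ge m_l}$ of $g:=\prod_{k<l}u_k$ contributes, and Cauchy--Schwarz gives $|D_l|\le\|g_{\ge m_l}\|_2\,\|u_l-\mu_l\|_2\le|\epsilon_l|\,\|g_{\ge m_l}\|_2$ (using $\|u_l-\mu_l\|_2\le\|u_l-1\|_2\le|\epsilon_l|\,\|f\|_2=|\epsilon_l|$). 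To bound $\|g_{\ge m_l}\|_2$ I invoke the $L^2$ Jackson estimate $\|g_{\ge M}\|_2\le C\sup_{|t|\le1/M}\|g(\cdot+t)-g\|_2$ together with the telescoping H\"older bound $\|g(\cdot+t)-g\|_\infty\le h|t|^\alpha\sum_{k<l}|\epsilon_k|m_k^\alpha$ (each factor $u_k$ is unimodular, with $|u_k(x+t)-u_k(x)|\le|\epsilon_k|h(m_k|t|)^\alpha$). Taking $M=m_l$ yields $\|g_{\ge m_l}\|_2\le Ch\sum_{k<l}|\epsilon_k|(m_k/m_l)^\alpha$, whence $|D_l|\le Ch|\epsilon_l|\sum_{k<l}|\epsilon_k|(m_k/m_l)^\alpha$.

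Summing and using $(m_k/m_l)^\alpha=\prod_{j=k}^{l-1}b_j^{-\alpha}\le b_k^{-\alpha}2^{-\alpha(l-k-1)}$ (each $b_j\ge2$), the geometric tail in $l$ collapses, and bounding the remaining factor by $\max_k|\epsilon_k|=|\l|d_n/A_n$,
\[
\sum_{l=2}^n|D_l|\le C'h\,\frac{|\l|d_n}{A_n}\sum_{k=1}^n|\epsilon_k|b_k^{-\alpha}=C'h\,\frac{\l^2 d_n}{A_n^2}\sum_{k=1}^n|\akn|b_k^{-\alpha}.
\]
By the third hypothesis $\sum_k|\akn|b_k^{-\alpha}=o(A_n^{-1})$, so on $|\l|\le A_nK$ the error is $O\!\big(K^2 d_n\,o(1)\big)\to0$ uniformly. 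For the main term, the substitution $y=m_kx$ (here $m_k\in\N$ and $f$ is $1$-periodic) gives $\mu_k=\chi(\epsilon_k)$ with $\chi(s)=\int_0^1 e^{isf}\,dy$ independent of $k$; since $\int f=0$, $\int f^2=1$ and $f$ is bounded, $\chi(s)=1-\tfrac{s^2}{2}+O(|s|^3)$ and $|\chi(s)|\le e^{-s^2/4}$ for $|s|\le s_0$. As $|\epsilon_k|\le Kd_n\to0$, for large $n$ one has $\log\prod_k\mu_k=-\tfrac{\l^2}{2}\sum_k\ckn^2+O(|\l|^3\sum_k|\ckn|^3)=-\tfrac{\l^2}{2}+O(K^3\sum_k|\akn|^3)$, which tends to $-\l^2/2$ uniformly on $|\l|\le A_nK$. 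Combining with the decoupling gives $\varphi_n(i\l/A_n)\to e^{-\l^2/2}$, which for fixed $\l$ is \ref{H1}. For \ref{H2} I combine the two bounds: on $|\l|\le A_nK$, $|\varphi_n(i\l/A_n)|\le\prod_k|\chi(\epsilon_k)|+\sum_l|D_l|\le e^{-\l^2/4}+C''h\,\tfrac{\l^2 d_n}{A_n^2}\sum_k|\akn|b_k^{-\alpha}$; the Gaussian term is integrable uniformly in $n$ and controls escape of mass to infinity, the second term integrated over $|\l|\le A_nK$ is $O(d_n\,o(1))\to0$, and the finitely many initial indices $n$ are fixed bounded compactly supported functions, which together yield the required uniform integrability.

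The main obstacle is the decoupling estimate of the second paragraph: one must quantify, uniformly for $\l$ as large as $A_nK$, that the growing gaps $b_k\to\infty$ render each partial product $\prod_{k<l}u_k$ spectrally negligible above $m_l$. The H\"older exponent enters precisely here through the modulus-of-continuity bound, and the resulting weight $\sum_k|\akn|b_k^{-\alpha}$ is exactly the quantity the third hypothesis controls. The delicate point is that this error must survive multiplication by $\l^2\le(A_nK)^2$, which is why the stronger normalizations $\sum_k|\akn|^3\to0$ and $A_n\sum_k|\akn|b_k^{-\alpha}\to0$ (rather than their $\ckn$-analogues) are the right hypotheses.
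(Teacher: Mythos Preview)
Your argument is correct and takes a genuinely different route from the paper. The paper proceeds by \emph{piecewise-constant approximation}: it replaces $f(m_kx)$ by $f_k(x)=g_k(m_kx)$ where $g_k$ is constant on the $b_k$ equal subintervals of $[0,1)$; the integer-ratio hypothesis $b_k\in\{2,3,\dots\}$ then makes the family $(f_k)_k$ \emph{exactly independent} on $[0,1)$ (each $f_k$ reads off a different digit in the mixed-radix expansion of $x$ determined by the $m_j$), so that $\int_0^1 e^{i\l\sum c_{k,n}f_k}$ factorizes. The replacement error is bounded pointwise by $|\l|\sum_k|c_{k,n}|\,h/b_k^{\alpha}$, whose integral over $|\l|\le A_nK$ is controlled by the third hypothesis, and the factorized term is handled by a three-term Taylor expansion of each factor, controlled by the second hypothesis. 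Your approach instead performs a \emph{Fourier-analytic decoupling}: you use that $u_l-\mu_l$ is spectrally supported on $m_l\mathbb{Z}\setminus\{0\}$ and control the tail $\|g_{\ge m_l}\|_2$ of each partial product via the $L^2$ Jackson inequality together with the H\"older modulus of continuity. The paper's method is more elementary (no approximation theory is invoked) and makes the underlying independence structure completely transparent, but it genuinely requires the integer ratios to manufacture independent approximants. Your spectral argument, by contrast, uses only that each $m_l$ is an integer (so that $u_l$ is $1/m_l$-periodic) and that $b_j\ge 2$ (for the geometric decay in $l-k$); the hypothesis $b_k\in\{2,3,\dots\}$ is never used, so your proof actually establishes the theorem under the weaker assumption $m_{k+1}/m_k\ge 2$, $b_k\to\infty$.
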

As before, the local limit theorem is a direct consequence of the theorem above.
\begin{theorem}[LLT for ``big gaps'']
Under the assumptions of Theorem \ref{mod-Gauss for Holder}, one has
\begin{align*}
A_n \left| \left\lbrace  x \in [0,1];S^{\mathrm{H}}_n(x) \in B \right\rbrace  \right| \to \frac{1}{\sqrt{2 \pi}} |B|,
\end{align*}
for all bounded Jordan measurable sets $B \subset \R.$
\end{theorem}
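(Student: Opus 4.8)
The plan is to obtain this statement as an immediate corollary of the mod-Gaussian convergence already established in Theorem \ref{mod-Gauss for Holder}, by feeding it into the general local limit theorem of Theorem \ref{llt_del}. The entire analytic content sits in the preceding theorem; what remains here is purely a matter of invoking the right black box and matching notation.

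First I would fix the probabilistic framework. I regard $[0,1]$ equipped with its Borel $\sigma$-algebra and Lebesgue measure as a probability space, and I set $X_n := S^{\mathrm{H}}_n$, viewed as a real-valued random variable on this space. Under this identification, $\P$ is precisely Lebesgue measure on $[0,1]$, so that for every Borel set $B \subset \R$ one has
\begin{align*}
\P[X_n \in B] = \left| \left\lbrace x \in [0,1]; S^{\mathrm{H}}_n(x) \in B \right\rbrace \right|.
\end{align*}
Next, Theorem \ref{mod-Gauss for Holder} guarantees that under the stated hypotheses on $f$, on the gap ratios $b_k$, and on the triangular array $(\akn)$, the sequence $(X_n)_{n \in \N} = (S^{\mathrm{H}}_n)_{n \in \N}$ converges mod-Gaussian in the sense of Definition \ref{mod-Gauss-weak} with the same normalizing sequence $A_n \to \infty$; equivalently, conditions \ref{H1} and \ref{H2} hold for $(X_n)$. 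This is exactly the hypothesis required by Theorem \ref{llt_del}, so no further verification is needed.

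Finally I would apply Theorem \ref{llt_del} to $(X_n)$. Its conclusion states that for every bounded Jordan measurable set $B \subset \R$,
\begin{align*}
A_n \P[X_n \in B] \to \frac{|B|}{\sqrt{2\pi}},
\end{align*}
and rewriting the left-hand side through the identification above yields precisely
\begin{align*}
A_n \left| \left\lbrace x \in [0,1]; S^{\mathrm{H}}_n(x) \in B \right\rbrace \right| \to \frac{1}{\sqrt{2\pi}} |B|,
\end{align*}
which is the claim. Since the mod-Gaussian convergence is already in hand, there is no genuine obstacle to overcome; the only point requiring any care is the translation between the measure-theoretic formulation of Theorem \ref{llt_del} (phrased via $\P$) and the Lebesgue-measure formulation of the present statement, and the observation that the admissible class of sets $B$ (bounded and Jordan measurable) coincides in the two theorems.
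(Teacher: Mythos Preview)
Your proposal is correct and matches the paper's approach exactly: the paper states this theorem immediately after Theorem \ref{mod-Gauss for Holder} with the remark that ``the local limit theorem is a direct consequence of the theorem above,'' meaning precisely the application of Theorem \ref{llt_del} that you spell out. There is nothing to add.
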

Next we discuss the function $f(x) = x -\lfloor  x \rfloor - \half ,$ the first Bernoulli polynomial. In \cite{K38} a central limit theorem has been shown for this function and we are going to show that there is also mod-Gaussian convergence.
\begin{theorem}[Mod-Gaussian convergence for $f(x) = x -\lfloor  x \rfloor - \half $]\label{mod-Gauss specf} Let $S^{f}_n(x) = \frac{1}{n^{1/4}} \sum_{k=1}^n  f\left(2^k x\right),$ with $f(x) = x -\lfloor  x \rfloor - \half. $ Then $\frac{S^{f}_n(x)}{n^{1/4}}$ converges mod-Gaussian in the sense of Definition \ref{mod-Gauss} on $\mathbb{C}$ with parameters $t_n = \frac{\sqrt{n}}{4}$ and the limiting function  $ \psi(z)= e^{-\frac{z^4}{192} }.$
\end{theorem}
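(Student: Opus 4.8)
The plan is to exploit the dyadic structure directly: passing to binary digits turns $T_n:=\sum_{k=1}^n f(2^k x)$ into an \emph{exact} linear combination of i.i.d.\ bounded variables, after which the mod-Gaussian convergence reduces to an elementary computation with $\log\cosh$. This is precisely the identification of $[0,1]$ with $\otimes\{0,1\}$ mentioned in the remark above, under which multiplication by $2$ is the shift.

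First I would write $x=\sum_{i\ge1}\epsilon_i 2^{-i}$ with $(\epsilon_i)_{i\ge1}$ i.i.d.\ Bernoulli$(\half)$ under Lebesgue measure, and set $\eta_i:=\epsilon_i-\half\in\{-\half,\half\}$. Since $\{2^k x\}=\sum_{j\ge1}\epsilon_{k+j}2^{-j}$ off a countable set, one gets the a.s.\ identity
\[
f(2^k x)=\{2^k x\}-\half=\sum_{j\ge1}2^{-j}\eta_{k+j},
\]
so that $T_n=\sum_{i\ge2} b_i^{(n)}\eta_i$ with explicit coefficients $b_i^{(n)}=1-2^{1-i}$ for $2\le i\le n+1$ and $b_i^{(n)}=2^{1-i}(2^n-1)$ for $i\ge n+2$. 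Because the $\eta_i$ are independent with $\E[e^{s\eta_i}]=\cosh(s/2)$, the moment generating function of $X_n:=n^{-1/4}T_n$ factorizes, and is entire in $z$ (so the strip is all of $\C$):
\[
\varphi_n(z)=\E[e^{zX_n}]=\prod_{i\ge2}\cosh\!\Big(\tfrac{z\,b_i^{(n)}}{2\,n^{1/4}}\Big).
\]

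Next I would take logarithms and use $\log\cosh w=\tfrac{w^2}{2}-\tfrac{w^4}{12}+O(w^6)$; note that only even powers occur, which automatically kills all odd cumulants. This gives
\[
\log\varphi_n(z)=\frac{z^2}{8\sqrt n}\sum_{i}(b_i^{(n)})^2-\frac{z^4}{192\,n}\sum_{i}(b_i^{(n)})^4+R_n(z),
\]
and the theorem follows from three geometric-sum estimates on the explicit $b_i^{(n)}$: namely $\sum_i (b_i^{(n)})^2=n+O(1)$, $\sum_i (b_i^{(n)})^4=n+O(1)$, and $\sum_i (b_i^{(n)})^{2m}=O(n)$ for each $m\ge3$. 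The first yields $\tfrac{z^2}{8\sqrt n}\sum(b_i^{(n)})^2=\tfrac{\sqrt n}{8}z^2+o(1)=t_n\tfrac{z^2}{2}+o(1)$ with $t_n=\sqrt n/4$; the second yields $-\tfrac{z^4}{192}(1+O(1/n))\to-\tfrac{z^4}{192}$; and the remainder obeys $|R_n(z)|\le C\,n^{-3/2}\sum_i (b_i^{(n)})^6\,|z|^6=O(n^{-1/2})$ on compacta. Hence $\varphi_n(z)e^{-t_n z^2/2}\to e^{-z^4/192}=\psi(z)$, locally uniformly on $\C$, which is exactly the mod-Gaussian convergence of Definition~\ref{mod-Gauss}.

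The genuinely delicate points are bookkeeping rather than conceptual. I must justify the a.s.\ digit identity (the exceptional dyadic rationals form a null set and affect no moment generating function), and I must secure convergence that is \emph{locally uniform} on all of $\C$. I expect the main obstacle to be packaging the $\log\cosh$ remainder so that the bound $O(n^{-1/2})$ holds uniformly for $z$ in an arbitrary compact set — equivalently, that the series $\sum_i\log\cosh\!\big(zb_i^{(n)}/(2n^{1/4})\big)$ converges uniformly and its part of order $w^6$ and higher is negligible. Here the uniform estimate $|b_i^{(n)}|\le1$ together with $\sum_i(b_i^{(n)})^6=O(n)$ does the work; once it is in place the conclusion is immediate.
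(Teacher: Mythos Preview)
Your proof is correct and follows essentially the same approach as the paper: the paper expands $f$ in Rademacher functions $r_p$ (your $\eta_i$ are just $-r_{i-1}/2$), obtains the same explicit coefficients for $\sum_{k=1}^n f(2^k x)$, factorizes the moment generating function as a product of hyperbolic cosines, and then Taylor-expands $\log\cosh$ to identify the quadratic and quartic terms. Your treatment is slightly more streamlined in that you abstract the coefficient sums $\sum_i (b_i^{(n)})^{2m}=n+O(1)$ rather than evaluating each geometric series explicitly, but the argument is the same.
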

As before,  the extended central limit theorem (see Theorem 4.3.1 in \cite{FMN16}) and moderate deviations (see Theorem 4.2.1 in \cite{FMN16}) are immediate consequences. 
\begin{theorem}[Extended CLT for $f(x) = x -\lfloor  x \rfloor - \half $] For  $y=o\left(n^{1/4}\right)$,
\begin{equation*}
\left| \left\lbrace  x \in [0,1]; \frac{2 S^{f}_n(x)}{\sqrt{n}} \geq y \right\rbrace  \right| \to \left(1+o(1)\right) \frac{1}{\sqrt{2 \pi}} \int_{y}^{\infty} e^{-\frac{u^2}{2}} du=\frac{e^{-\frac{y^2}{2}}}{y\sqrt{2\pi}}\left(1+o(1)\right).
\end{equation*}
\end{theorem}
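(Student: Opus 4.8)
The plan is to deduce this statement directly from the mod-Gaussian convergence already established in Theorem \ref{mod-Gauss specf}, by feeding it into the extended central limit theorem for mod-Gaussian sequences (Theorem 4.3.1 in \cite{FMN16}) and then rewriting the resulting Gaussian tail in closed form. Write $X_n$ for the sequence of Theorem \ref{mod-Gauss specf}, which converges mod-Gaussian on all of $\C$ with parameters $t_n=\frac{\sqrt n}{4}$ and limiting function $\psi(z)=e^{-z^4/192}$. The first step is to recognise that the quantity controlled by the statement is the standardised sum $X_n/\sqrt{t_n}$: since $\sqrt{t_n}=\half\,n^{1/4}$, one has $X_n/\sqrt{t_n}=\frac{2}{\sqrt n}\sum_{k=1}^n f(2^k x)$, whose variance tends to $1$, so that $X_n/\sqrt{t_n}$ obeys the ordinary central limit theorem. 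Because the underlying probability space is $[0,1]$ with Lebesgue measure, every probability $\P[\,\cdot\,]$ below is to be read as $\big|\{x\in[0,1];\ \cdot\,\}\big|$.

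The second step is to apply Theorem 4.3.1 of \cite{FMN16} to $X_n$. For a mod-Gaussian sequence with a fixed analytic limiting function, that theorem describes the right tail of $X_n/\sqrt{t_n}$ and gives, for $y\to\infty$ inside the normality zone,
\begin{align*}
\P\!\left[\frac{X_n}{\sqrt{t_n}}\ge y\right]=\big(1+o(1)\big)\,\frac{1}{\sqrt{2\pi}}\int_y^\infty e^{-u^2/2}\,du .
\end{align*}
The decisive third step is to pin down that normality zone, i.e. the range of $y$ in which the Gaussian approximation holds. The correction contributed by $\psi$ at a deviation of order $y$ is governed by the value $\psi\big(y/\sqrt{t_n}\big)=e^{-y^4/(192\,t_n^2)}$, which tends to $1$ precisely when $y^4=o(t_n^2)$, that is when $y=o(\sqrt{t_n})$. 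Since $\sqrt{t_n}=\half\,n^{1/4}$, this is exactly the hypothesis $y=o(n^{1/4})$ of the statement, and $n^{1/4}$ is the scale at which the quartic term of $\psi$ becomes non-negligible and the Gaussian approximation for the tail breaks down. The evenness of $\psi$ (the vanishing of its cubic Taylor coefficient) is what extends the zone out to order $\sqrt{t_n}$.

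It then remains to turn the Gaussian tail into the explicit form stated, which is the classical Mills-ratio asymptotic
\begin{align*}
\frac{1}{\sqrt{2\pi}}\int_y^\infty e^{-u^2/2}\,du=\frac{e^{-y^2/2}}{y\sqrt{2\pi}}\big(1+o(1)\big),\qquad y\to\infty.
\end{align*}
Combined with the previous display this yields both equalities in the statement. The only genuinely delicate point is the second step: one must check that the hypotheses of Theorem 4.3.1 in \cite{FMN16} are in force (mod-Gaussian convergence on a strip whose interior meets the real axis, here the whole of $\C$, which is exactly what Theorem \ref{mod-Gauss specf} supplies) and, above all, that the admissible range of $y$ produced by that theorem really is $o(\sqrt{t_n})$ for a fixed limiting function $\psi$, rather than the narrower window that would appear if the limiting function varied with $n$. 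Tracking this through the factor $\psi\big(y/\sqrt{t_n}\big)$ to obtain the sharp zone $o(\sqrt{t_n})=o(n^{1/4})$ is where the work concentrates; the remaining estimates are routine.
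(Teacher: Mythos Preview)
Your proposal is correct and is exactly the approach taken in the paper: the statement is declared there to be an immediate consequence of Theorem~\ref{mod-Gauss specf} via Theorem~4.3.1 in \cite{FMN16}, with no further argument given. Your identification $X_n/\sqrt{t_n}=\frac{2S_n^f}{\sqrt{n}}$ and the determination of the normality zone $y=o(\sqrt{t_n})=o(n^{1/4})$ are precisely what is needed to unpack that citation.
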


\begin{theorem}[Moderate deviations for $f(x) = x -\lfloor  x \rfloor - \half $ ] 
For $y>0$,
\begin{equation*}
\left| \left\lbrace  x \in [0,1]; S^{f}_n(x) \geq \frac{n^{3/4} y}{4} \right\rbrace  \right|  =\frac{e^{-\frac{\sqrt{n}}{4}\frac{y^2}{2}}}{y\sqrt{\frac{\pi \sqrt{n}}{2} }}\psi(y)\left(1+o(1)\right),
\end{equation*}
and for $y < 0$,
\begin{equation*}
\left| \left\lbrace  x \in [0,1]; S^{f}_n(x) \leq \frac{n^{3/4} y}{4} \right\rbrace  \right|  =\frac{e^{-\frac{\sqrt{n}}{4}\frac{y^2}{2}}}{|y|\sqrt{\frac{\pi \sqrt{n}}{2} }}\psi(y)\left(1+o(1)\right).
\end{equation*}
\end{theorem}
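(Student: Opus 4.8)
The plan is to obtain the statement as a direct application of the precise moderate deviation result for mod-Gaussian convergence (Theorem 4.2.1 in \cite{FMN16}) to the convergence already established in Theorem \ref{mod-Gauss specf}. Since Lebesgue measure on $[0,1]$ is exactly the reference probability measure, I first rewrite $\left|\left\{x\in[0,1];\,S^f_n(x)\geq \frac{n^{3/4}y}{4}\right\}\right|=\P\left[S^f_n\geq \frac{n^{3/4}y}{4}\right]$, and I set $X_n:=S^f_n/n^{1/4}=\frac{1}{\sqrt{n}}\sum_{k=1}^n f(2^k x)$. This $X_n$ is precisely the sequence for which Theorem \ref{mod-Gauss specf} asserts mod-Gaussian convergence in the sense of Definition \ref{mod-Gauss} on $\C$, with parameters $t_n=\frac{\sqrt{n}}{4}$ and limiting function $\psi(z)=e^{-z^4/192}$. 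As this convergence holds on all of $\C$, it holds on any vertical strip around the real axis, so the hypotheses of the moderate deviation theorem are met.

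Next I would match the deviation event. Because $t_n y=\frac{\sqrt{n}}{4}y$, one has $\{X_n\geq t_n y\}=\{S^f_n/n^{1/4}\geq \frac{\sqrt{n}}{4}y\}=\{S^f_n\geq \frac{n^{3/4}}{4}y\}$, so the two formulations describe the same set. For fixed $y>0$, Theorem 4.2.1 in \cite{FMN16} yields
\[
\P[X_n\geq t_n y]=\frac{\psi(y)}{y\sqrt{2\pi t_n}}\,e^{-t_n y^2/2}\,(1+o(1)).
\]
Substituting $t_n=\frac{\sqrt{n}}{4}$ gives $e^{-t_n y^2/2}=e^{-\frac{\sqrt{n}}{4}\frac{y^2}{2}}$ and $\sqrt{2\pi t_n}=\sqrt{\frac{\pi\sqrt{n}}{2}}$, which reproduces the claimed formula for $y>0$.

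For $y<0$ I would exploit the symmetry of the law of $X_n$. The sawtooth $f(x)=x-\lfloor x\rfloor-\half$ is odd almost everywhere, so $f(2^k(-x))=-f(2^k x)$ and hence $X_n(-x)=-X_n(x)$; since the map $x\mapsto -x$ (that is, $x\mapsto 1-x$ on $[0,1]$) preserves Lebesgue measure, $X_n$ and $-X_n$ share the same distribution. Consequently $\P[X_n\leq t_n y]=\P[X_n\geq t_n|y|]$, and since $\psi(z)=e^{-z^4/192}$ is even, the positive case applied with $|y|$ delivers the second formula. The argument is essentially bookkeeping, and there is no substantial obstacle; the only points demanding care are tracking the two successive normalizations so that $X_n=S^f_n/n^{1/4}$ is correctly identified and the event matched to $\{X_n\geq t_n y\}$, and checking that the regime of fixed $y$ lies inside the validity range of Theorem 4.2.1 in \cite{FMN16}, which it does since $t_n\to\infty$ while $y$ remains bounded.
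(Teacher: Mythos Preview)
Your proposal is correct and takes essentially the same approach as the paper, which simply states that the moderate deviation theorem is an ``immediate consequence'' of Theorem \ref{mod-Gauss specf} via Theorem 4.2.1 in \cite{FMN16}, without writing out any details. Your only addition is the symmetry argument for $y<0$; this is fine, though Theorem 4.2.1 in \cite{FMN16} already handles the left tail directly, so the symmetry step is not strictly needed.
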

In Section \ref{Section:5} we show that we have a zone of control i.e. the conditions \ref{Z1} and \ref{Z2} are satisfied. As a result, the following two theorems hold.
\begin{theorem}[LLT for $f(x) = x -\lfloor  x \rfloor - \half $]\label{spec-LLT}
Let $y \in \R$ and $B$ be a fixed Jordan measurable subset with $|B|>0.$ Then for every exponent $\delta \in  \left(0,\frac{13}{24}\right),$
one has 
 \begin{equation*}
\lim_{n\to\infty}\left(\frac{\sqrt{n}}{4}\right)^{\d}\left| \left \lbrace x \in [0,1]; \frac{2 S_n^f(x)}{\sqrt{n}}-y \in\left(\frac{4}{\sqrt{n}}\right)^\d B \right \rbrace \right|=\frac{|B|}{\sqrt{2\pi}}.
\end{equation*}
\end{theorem}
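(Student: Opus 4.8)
The plan is to read the asserted equality off Theorem~\ref{llt}, the abstract local limit theorem, and to reduce the whole proof to checking a zone of control for the mod-Gaussian sequence of Theorem~\ref{mod-Gauss specf},
\begin{align*}
X_n := \frac{S_n^f}{n^{1/4}}, \qquad t_n = \frac{\sqrt n}{4}, \qquad \psi(z) = e^{-z^4/192}.
\end{align*}
First I would record the purely algebraic identifications. Since $\sqrt{t_n} = \tfrac{1}{2}n^{1/4}$, the factor $S_n^f$ cancels and $\tfrac{X_n}{\sqrt{t_n}} = \tfrac{2 S_n^f}{\sqrt n}$, while $\left(\tfrac{\sqrt n}{4}\right)^{\d} = t_n^{\d}$ and $\left(\tfrac{4}{\sqrt n}\right)^{\d} = t_n^{-\d}$. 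Hence the limit claimed in Theorem~\ref{spec-LLT} is literally
\begin{align*}
\lim_{n\to\infty} t_n^{\d}\,\P\!\left[\frac{X_n}{\sqrt{t_n}} - y \in \frac{1}{t_n^{\d}}B\right] = \frac{|B|}{\sqrt{2\pi}},
\end{align*}
which is exactly the conclusion of Theorem~\ref{llt}, valid there for every $\d\in(0,\g+\half)$. As the target range is $\d\in(0,\tfrac{13}{24})$, it suffices to produce a zone of control for $(X_n)_n$ with exponent $\g = \tfrac{13}{24}-\half = \tfrac{1}{24}$; everything then reduces to verifying conditions~\ref{Z1} and~\ref{Z2} with this $\g$.

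Note that the mod-Gaussian convergence of Theorem~\ref{mod-Gauss specf} is only a locally uniform, rate-free statement on $\C$, so it does not by itself supply a zone of control, which demands a quantitative bound on a window growing like $t_n^{\g}$ with constants independent of $n$. To obtain this I would revisit the estimate underlying Theorem~\ref{mod-Gauss specf} and make its error term explicit. Expanding $f$ in its Fourier series $f(x) = -\sum_{j\ge 1}\frac{\sin(2\pi j x)}{\pi j}$, the frequencies occurring in $f(2^k x)$ are the numbers $j\,2^k$, and the moments of the underlying sum $\sum_{k=1}^n f(2^k x)$ are governed entirely by the dyadic coincidences $j\,2^k = j'\,2^{k'}$; summing these is what produces the variance $t_n = \tfrac{\sqrt n}{4}$ and the quartic limit $\psi(z)=e^{-z^4/192}$. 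The measure-preserving symmetry $x\mapsto 1-x$ sends $f(2^k x)$ to $-f(2^k x)$, so $X_n$ is symmetric and all its odd cumulants vanish; the cumulant expansion of $\log\varphi_n(i\l)$ therefore runs over even powers only, with $\kappa_2(X_n)=t_n+O(t_n^{-1})$, $\kappa_4(X_n)\to\kappa_4^\infty$ where $\tfrac{\kappa_4^\infty}{24}=-\tfrac{1}{192}$, and $\kappa_{2j}(X_n)=O(t_n^{2-j})$ for $j\ge 3$. Since the leading non-Gaussian contribution is of quartic order, I would establish the bound required by~\ref{Z1} on the zone $[-D t_n^{1/24},D t_n^{1/24}]$ with $w=4$; condition~\ref{Z2} is then immediate, because $w=4\ge 2$, $\tfrac{1}{w-2}=\half\ge\g=\tfrac{1}{24}\ge-\half$, and $D$ may be shrunk so that $D\le(4K_2)^{-1/2}$.

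The main obstacle is the uniform control, over the \emph{growing} window $|\l|\le D t_n^{1/24}$, of the higher-order cumulants $\kappa_{2j}(X_n)$ with $j\ge 3$ together with the remainder of the cumulant series, with constants $K_1,K_2$ that do not depend on $n$. This is precisely where the combinatorics of the dyadic coincidences $j\,2^k=j'\,2^{k'}$ must be exploited: one has to count the solutions feeding each $\kappa_{2j}$, estimate the resulting contribution, and sum over $j$. It is the balance between the decay $t_n^{2-j}$ of these cumulants and the growth $t_n^{2j\g}$ of the monomials $|\l|^{2j}$ on the zone that fixes the admissible exponent at $\g=\tfrac{1}{24}$. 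Once this estimate is secured, conditions~\ref{Z1} and~\ref{Z2} hold and Theorem~\ref{llt} yields Theorem~\ref{spec-LLT} on the whole range $\d\in(0,\tfrac{13}{24})$ without further work.
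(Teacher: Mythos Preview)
Your reduction is exactly right: the statement is nothing but Theorem~\ref{llt} applied to the mod-Gaussian sequence of Theorem~\ref{mod-Gauss specf}, with $t_n=\sqrt{n}/4$, and everything comes down to exhibiting a zone of control with $\gamma=\tfrac{1}{24}$. This matches the paper's own (very short) argument.

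Where you diverge from the paper is in the mechanism you propose for obtaining the zone of control. You want to pass through the Fourier expansion $f(x)=-\sum_{j\ge 1}\frac{\sin(2\pi j x)}{\pi j}$ and control cumulants by counting dyadic coincidences $j\,2^k=j'\,2^{k'}$. That route is in principle workable, but it is substantially harder than what the paper actually does, and you acknowledge yourself that the ``main obstacle'' --- uniform control of all higher cumulants on the growing window --- is left open. Two remarks: first, for moments of order $\ge 4$ the integrals of products of sines are governed not by pairwise coincidences $j\,2^k=j'\,2^{k'}$ but by signed relations $\sum_i \varepsilon_i j_i 2^{k_i}=0$, so the combinatorics is closer to the Diophantine counting of Section~\ref{Section:3} than to a simple coincidence count; second, because $\kappa_2(X_n)\neq t_n$ exactly, the leading correction is of order $\lambda^2$, not $\lambda^4$, which is why the paper records $v=2$.

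The paper avoids all of this by reusing the Rademacher expansion already written out in the proof of Theorem~\ref{mod-Gauss specf}:
\[
\sum_{k=1}^n f(2^k x)=-\sum_{p=2}^{n+1} r_p(x)\Big(\tfrac12-\tfrac1{2^p}\Big)-\sum_{p>n+1} r_p(x)\Big(\tfrac1{2^{p-n}}-\tfrac1{2^p}\Big),
\]
which displays $X_n$ as a weighted sum of \emph{independent} Rademacher variables. The moment generating function is then an explicit product of hyperbolic cosines, its logarithm has a completely explicit Taylor expansion, and the zone-of-control bound follows from the elementary inequality $|e^{u}-1|\le |u|e^{|u|}$ applied to the remainder --- exactly the device used in the proof of Theorems~\ref{walsh-LLT} and~\ref{walsh-speed}. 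No Diophantine counting is needed at all. In short: your skeleton is correct, but you are reaching for the trigonometric machinery of Section~\ref{Section:3} when the independence built into the Rademacher representation makes the required estimate almost immediate.
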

\begin{theorem}[Speed of convergence for $f(x) = x -\lfloor  x \rfloor - \half $]\label{spec-speed}
One has
\begin{align*}
d_{Kol}\left(\frac{2S^{f}_n}{\sqrt{n}}, \mathcal{N}(0,1)\right)\le C \left(\frac{4}{\sqrt{n}} \right)^{\frac{13}{24}},
\end{align*}
where  $C$ is a constant that can be calculated explicitly (see Theorem 2.16 in \cite{FMN17}).
\end{theorem}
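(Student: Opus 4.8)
The plan is to read the bound off the general speed-of-convergence estimate for mod-Gaussian sequences equipped with a zone of control, namely Theorem 2.16 in \cite{FMN17}: if $(X_n)_{n\in\N}$ converges mod-Gaussian with parameters $t_n$ and admits a zone of control $[-Dt_n^\g,Dt_n^\g]$ with index $(v,w)$, then
\begin{align*}
d_{Kol}\left(\frac{X_n}{\sqrt{t_n}},\Nc(0,1)\right)\le \frac{C}{t_n^{\,\g+\frac12}},
\end{align*}
with $C$ depending only on $v,w,K_1,K_2,D$. By Theorem \ref{mod-Gauss specf} the sequence $X_n=\frac1{n^{1/4}}\sum_{k=1}^n f(2^kx)$ converges mod-Gaussian with $t_n=\frac{\sqrt n}{4}$ and $\psi(z)=e^{-z^4/192}$, and its standardisation $X_n/\sqrt{t_n}=\frac{2}{\sqrt n}\sum_{k=1}^n f(2^kx)$ is exactly the variable whose Kolmogorov distance to $\Nc(0,1)$ is being estimated. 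Thus the task reduces to exhibiting a zone of control for this sequence with the correct exponent $\g=\frac1{24}$; substituting $t_n=\frac{\sqrt n}{4}$ and $\g+\half=\frac{13}{24}$ then yields $d_{Kol}\le C\,(\sqrt n/4)^{-13/24}=C\,(4/\sqrt n)^{13/24}$, which is the claim.

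To produce the zone of control I would quantify the convergence in Theorem \ref{mod-Gauss specf} through the cumulants of $X_n$. The key structural fact is that $f(x)=x-\lfloor x\rfloor-\half$ is odd under the measure-preserving reflection $x\mapsto 1-x$, under which $X_n$ transforms into $-X_n$; hence the law of $X_n$ is symmetric and all its odd cumulants vanish, in particular $\kappa_3(X_n)=0$, so no cubic term obstructs the Gaussian approximation. A direct Parseval computation from the Fourier series $f(x)=-\sum_{m\ge1}\frac{\sin(2\pi mx)}{\pi m}$ gives $\int_0^1 f(2^jx)f(2^kx)\,dx=\frac1{12}\,2^{-|k-j|}$, so that $\Var(X_n)=\frac{\sqrt n}{4}+o(1)$, consistent with $t_n=\frac{\sqrt n}{4}$, while $\kappa_4(X_n)$ converges to the value $-\frac18$ forced by $\psi$ (note $-\frac18\cdot\frac1{24}=-\frac1{192}$).

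The remaining, substantive step is to estimate the higher cumulants $\kappa_r(X_n)$, $r\ge4$, and thereby control the tail of the cumulant expansion of $\log\E[e^{i\l X_n}]$ — whose quadratic term equals $-t_n\frac{\l^2}2$ up to a vanishing correction and whose cubic term is zero — uniformly for $\l\in[-Dt_n^\g,Dt_n^\g]$, converting it into the bound $\big|\E[e^{i\l X_n}]e^{-t_n\l^2/2}-1\big|\le K_1|\l|^{v}e^{K_2|\l|^{w}}$ required by \ref{Z1}. The geometric decay of the correlations above, together with the dyadic martingale structure underlying Theorem \ref{CLT_KAC_general}, is what lets one group and estimate the joint cumulants of the dependent summands $f(2^kx)$, producing bounds of the order $|\kappa_r(X_n)|\lesssim n^{1-r/4}$ that render the series summable and small on the zone. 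The main obstacle is exactly this uniform estimate: since the summands are not independent the cumulants do not factorise, and one must keep track of the combinatorial constants precisely enough to exhibit an admissible index $(v,w)$ obeying $w\ge2$ and $D\le\left(\frac{1}{4K_2}\right)^{1/(w-2)}$, and to certify that the exponent $\g=\frac1{24}$ (for which $\g+\half=\frac{13}{24}$ matches the range $\d\in(0,\frac{13}{24})$ of Theorem \ref{spec-LLT}) lies in the regime where \ref{Z1} holds. With \ref{Z1} and \ref{Z2} secured for these parameters, Theorem 2.16 of \cite{FMN17} delivers the asserted Kolmogorov bound.
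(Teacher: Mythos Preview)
Your overall strategy matches the paper's exactly: verify conditions \ref{Z1}--\ref{Z2} with $\gamma=\tfrac{1}{24}$ and then invoke Theorem 2.16 of \cite{FMN17} with $t_n=\tfrac{\sqrt n}{4}$, so that $t_n^{-(\gamma+1/2)}=(4/\sqrt n)^{13/24}$.

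Where you diverge is in how you propose to obtain the zone of control. You treat the summands $f(2^kx)$ as genuinely dependent and plan to bound the joint cumulants $\kappa_r(X_n)$ via correlation decay and martingale structure, admitting that ``since the summands are not independent the cumulants do not factorise''. The paper sidesteps this entirely. Its proof of Theorem \ref{mod-Gauss specf} uses the Rademacher expansion
\[
f(x)=x-\lfloor x\rfloor-\tfrac12=-\sum_{l\ge1}\frac{r_l(x)}{2^{l+1}},\qquad r_l(2^kx)=r_{l+k}(x),
\]
to rewrite $\sum_{k=1}^n f(2^kx)$ as a series $\sum_{p\ge2}c_p\,r_p(x)$ in \emph{independent} Rademacher functions with explicit coefficients $|c_p|\le\tfrac12$. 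The moment generating function is then literally a product of $\cosh$'s, and the zone-of-control estimate is obtained exactly as in the proof of Theorems \ref{walsh-LLT}--\ref{walsh-speed} for $q\ge2$: expand $\log\cosh$, read off $v=2$, $w=4$, $\gamma=\tfrac{1}{24}$. No combinatorics of dependent cumulants is needed.

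Your cumulant route is not wrong in principle --- in fact the Rademacher representation shows immediately that $\kappa_r(X_n)=\kappa_r(r_0)\sum_p (c_p/n^{1/4})^r\asymp n^{1-r/4}$ for even $r$ and $0$ for odd $r$, confirming your claimed bounds --- but your proposed justification (``geometric decay of correlations together with the dyadic martingale structure'') is vague and would require real work to make precise, whereas the Rademacher trick gives the same bounds for free. That trick is the one idea you are missing.
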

\section{Proof of Theorem \ref{tryg}} \label{Section:3}
\begin{lemma} \label{lemma:3.1}
Let $(m_k)_{k\geq 1}$ be a sequence satisfying the condition (\ref{lacunary}) with $1 < q \leq 2.$ We denote by  $C_r(l,p,q,n)$ the number of solutions of the equation
\begin{align}
\varepsilon_1 m_{k_1} \pm \varepsilon_2 m_{k_2} \pm \cdots \pm \varepsilon_l m_{k_l} = A, \label{ptryg-equation} 
\end{align} 
where $1 \leq k_l < \dots < k_1 \leq n, l \in \mathbb{N}$ and $A \in \mathbb{Z}.$ Moreover, 
\begin{align*}
 \varepsilon_i \in \lbrace 1, 2, 3, \dots, r \rbrace
\end{align*} 
for all $i \geq 1, r \geq 1$ and $p \leq l$ is the number of $\varepsilon_i$'s that are different from $1.$ We claim that $C_r(l, p, q,n)$ is bounded by $\left(8 n \log_q(rl) \log_q \left(\frac{2r^2lq}{(q-1)^2}\right) \right)^{\frac{l+p}{3}}.$ 
\end{lemma}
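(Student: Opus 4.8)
The plan is to count solutions by running through the frequencies in decreasing order and exploiting that lacunarity forces the magnitudes to drop geometrically, so that only a few indices can inhabit any fixed multiplicative window. I would normalize each solution as $\sum_{i=1}^{l}\delta_i\varepsilon_i m_{k_i}=A$ with $k_1>k_2>\cdots>k_l$ and $\delta_i\in\{-1,+1\}$, so that $m_{k_1}$ is the dominant frequency. First I would record the geometric tail bound
\begin{align*}
\Big|\sum_{i\geq j}\delta_i\varepsilon_i m_{k_i}\Big|\leq r\sum_{i\geq j}m_{k_i}\leq r\,m_{k_j}\,\frac{q}{q-1},
\end{align*}
which already accounts for the factors $\frac{q}{q-1}$ (and, through a weighted tail $\sum_t t\,q^{-t}=\frac{q}{(q-1)^2}$, the factor $\frac{q}{(q-1)^2}$) appearing in the statement. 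The second ingredient is an elementary \emph{window lemma}: for $M>0$ and $\rho>1$ the number of indices $k\leq n$ with $m_k\in[M,\rho M]$ is at most $\log_q\rho+1$, since $k<k'$ in one window forces $q^{k'-k}\leq m_{k'}/m_k\leq\rho$; allowing a coefficient $\varepsilon\in\{1,\dots,r\}$ multiplies this by at most $r$. The two window ratios $rl$ and $\frac{2r^2lq}{(q-1)^2}$ are exactly the scales produced by the crude mass bound $\sum_i\varepsilon_i m_{k_i}\leq r\,l\,m_{k_1}$ and by the sharper tail estimate above.

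Next I would set up an induction on $l$ driven by a dichotomy on the leading term. Either $m_{k_1}$ is \emph{not} cancelled, meaning $|A|$ is comparable to $m_{k_1}$; then $m_{k_1}$ lies in a fixed multiplicative window determined by $A$ and is localized to $O(\log_q(rl))$ choices by the window lemma, after which one replaces $A$ by $A-\delta_1\varepsilon_1 m_{k_1}$ and recurses on $l-1$ terms. Or $m_{k_1}$ \emph{is} cancelled, in which case the tail bound forces the next few frequencies to sit within a bounded multiplicative window of $m_{k_1}$, so that once $k_1$ is chosen freely (a factor $n$) its partners are again localized by the window lemma. The signs $\delta_i$ contribute the powers of $2$, assembled into the constant $8=2^3$ per processed chunk, and the coefficient values $\varepsilon_i\in\{2,\dots,r\}$ for the $p$ ``heavy'' terms are what promote the window ratios from $l$ to $rl$ and $r^2l$.

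The fractional exponent is where the accounting has to be sharp: a genuinely free index (a factor $n$) can only arise from a cancelling cluster, and cancelling a lacunary frequency against its neighbours consumes several terms at once, so each free choice must be amortized against the localization of the remaining terms in its cluster. Organizing the recursion so that each factor $8\,n\,\log_q(rl)\,\log_q\!\big(\frac{2r^2lq}{(q-1)^2}\big)$ is charged to a block of three units of weight --- counting one unit for every term and an extra unit for each of the $p$ heavy coefficients --- yields total weight $l+p$ and hence the claimed exponent $\frac{l+p}{3}$, the extra weight of the heavy terms reflecting the additional discrete freedom carried by $\varepsilon_i>1$.

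The hard part will be precisely this amortized bookkeeping: verifying, uniformly in the target $A$ and stably under the recursion, that one may always charge at most one free index and two logarithmic localizations per three units of weight, and that the ratios $rl$ and $\frac{2r^2lq}{(q-1)^2}$ are simultaneously wide enough to trap every non-pivot index yet narrow enough to keep the logarithmic factors. The restriction $1<q\leq2$ is essential here, since it is the delicate regime in which substantial cancellation among the terms genuinely occurs and must be confronted. Pinning down the explicit constant $8$ and the exact ratios --- rather than an unspecified $C(q,r)^{\,l+p}$ --- is the delicate point, while the window lemma and the tail bound supply the routine estimates once the combinatorial scheme is fixed.
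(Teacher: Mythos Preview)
Your outline has the right skeleton --- induction on $l$, a window lemma for lacunary sequences, and the idea that a free choice of an index must be amortized against several localized ones --- but the recursion you describe does not produce the exponent $\tfrac{l+p}{3}$, and the missing ingredient is precisely the one that makes it work in the paper.

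Your primary dichotomy is ``$m_{k_1}$ cancelled versus not cancelled'', with the not-cancelled branch localizing $m_{k_1}$ and recursing on $l-1$ terms. The paper never recurses on $l-1$; its primary split is on whether $\varepsilon_2$ is heavy ($\varepsilon_2\neq 1$) or light ($\varepsilon_2=1$). When $\varepsilon_2$ is heavy, two terms are peeled off (one free, one localized) and $p$ drops by one, so $l+p$ drops by $3$; when $\varepsilon_2$ is light, three terms are peeled off (one free, two localized) and $p$ is unchanged, so $l+p$ again drops by $3$. Only after this split does the paper sub-divide by the size of $m_{k_1}/m_{k_2}$ and $m_{k_2}/m_{k_3}$, which is where your cancelled/not-cancelled distinction enters. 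The reason the $\varepsilon_2$ split is indispensable is that with $\varepsilon_2=1$ a two-term cluster cannot cancel (lacunarity gives $m_{k_1}\geq q\,m_{k_2}>m_{k_2}$), so a minimal cancelling block genuinely needs three light terms; with $\varepsilon_2\geq 2$ one can have $m_{k_1}\approx\varepsilon_2 m_{k_2}$ and a two-term block suffices, the extra unit of weight coming from the heavy coefficient. Without this distinction your ``one free index per three units of weight'' claim is unjustified --- indeed the remark after the lemma exhibits a sequence with $p=l/2$ for which the count is of order $n^{l/2}$, so any scheme that ignores the heavy/light structure and aims for $n^{l/3}$ would overshoot.

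Two smaller points. Your guess that $(q-1)^{-2}$ arises from the weighted tail $\sum_t t\,q^{-t}$ is off: in the paper it is simply the product of two one-sided bounds $A>\tfrac{(q-1)}{2lr}m_{k_1}$ and $A<\tfrac{rq}{q-1}m_{k_1}$, obtained in the subcase $\varepsilon_2=1$, $m_{k_1}/m_{k_2}\leq lr$, $m_{k_2}/m_{k_3}\geq \tfrac{2lr}{q-1}$. And the base case $l=2$ already requires the heavy/light split to get the claimed bound, which is a useful sanity check for any proposed scheme.
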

\begin{remark}
In the case $r=1,$ Lemma \ref{lemma:3.1} states that the number of solutions of the equation
\begin{align}
m_{k_1} \pm  m_{k_2} \pm \cdots \pm m_{k_l} = A \label{tryg-equation} 
\end{align}
is at most $\left(8n \log_q(l) \log_q \left(\frac{2lq}{(q-1)^2}\right) \right)^{\frac{l}{3}}.$ For example, if the sequence $(m_k)_{k \in \N}$ is taken to be
\begin{align}\label{example}
\begin{cases} m_{2k} &= \quad 2^k,\\
m_{2k+1} &=  \quad 2^k + 2^{k-1}\\
\end{cases},
\end{align}
then it is a lacunary sequence with $q = 4/3.$ Furthermore, the number of solutions of Equation (\ref{tryg-equation}), $C_1(l,0,q,n),$ with $A=0$ is at least of order $n^{l/3}.$ Therefore, it is not possible to improve the factor $n^{l/3}$ appearing in the statement of Lemma \ref{lemma:3.1}. 

In the same vein, for the sequence $(m_k)_{k \in \N}$ constructed in (\ref{example}), taking $p=\frac{l}{2}$ and $r \geq 3,$ we observe that $C_r(l,l/2,q,n),$ is of order $n^{l/2}.$ Thus, the factor $n^{\frac{l+p}{3}}$ appearing in Lemma \ref{lemma:3.1} cannot be improved either.
\end{remark}
\begin{remark}
In \cite{E62}, it has been shown under more general lacunarity assumptions (\ref{Erdos_lac}) that the number of solutions of the equation 
\begin{align*}
m_{k_1} \pm  m_{k_2} \pm \cdots \pm m_{k_l} = A
\end{align*}
is at most $o(n^{l/2}),$ where it's additionally allowed $k_1 = \dots = k_l$ and the trivial solutions are excluded. 
\end{remark}

\begin{proof}[Proof of Lemma \ref{lemma:3.1}] Without loss of generality we assume $\varepsilon_1 =1.$ If $\varepsilon_1 \neq 1,$ we divide both sides of Equation \ref{tryg-equation} by $\varepsilon_1$ and instead of considering the cases $\varepsilon_2 = 1$ and $\varepsilon_2 \neq 1,$ we would discuss the cases $\varepsilon_2 \leq 1$ and $\varepsilon_2 > 1,$ respectively.

To prove the claim we use an induction on $l.$ We first discuss the case $l=2.$ We distinguish two cases.
\begin{itemize}[leftmargin=0.15in]
\item $\varepsilon_2 = 1:$ Note
\begin{align*}
 m_{k_1} \left(1+ \frac{1}{q}\right) \geq m_{k_1} \pm m_{k_2} =  A \geq  m_{k_1} \left(1-\frac{1}{q} \right)
\end{align*}
Thus $ A \frac{q}{q-1} \geq m_{k_1} \geq A \frac{q}{q+1},$ which with the lacunarity condition (\ref{lacunary}) implies that we can choose $m_{k_1}$ at most in finite $\log_q\frac{q+1}{q-1} $ ways. It remains to show $\log_q\left(\frac{q+1}{q-1}\right)  \leq \left(8 n \log_q(2) \log_q \left(\frac{4q}{(q-1)^2}\right) \right)^{2/3},$ which is equivalent to $\log \left(\frac{q+1}{q-1} \right)\, \log^{1/3} \left( q \right)  \leq \left(16 \log(2)\log \left(\frac{4q}{(q-1)^2}\right) \right)^{2/3}.$ \, Since $\log \left(\frac{q+1}{q-1} \right) \log \left(q\right) < 2$ and  $\frac{q+1}{q-1} <\frac{4q}{(q-1)^2}  $ for $1<q \leq 2,$ we deduce 
\begin{align*}
\log \left(\frac{q+1}{q-1} \right)\, \log^{1/3} \left( q \right) < 2^{1/3}\log^{2/3} \left(\frac{q+1}{q-1} \right) <   \left(16 \log(2)\log \left(\frac{4q}{(q-1)^2}\right) \right)^{2/3}.
\end{align*}
\item $\varepsilon_2 \neq 1:$ We intend to show that the number of solutions is  at most $8n \log_q(2r) \log_q \left(\frac{4r^2q}{(q-1)^2}\right) .$  We can choose $m_{k_1}$ at most in $n$ ways and we intend to show that $m_{k_2}$ can be chosen at most in $\log_q(r/2)$ ways. Since $m_{k_1}$ has been already chosen, we have $\varepsilon_2 m_{k_2} = \pm \left(A - m_{k_1} \right). $ Let $\varepsilon'_2, m_{k'_2}$ be another pair satisfying $\varepsilon'_2 m_{k'_2} =\pm \left( A - m_{k_1} \right) $ and without loss of generality we assume $m_{k_2} > m_{k'_2}.$ As a result, we obtain
\begin{align*}
1=\frac{\varepsilon_2 m_{k_2}}{\varepsilon'_2 m_{k'_2}} \geq \frac{2q^{k_2 - k'_2}}{r}.
\end{align*}
Therefore, when $m_{k_1}$ has been already chosen, $m_{k_2}$ can be chosen at most in $\log_q(r/2)$ ways, resulting at most $8n \log_q(r/2) < 8n \log_q(2r) \log_q \left(\frac{4r^2q}{(q-1)^2}\right) $ solutions.
\end{itemize}
We now treat the general case. We suppose that the claim is true for all $l^\prime < l$ and we aim to show that it holds for $l.$ Moreover, we assume that the number of $\varepsilon_i$ different from $1$ is equal to $p.$ We first distinguish two cases.
\begin{itemize}[leftmargin=0.15in]
\item $\varepsilon_2 \neq  1:$ Two further possibilities need to be discussed.
\begin{itemize}[leftmargin=0.15in] 
\item $\frac{m_{k_1}}{m_{k_2}} \leq l r :$ We choose $m_{k_1}$ in $n$ ways. Therefore $m_{k_2}$ can be chosen in  $\log_q \left( lr\right)$ ways, resulting $n \log_q \left( lr\right)$ total possibilities for $m_{k_1}$ and $m_{k_2}.$
\item $\frac{m_{k_1}}{m_{k_2}} > lr:$ We have 
\begin{align}\label{gen_bounds}
2 r  m_{k_1} > m_{k_1} \left( 1+ \frac{l-1}{l}\right)> m_{k_1} + \cdots + \varepsilon_l m_{k_l} = A > m_{k_1} \left( 1 - \frac{l-1}{l}\right) = \frac{m_{k_1}}{l},
\end{align}
which implies that $m_{k_1}$ can be chosen at most in finite $\log_q \left( 2rl\right)$ ways. Choosing $m_{k_2}$ in $n$ ways gives us at most $n \log_q \left( 2rl\right)$ possibilities to choose $m_{k_1}$ and $m_{k_2}.$

\end{itemize}
We conclude that in the case $\varepsilon_2 \neq 1,$ $m_{k_1}$ and $m_{k_2}$ can be chosen at most in $2 n \log_q(2rl)$ ways which  gives the following bound for the number of solutions of Equation (\ref{ptryg-equation}), in case of  $\varepsilon_2 \neq 1,$
\begin{align}
2n \, \log_q (2rl) \, C_r(l-2,p-1,q,n)  .\label{b:1} 
\end{align}
\item $\varepsilon_2 = 1:$ 
We consider two cases.
\begin{itemize}[leftmargin=0.15in]
\item $\frac{m_{k_1}}{m_{k_{2}}} > lr:$  Note that the bounds (\ref{gen_bounds}) hold also in this case.  Thus, $m_{k_1}$ can be chosen in $\log_q(2lr)$ ways.  We discuss two further cases. 
\begin{itemize}[leftmargin=0.15in]
\item $\frac{m_{k_2}}{m_{k_{3}}} > lr$: Similar to $m_{k_1}$ we have $2 r m_{k_2}  > \pm \left(A - m_{k_1} \right) > \frac{2 m_{k_2}}{l}, $ hence if $m_{k_1}$ has been already chosen, $m_{k_2}$ can be chosen at most in $\log_q( r l)$ ways.  Moreover, we can choose $m_{k_3}$ at most in $n$ ways. 
\item $\frac{m_{k_2}}{m_{k_{3}}} \leq l r$: In this case $m_{k_2}$ can be chosen in $n$ ways and $m_{k_3}$ in $\log_q(l r)$ ways.
 \end{itemize}
 Hence, the following bound holds for the number of solutions of Equation (\ref{ptryg-equation}) for $\frac{m_{k_1}}{m_{k_2}} > lr$ and $\varepsilon_2=1.$
\begin{align}
2 n \log_q\left( 2 r l\right) \, \log_q\left( r l\right) \, C_r(l-3, p,q, n) \label{b:2}
\end{align} 
\item $\frac{m_{k_1}}{m_{k_2}} \leq l r$: We discuss two further possibilities.
\begin{itemize}[leftmargin=0.15in]
\item $\frac{m_{k_2}}{m_{k_3}} < \frac{2l r}{q-1}:$  We can choose $m_{k_1}$ at most in $n$ ways. Moreover,  $q^{k_1-k_2}\leq \frac{m_{k_1}}{m_{k_2}} \leq lr$, thus $k_1-k_2 \leq \log_q (lr).$ So if $m_{k_1}$ has been already chosen, $m_{k_2}$ can be chosen in $\log_q (lr)$ ways. In the same vein, we obtain that $m_{k_3}$ can be chosen in $\log_q \left(\frac{2l r}{q-1} \right)$ ways. We conclude that $m_{k_1}, m_{k_2}, m_{k_3}$ can be chosen at most in $n \log_q\left(l r \right) \log_q \left(\frac{2l r}{q-1} \right)$ ways.
\item $\frac{m_{k_2}}{m_{k_3}} \geq \frac{2l r}{q-1}:$ Note that using Equation (\ref{ptryg-equation}), we get
\begin{align*}
A \geq m_{k_2} (q-1) - \varepsilon_3 m_{k_3} - \cdots - \varepsilon_l m_{k_l} > m_{k_2} (q-1) - lr m_{k_3} > \frac{m_{k_2} (q-1)}{2} > \frac{m_{k_1} (q-1)}{2 lr}.
\end{align*} 
 On the other hand, 
\begin{align*}
A \leq r m_{k_1} \left(1 + \frac{1}{q} + \cdots + \frac{1}{q^{l-1}}\right) < m_{k_1} \frac{rq}{q-1}.
\end{align*}
We deduce there are at most $\log_q\left( \frac{2lqr^2}{(q-1)^2}\right)$ ways to choose $m_{k_1}$ and $\log_q(lr)$ ways to choose $m_{k_2},$ as $\frac{m_{k_1}}{m_{k_2}} \leq lr.$  Finally, $m_{k_3}$ can be chosen at most in $n$ ways. Thus, $m_{k_1}, m_{k_2}, m_{k_3}$ can be chosen in $n \log_q\left(lr \right)\log_q\left( \frac{2lqr^2}{(q-1)^2}\right)$ ways.  
\end{itemize}
We obtain that the number of solutions of Equation (\ref{ptryg-equation}) for $\frac{m_{k_1}}{m_{k_{2}}} \leq lr$ and $\varepsilon_2 = 1$ is bounded by
\begin{align}
2 n \log_q\left(lr \right)\log_q\left( \frac{2lqr^2}{(q-1)^2}\right) C_r(l-3, p, q, n) \label{b:3}.
\end{align} 
\end{itemize}
\end{itemize}
Summing up the bounds (\ref{b:1}), (\ref{b:2}) and (\ref{b:3}) and using the induction hypothesis, we conclude 
\begin{align*}
C_r(l,p,q,n) <  \left(8 n \log_q(rl) \log_q \left(\frac{2r^2lq}{(q-1)^2}\right) \right)^{\frac{l+p}{3}}
\end{align*} 
and the proof of the lemma follows.
\end{proof}
Next we are interested in obtaining a result similar to Lemma \ref{lemma:3.1} for $q > 2.$ In fact, using Lemma \ref{lemma:3.1} we could show that the number of solutions for $q > 2$ is again of order $n^{\frac{l+p}{3}}.$ However, we aim to show that for $q > 2,$ this estimate can be improved allowing us to impose more general assumptions in Theorem \ref{tryg} for $q > 2.$

\begin{lemma} \label{lemma:3.2}
Let $(m_k)_{k\geq 1}$ be a sequence satisfying the lacunarity condition (\ref{lacunary}) with $q > 2.$ We denote by  $C_r(l,p_2, p_3,q,n)$ the number of solutions of the equation
\begin{align}
\varepsilon_1 m_{k_1} \pm \varepsilon_2 m_{k_2} \pm \cdots \pm \varepsilon_l m_{k_l} = A, \label{pqtryg-equation} 
\end{align} 
where $1 \leq k_l < \dots < k_1 \leq n, l \in \mathbb{N}$ and $A \in \mathbb{Z}.$ Moreover, 
\begin{align*}
 \varepsilon_i \in \lbrace 1, 2, 3, \dots, r \rbrace
\end{align*} 
for all $i \geq 1.$ Note that $p_2$ and $p_3$  are the number of $\varepsilon_i$'s such that $\varepsilon_i =2$ and $\varepsilon_i \geq 3,$ respectively. We claim that $C_r(l, p_2, p_3, q,n)$ is bounded by $\left(20 n  \log_q\left( 2l r \right) \log_q\left( \frac{ql r}{q-2} \right) \log_q\left( \frac{4l^2 q^2 r^3}{q-2} \right)  \right) ^{\frac{l}{4}+\frac{p_2}{4}+\frac{p_3}{2}}.$ 
\end{lemma}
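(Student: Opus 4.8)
The plan is to run the same inductive skeleton as in the proof of Lemma \ref{lemma:3.1}: an induction on $l$, after dividing Equation (\ref{pqtryg-equation}) through by $\varepsilon_1$ so that we may assume $\varepsilon_1 = 1$. The essential new ingredient is that $q > 2$ forces the leading term to dominate more strongly. Since $m_{k_1} \geq q\, m_{k_2} > 2\, m_{k_2}$, a coefficient $\varepsilon_2 = 2$ still leaves a margin $m_{k_1} - 2 m_{k_2} \geq (q-2)\, m_{k_2}$, whereas only a coefficient $\varepsilon_2 \geq 3$ can overturn the dominance of $m_{k_1}$. This is precisely why the $q-2$ factors appear in the denominators of the claimed bound, and why the three coefficient types $\varepsilon_i = 1$, $\varepsilon_i = 2$, $\varepsilon_i \geq 3$ must be weighted differently, in contrast to Lemma \ref{lemma:3.1} where all $\varepsilon_i \neq 1$ were treated uniformly.

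It is convenient to attach a weight $\frac{1}{4}$ to each index with $\varepsilon_i = 1$, a weight $\frac{1}{2}$ to each index with $\varepsilon_i = 2$, and a weight $\frac{3}{4}$ to each index with $\varepsilon_i \geq 3$; the claimed exponent $\frac{l}{4} + \frac{p_2}{4} + \frac{p_3}{2}$ is then exactly the total weight. Writing $M := 20 n \log_q(2lr)\log_q\!\big(\frac{qlr}{q-2}\big)\log_q\!\big(\frac{4 l^2 q^2 r^3}{q-2}\big)$, the induction is organised so that in every branch one resolves a block of leading indices — all but one pinned down in $\log_q(\cdots)$ ways using the domination of the leading term, the remaining one chosen freely among the $n$ available frequencies — that carries total weight at least $1$. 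The goal in each branch is a bound of the shape $n \cdot (\text{product of logarithmic factors}) \cdot C_r(\text{strictly smaller data})$ in which the logarithmic arguments are all dominated by those collected in $M$; then $n \cdot (\text{logs}) \le M$, the removed weight is at least $1$, and the inductive hypothesis closes the step.

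Concretely, I would split on the value of $\varepsilon_2$. When $\varepsilon_2 \geq 3$ (weight $\frac{3}{4}$), resolving $\{k_1,k_2\}$ removes weight $\frac{1}{4}+\frac{3}{4} = 1$, so I pin one of the two frequencies in $\log_q$ ways and choose the other in $n$ ways, reducing to $C_r(l-2, p_2, p_3-1)$. When $\varepsilon_2 = 2$ (weight $\frac{1}{2}$) or $\varepsilon_2 = 1$ (weight $\frac{1}{4}$), the pair $\{k_1,k_2\}$ has weight $<1$, so, just as in Lemma \ref{lemma:3.1}, I would further distinguish according to whether the leading ratios $m_{k_1}/m_{k_2}, m_{k_2}/m_{k_3},\dots$ exceed thresholds comparable to $lr$, $\frac{lr}{q-2}$ and $\frac{l^2 q r^2}{q-2}$. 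A large ratio lets the leading frequencies be pinned in $\log_q$ ways via the telescoping estimate $\sum_{i\ge 2}\varepsilon_i m_{k_i}\le r\,m_{k_2}\,\tfrac{q}{q-1}$ on the tail, while a small ratio ties two neighbouring frequencies together so that one of them becomes a $\log_q$-choice. I would continue walking down the block until weight at least $1$ has been consumed; in the worst case $\varepsilon_2=\varepsilon_3=\varepsilon_4=1$ this requires four indices, which is the source of the $\frac{l}{4}$ behaviour. At that point the induction hypothesis applies to the remaining data.

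I expect the main obstacle to be the bookkeeping in the $\varepsilon_2 \in \{1,2\}$ branches: one must enumerate the sub-cases on the ratio structure so that every factor of $n$ is paid for by removed weight at least $1$ without over-counting, and simultaneously verify that the several logarithmic arguments produced in these sub-cases (this is where $\frac{qlr}{q-2}$ and $\frac{4 l^2 q^2 r^3}{q-2}$ originate) are all bounded by the three logarithms appearing in $M$, so that the numerical constant $20$ absorbs the number of branches. Finally, the small-$l$ base cases ($l \le 4$) must be checked directly, in the spirit of the $l=2$ analysis carried out in Lemma \ref{lemma:3.1}.
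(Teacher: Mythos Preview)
Your plan is essentially the paper's own proof. The paper also assumes $\varepsilon_1 = 1$, inducts on $l$, and splits on $\varepsilon_2$: for $\varepsilon_2 \geq 3$ it resolves two indices (reducing to $C_r(l-2,p_2,p_3-1)$); for $\varepsilon_2 = 2$ it resolves three (reducing to $C_r(l-3,p_2-1,p_3)$); for $\varepsilon_2 = 1$ it further splits on $\varepsilon_3$, and in the worst sub-case $\varepsilon_2 = \varepsilon_3 = 1$ it resolves four indices (reducing to $C_r(l-4,p_2,p_3)$), in each branch distinguishing whether the successive ratios $m_{k_i}/m_{k_{i+1}}$ exceed thresholds of the form $lr$, $\tfrac{2lr}{q-2}$, $\tfrac{2lr}{q-1}$, $\tfrac{2lr}{q^2-q-1}$ --- exactly the mechanism you describe. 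Your weight bookkeeping ($\tfrac14,\tfrac12,\tfrac34$ for $\varepsilon_i = 1,2,\geq 3$) is a clean repackaging of what the paper does implicitly; the five reductions above remove total weight exactly $1$, $1$, $1$, $\tfrac54$, $1$ respectively, matching your requirement.
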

\begin{remark}
When $r=1$ and $A=0,$ one has $C_1(l, 0,0, q,n) = 0,$ since
\begin{align*}
&0=m_{k_1} \pm  m_{k_2} \pm \cdots \pm  m_{k_l} >m_{k_1} \left( 1 - \frac{1}{2} - \frac{1}{4} - \cdots - \frac{1}{2^{l-1}}\right) > 0.
\end{align*}
As a result there are no $m_{k_1}, \dots, m_{k_l}$ satisfying (\ref{pqtryg-equation}) for $A=0, r=1, q > 2.$ 
\end{remark}
\begin{remark}
Taking $m_k = 3^k$ for all $k \in\N$ we easily see that the factor $n^{\frac{l}{4}+\frac{p_2}{4}+\frac{p_3}{2}}$ in Lemma \ref{lemma:3.2} cannot be improved.
\end{remark}

\begin{proof}[Proof of Lemma \ref{lemma:3.2}] Without loss of generality we assume $\varepsilon_1 = 1.$ To prove the claim we use an induction on $l.$ We first discuss the case $l=2.$ We distinguish the following cases.
\begin{itemize}[leftmargin=0.15in]
\item $\varepsilon_2 = 1,2:$ Note
\begin{align*}
 m_{k_1} \left(1+ \frac{2}{q}\right) \geq m_{k_1} \pm m_{k_2} =  A \geq  m_{k_1} \left(1-\frac{2}{q} \right).
\end{align*}
Thus, $ A \frac{q}{q-2} \geq m_{k_1} \geq A \frac{q}{q+2},$ which with the lacunarity condition (\ref{lacunary}) implies that we can choose $m_{k_1}$ at most in finite $\log_q\frac{q+2}{q-2} \leq \left(40 \log_q\left( 4 \right) \log_q\left( \frac{2q}{q-2} \right) \log_q\left( \frac{16 q^2 }{q-2} \right)  \right) ^{\frac{1}{2}}$ ways. 
\item $\varepsilon_2 \geq 3:$ Similar to the proof of Lemma \ref{lemma:3.1}, we can choose $m_{k_1}$ in $n$ ways and $m_{k_2}$ in $\log_q(r/3)$ ways, giving in total at most $n \log_q(r/3)  $ solutions.
\end{itemize}
We now consider the general case. We suppose that the claim is true for all $l^\prime < l$ and we intend to show that it holds for $l.$ We first distinguish the following cases.
\begin{itemize}[leftmargin=0.15in]
\item $\varepsilon_2 \geq  3:$ The bounds (\ref{b:1}) hold also in this case i.e. we get that the number of solutions of Equation (\ref{pqtryg-equation}) is at most
\begin{align}
2n \, \log_q (2rl) \, C_r(l-2,p_2, p_3-1,q,n)  .\label{b:2.0}
\end{align}
\item $\varepsilon_2 = 2:$ We examine two possibilities.
\begin{itemize}[leftmargin=0.15in]
\item $\frac{m_{k_1}}{m_{k_2}} > lr:$ Here it is possible to apply the bounds (\ref{gen_bounds}) and deduce that $m_{k_1}$ can be chosen in $\log_q(2lr)$ ways.  Further considering the cases $\frac{m_{k_2}}{m_{k_{3}}} > lr$ and $\frac{m_{k_2}}{m_{k_{3}}} \leq lr,$ we end up having 
\begin{align*}
2 n \log_q(2lr) \log_q(lr) C_r(l-3,p_2-1,p_3,q,n)
\end{align*}
possible solutions.
\item $\frac{m_{k_1}}{m_{k_2}} \leq lr:$ We consider the cases:
\begin{itemize}[leftmargin=0.15in]
\item $\frac{m_{k_2}}{m_{k_3}} \leq \frac{2lr}{q-2}:$ Here $m_{k_1}, m_{k_2}$ and $m_{k_3}$ can be chosen in $n, \log_q(lr)$ and $\log_q \left( \frac{2lr}{q-2}\right)$ ways,  respectively, which yields to the following bound of the possible solutions: 
\begin{align*}
n \log_q(lr) \log_q \left( \frac{2lr}{q-2}\right) C_r(l-3,p_2-1, p_3,q,n).
\end{align*}
\item $\frac{m_{k_2}}{m_{k_3}} > \frac{2lr}{q-2}:$ Using Equation (\ref{pqtryg-equation}), we have
\begin{align*}
m_{k_1} \frac{rq}{q-1} >A > m_{k_2} \left( q-2\right) - lrm_{k_3} > \frac{m_{k_2} \left( q-2\right)}{2} > \frac{m_{k_1} \left( q-2\right)}{2lr} ,
\end{align*}
Thus, $m_{k_1}, m_{k_2},$ and $ m_{k_3}$ can be chosen at most in $\log_q\left(\frac{2lr^2q}{(q-1)(q-2)} \right), \log_q \left(lr \right)$ and $n$ ways, respectively, resulting at most
\begin{align*}
n \log_q\left(\frac{2lr^2q}{(q-1)(q-2)} \right) \log_q \left(lr \right) C_r(l-3,p_2-1,p_3, q,n) 
\end{align*}
possible solutions.
\end{itemize}
\end{itemize}
We derive that the number of solutions satisfying Equation (\ref{pqtryg-equation}) for $\varepsilon_2 = 2$ is at most
\begin{align}
4n \log_q\left(\frac{2lr^2q}{(q-1)(q-2)} \right) \log_q \left(lr \right) C_r(l-3,p_2-1,p_3, q,n) \label{b:2.1}
\end{align}
\item $\varepsilon_2 = 1, \varepsilon_3 = 2:$ The strategy used in the previous case, works here as well. The only difference is that instead of considering the cases $\frac{m_{k_2}}{m_{k_3}} > \frac{2lr}{q-2}$ and $\frac{m_{k_2}}{m_{k_3}} \leq \frac{2lr}{q-2},$ we discuss, respectively, $\frac{m_{k_2}}{m_{k_3}} > \frac{2lr}{q-1}$ and $\frac{m_{k_2}}{m_{k_3}} \leq \frac{2lr}{q-1}$. As a result, the bound (\ref{b:2.1}) becomes in this case
\begin{align}
4n \log_q\left(\frac{2lr^2q}{(q-1)^2} \right) \log_q \left(lr \right) C_r(l-3,p_2-1,p_3, q,n) \label{b:2.2}
\end{align}
\item $\varepsilon_2 = 1, \varepsilon_3 \geq 3:$ Here the bounds (\ref{b:2}) and (\ref{b:3}) are applicable giving us at most
\begin{align}
2 n \log_q\left( 2 r l\right) \, \log_q\left( r l\right) \, C_r(l-3, p_2, p_3-1,q, n) \label{b:2.3}
\end{align} 
possible solutions of Equation (\ref{pqtryg-equation}) for $\frac{m_{k_1}}{m_{k_2}} > lr$ and
\begin{align}
2 n \log_q\left(lr \right)\log_q\left( \frac{2lqr^2}{(q-1)^2}\right) C_r(l-3, p_2, p_3-1, q, n) \label{b:2.4}
\end{align}
solutions of Equation (\ref{pqtryg-equation}) for $\frac{m_{k_1}}{m_{k_2}} \leq lr.$ 
\item $\varepsilon_2 = 1, \varepsilon_3 = 1:$ This case we need to treat carefully. The idea is to show that the number of $m_{k_1}, m_{k_2}, m_{k_3}, m_{k_4}$ satisfying Equation (\ref{pqtryg-equation}) is of order $n.$ Thus, we discuss two further possibilities.
\begin{itemize}[leftmargin=0.15in]
\item $\frac{m_{k_1}}{m_{k_{2}}} > lr:$  Note that the bounds (\ref{gen_bounds}) apply in this case as well.  Thus, $m_{k_1}$ can be chosen in $\log_q(2lr)$ ways.  We discuss two further cases. 
\begin{itemize}[leftmargin=0.15in]
\item $\frac{m_{k_2}}{m_{k_{3}}} > lr$: Similar to $m_{k_1}$ we have $2 r m_{k_2}  > \pm \left(A - m_{k_1} \right) > \frac{2 m_{k_2}}{l}, $ hence if $m_{k_1}$ has been already chosen, $m_{k_2}$ can be chosen at most in $\log_q( r l)$ ways. 
\begin{itemize}[leftmargin=0.15in]
\item $\frac{m_{k_3}}{m_{k_4}} > lr:$ In the same manner, $m_{k_3}$ can be chosen at most in $\log_q( r l)$ ways and $m_{k_4}$ in $n$ ways.
\item$\frac{m_{k_3}}{m_{k_{4}}} \leq lr:$ We choose $m_{k_3}$ in $n$ ways and $m_{k_4}$ in $\log_q(lr)$ ways.
\end{itemize} 
The observations above lead to the following bound for the number of solutions of (\ref{pqtryg-equation}), when $\varepsilon_2 = 1, \varepsilon_3 = 1, \frac{m_{k_1}}{m_{k_{2}}} > lr, \frac{m_{k_2}}{m_{k_{3}}} > lr:$
\begin{align}
2n\log_q(2lr) \log^2_q( r l) \, C_r(l-4,p_2,p_3,q,n). \label{b:2.5}
\end{align}
\item $\frac{m_{k_2}}{m_{k_{3}}} \leq l r$: 
\begin{itemize}[leftmargin=0.15in]
\item $\frac{m_{k_3}}{m_{k_4}} > \frac{2lr}{q-1}:$ Using Equation (\ref{pqtryg-equation}), we have
\begin{align*}
\pm (A-m_{k_1}) = m_{k_2} \pm m_{k_3} \pm \varepsilon_4 m_{k_4} \pm \cdots \pm \varepsilon_l m_{k_l} > m_{k_3} \left( q-1\right) - lrm_{k_4} > \frac{m_{k_3} \left( q-1\right)}{2},
\end{align*}
leading to $\pm (A-m_{k_1})  > \frac{m_{k_2} \left( q-1\right)}{2lr}.$ On the other hand, again using Equation (\ref{pqtryg-equation}), we have
\begin{align*}
\pm (A-m_{k_1}) < m_{k_2} \frac{qr}{q-1}.
\end{align*}
Therefore, $m_{k_2}$ can be chosen at most in $\log_q \left(\frac{2lq r^2}{(q-1)^2} \right).$ Finally, $m_{k_3}$ and $m_{k_4}$ can be chosen in $\log_q(lr)$ and $n$ ways, respectively.
\item$\frac{m_{k_3}}{m_{k_4}} \leq \frac{2lr}{q-1}:$ We choose $m_{k_2}, m_{k_3}$ and $m_{k_4}$ in $n, \log_q(lr)$ and $\log_q\left( \frac{2lr}{q-1} \right)$ ways, respectively. 
\end{itemize} 
Thus, the two cases above give the following bound for the number of solutions of (\ref{pqtryg-equation}), when $\varepsilon_2 = 1, \varepsilon_3 = 1, \frac{m_{k_1}}{m_{k_{2}}} > lr, \frac{m_{k_2}}{m_{k_{3}}} \leq lr:$
\begin{align}
n \log_q(2lr) \log_q( r l) \left( \log_q\left( \frac{2lr}{q-1} \right)+ \log_q \left(\frac{2lq r^2}{(q-1)^2} \right)\right) \, C_r(l-4,p_2,p_3,q,n). \label{b:2.6}
\end{align}
 \end{itemize}
 \item $\frac{m_{k_1}}{m_{k_2}} \leq l r$: We discuss two further possibilities.
\begin{itemize}[leftmargin=0.15in]
\item $\frac{m_{k_2}}{m_{k_3}} \leq \frac{2lr}{q-1}:$  
\begin{itemize}[leftmargin=0.15in]
\item $\frac{m_{k_3}}{m_{k_4}} > \frac{2lr}{q^2-q-1}:$ We have
\begin{align*}
A > m_{k_3} \left(q^2 - q -1\right) - lrm_{k_4}  >\frac{m_{k_3} \left(q^2 - q -1\right) }{2} >  \frac{m_{k_1} \left(q^2 - q -1\right) (q-1)}{4 l^2 r^2}. 
\end{align*}
Thus, $m_{k_1}$ can be chosen in $\log_q\left(\frac{4ql^2r^3}{(q-1)^2(q^2-q-1)}\right)$ ways. Moreover, $m_{k_2}, m_{k_3}$ and $m_{k_4}$ can be chosen in $\log_q(lr), \log_q\left(\frac{2lr}{q-1}\right)$ and $n$ ways, respectively. 
\item $\frac{m_{k_3}}{m_{k_4}} \leq \frac{2lr}{q^2-q-1}:$ We choose $m_{k_1}, m_{k_2}, m_{k_3}$ and $m_{k_4},$ respectively, in $n, \log_q(lr), \log_q\left(\frac{2lr}{q-1}\right)$ and $\log_q\left(\frac{2lr}{q^2-q-1} \right)$ ways.
\end{itemize}
Summing up two above cases, we deduce that the number of solutions of Equation (\ref{pqtryg-equation}) for $\frac{m_{k_1}}{m_{k_2}} \leq lr, \frac{m_{k_2}}{m_{k_3}} \leq \frac{2lr}{q-1}$ is bounded by
\begin{align}
n \log_q(lr) \log_q\left(\frac{2 l r}{q-1}\right)  \left( \log_q\left(\frac{2 l r}{q^2-q-1}\right) + \log_q \left( \frac{4 l^2 r^3q}{(q^2-q-1)(q-1)^2} \right) \right) \, C_r(l-4,p_2,p_3,q,n) \label{b:2.7}
\end{align}
\item $\frac{m_{k_2}}{m_{k_3}} > \frac{2lr}{q-1}:$ We note
\begin{align*}
A > m_{k_2} (q-1) - lr m_{k_3} > \frac{m_{k_2}(q-1)}{2} >  \frac{m_{k_1}(q-1)}{2lr}.
\end{align*}
We conclude that $m_{k_1}$ can be chosen at most in $\log_q \left( \frac{2lr^2q}{(q-1)^2}\right)$ ways. Moreover,  we choose $m_{k_2}$ in $\log_q(lr)$ ways. Finally, $m_{k_3}$ and $m_{k_4}$ can be chosen at most in $2n \log_q(lr)$ ways, giving us at most 
\begin{align}
2n  \log^2_q(lr) \log_q \left( \frac{2lr^2q}{(q-1)^2}\right)   C(l-4, p_2, p_3, q, n) \label{b:2.8}
\end{align}
possible solutions.
\end{itemize}
\end{itemize}
\end{itemize}
Summing up the bounds (\ref{b:2.0})-(\ref{b:2.8}) we conclude 
\begin{align*}
C_r(l,p_2,p_3,q,n) <  \left(20 n  \log_q\left( \frac{2l rq}{q-2} \right) \log_q\left( ql r \right) \log_q\left( \frac{4l^2 q^2 r^2}{q-2} \right)  \right) ^{\frac{l}{4}+\frac{p_2}{4}+\frac{p_3}{2}}
\end{align*} 
and the proof of the lemma follows.
\end{proof}
\begin{proof} [Proof of Theorem \ref{tryg}]
 Let $\varphi_n(i \l)$ be a characteristic function of $S^{\mathrm{T}}_n$, i.e.
\begin{align*}
\varphi_n(i \l) = \int_0^{1} e^{ i \l S^{\mathrm{T}}_n(x)} dx = \int_0^{1} e^{ i \l \sum_{k=1}^n  \akn \cos(2 \pi m_kx) } dx.
\end{align*}
To show the mod-Gaussian convergence in the sense of Definition \ref{mod-Gauss-weak}, we need to show that the conditions \ref{H1} and \ref{H2} are satisfied. Note Condition \ref{H1} is satisfied by Theorem \ref{SZ}, it remains to show the validity of Condition \ref{H2}. i.e. $\varphi_n \left(\frac{i \l}{A_n} \right) \mathbbm{1}_{ \lbrace |\l| \leq A_n K\rbrace }$ is uniformly integrable for all $K \geq 0$. 
It is equivalent to show that as $n$ goes to infinity,
\begin{align*}
 \varphi_n \left( \frac{i\l}{A_n}\right) \mathbbm{1}_{ \lbrace |\l| \leq A_n K\rbrace }  \to e^{- \frac{\l^2}{2}}
\end{align*}
 in $L^1$ for all $K \geq 0.$
Following the notation (\ref{ckn}), we have
\begin{align*} 
\varphi_n \left( \frac{i \l}{A_n}\right) &= \int_0^1 e^{ i \l \sum_{k=1}^n \ckn \cos(2 \pi m_k x) } dx  = \frac{1}{2 \pi } \int_0^{2 \pi} e^{ i \l \sum_{k=1}^n \ckn \cos( m_k x) } dx.
\end{align*}
For simplicity, we denote by 
\begin{align*}
B_{k,n}(\l, x) &:= i \l \ckn \cos( m_k x) - \frac{\l^2  \ckn^2}{2} \cos^2( m_k x) - \frac{i \l^3 \ckn^3}{6}  \cos^3( m_k x) + \frac{ \l^4 \ckn^4}{24}  \cos^4( m_k x).
\end{align*} Therefore, we can write
\begin{align}
\varphi_n \left( \frac{i \l}{A_n}\right) &= \frac{1}{2 \pi }\int_0^{2 \pi} \prod_{k=1}^n \left( 1+B_{k,n}(\l, x)\right)   dx   \label{rel:1}\\ 
&+ \frac{1}{2 \pi }\int_0^{2 \pi} \left(e^{    \gamma_n(\l,x) } -1\right)\prod_{k=1}^n \left( 1+B_{k,n}(\l, x)\right)  dx \label{rel:2},
\end{align}
with $\gamma_n(\l,x) = \sum_{k=1}^n \left(i \l \ckn \cos(m_k x) - \log\left(1 + B_{k,n} (\l,x)\right) \right).$ Moreover, 
\begin{align*}
\gamma_n(\l,x) = O\left(|\l|^5 \sum_{k=1}^n |\ckn|^5 \right).
\end{align*}
In addition, 
\begin{align*}
& \left| 1+B_{k,n}(\l, x)\right|^2\\
&= \left(1-\frac{\l^2 \ckn^2}{2}  \cos^2 (m_k x) + \frac{\l^4 \ckn^4}{24} \cos^4(m_k x) \right)^2+\left(\l \ckn \cos(m_k x) - \frac{ \l^3 \ckn^3 }{6} \cos^3(m_k x) \right)^2 \\
&=  1-\frac{\l^6}{72} \ckn^6 \cos^6 (m_k x) + \frac{\l^8}{576}\ckn^8 \cos^8(m_k x) \leq 1+\frac{K^6}{72} \akn^6 + \frac{K^8}{576}\akn^8 .
\end{align*}
We conclude that term (\ref{rel:2}) is bounded by
\begin{align*}
&O\left(|\l|^5 \sum_{k=1}^n |\ckn|^5\right) \prod_{k=1}^n \sqrt{1+\frac{K^6}{72} \akn^6 + \frac{K^8}{576}\akn^8  }.
\end{align*}
Bearing in mind the notation (\ref{dn}), we derive 
\begin{align*}
&\int_{-KA_n}^{KA_n} \left|\frac{1}{2 \pi }\int_0^{2 \pi} \left(e^{    \gamma_n(\l,x) } -1\right) \prod_{k=1}^n \left( 1+B_{k,n}(\l, x)\right)   dx\right| d\l \\
& = O\left(A_n  K^5 \sum_{k=1}^n |\akn|^5  \prod_{k=1}^n  \sqrt{1+\frac{K^6}{72} \akn^6 + \frac{K^8}{576}\akn^8  }\right) \\
& = O\left( K^5 n^{3/2} d_n^6  e^{C n d_n^6}\right) \to 0,
\end{align*}
where we used conditions (\ref{dnq1}) and (\ref{dnq2}), to show the convergence to $0.$ Next we aim to show that the term (\ref{rel:1}) converges to $e^{-\frac{\l^2}{2}}$ in $L^1.$  We have
\begin{align}
&  1+B_{k,n}(\l, x) = D_{k,n}(\l)  \nonumber \\
&\cdot \left( 1+ \frac{F_{k,n}(\l)}{D_{k,n}(\l)} \cos( m_k x) + \frac{G_{k,n}(\l)}{D_{k,n}(\l)}  \cos(2 m_k x) + \frac{H_{k,n}(\l)}{D_{k,n}(\l)} \cos(3 m_k x) + \frac{J_{k,n}(\l)}{D_{k,n}(\l)}  \cos(4 m_k x)\right) ,\label{last_term}
\end{align}
where
\begin{align*}
&D_{k,n}(\l) = 1 - \frac{\l^2  \ckn^2}{4} + \frac{\l^4  \ckn^4}{64}, \quad F_{k,n}(\l) = i\l \ckn \left(1 - \frac{\l^2  \ckn^2}{8}  \right),\\
&G_{k,n}(\l) =  - \frac{\l^2  \ckn^2}{4} \left(1 + \frac{\l^2  \ckn^2}{6}\right), \quad H_{k,n}(\l) =- i \frac{\l^3  \ckn^3}{24} , \quad J_{k,n}(\l) = -\frac{\l^4  \ckn^4}{96}.
\end{align*}
We assume that $D_{k,n}(\l) \geq \frac{1}{2}$ for large enough $n$ (depending on $K$). From now on we treat the cases $1<q \leq 2$ and $q > 2,$ separately.
\begin{itemize}[leftmargin=0.15in]
\item  $1<q \leq 2:$ Using
\begin{align}
&\max_{1 \leq k \leq n, |\l| \leq KA_n} \lbrace \left|F_{k,n}(\l)\right| \rbrace\leq K d_n  \left(1+\frac{K^2 d_n^2}{6}\right), \label{bounds:1}\\
&\max_{1 \leq k \leq n, |\l| \leq KA_n} \lbrace \left|G_{k,n}(\l)\right|, \left|H_{k,n}(\l)\right|, \left|J_{k,n}(\l)\right| \rbrace\leq K^2 d_n^2  \left(1+\frac{K^2 d_n^2}{6}\right) \label{bounds:2},
\end{align}
we derive
\begin{align*}
&\left| e^{-\frac{\l^2}{2}} - \frac{1}{2 \pi }\int_0^{2 \pi} \prod_{k=1}^n \left( 1+B_{k,n}(\l, x)\right)  dx \right| \leq \left| e^{-\frac{\l^2}{2}} -\prod_{k=1}^n D_{k,n}(\l) \right|\\
&+\left(\prod_{k=1}^n D_{k,n}(\l) \right)\sum_{l=2}^n \sum_{p=0}^l  \left( K d_n  \left(1+\frac{K^2 d_n^2}{6}\right) \right)^l  \left( K d_n\right)^p C_r(l,p,q,n),
\end{align*}
where $C_r(l,p,q,n)$ is the number of solutions satisfying Equation (\ref{ptryg-equation}). Moreover, Lemma \ref{lemma:3.1} tells us that
$C_r(l,p,q,n) \leq \left(8 n \log_q(rl) \log_q \left(\frac{2r^2lq}{(q-1)^2}\right) \right)^{\frac{l+p}{3}},$ which implies that for every $\varepsilon > 0,$ there exist a constant $C_{\varepsilon, r}$ such that
\begin{align*}
C_r(l,p,q,n) \leq \left(  C_{\varepsilon, r} n^{1+\varepsilon}\right)^{\frac{l+p}{3}}.
\end{align*}
Thus,
\begin{align*}
&\sum_{l=2}^n \sum_{p=0}^l \ \left( K d_n  \left(1+\frac{K^2 d_n^2}{6}\right) \right)^l \left(Kd_n \right)^p C_r(l,p,q,n) \\
&\leq \left( \sum_{p=0}^\infty \left(  C^{1/3}_{\varepsilon, r} n^{\frac{1+\varepsilon}{3}} K d_n\right)^p \right)\sum_{l=2}^n   \left( K d_n \left(1+\frac{K^2 d_n^2}{6}\right) C^{1/3}_{\varepsilon, r} n^{\frac{1+\varepsilon}{3}}\right)^l \leq \left( d_n n^{\frac{1+\varepsilon}{3}}\right)^2 C_{n,K} ,
\end{align*}
where $C_{n,K}$ is independent of $\l$ and converges to some constant, when $n \to \infty$ (follows from the condition \ref{dnq1}).
On the other hand,
\begin{align}
& \left| e^{-\frac{\l^2}{2}} -\prod_{k=1}^n D_{k,n}(\l) \right| \leq e^{-\frac{\l^2}{2}} \left|1 - e^{\frac{\l^2}{2}+2\sum_{k=1}^n \log \left(1 - \frac{\l^2  \ckn^2}{8}  \right)} \right| \leq  C'  K^4 e^{-\frac{\l^2}{2}}  \sum_{k=1}^n \akn^4, \label{ebound}
\end{align}
for some constant $C'.$
Thus,
\begin{align*}
&\left| e^{-\frac{\l^2}{2}} - \frac{1}{2 \pi }\int_0^{2 \pi} \prod_{k=1}^n \left( 1+B_{k,n}(\l, x)\right)  dx \right| \leq C'  K^4 e^{-\frac{\l^2}{2}}  \sum_{k=1}^n \akn^4 +\left(\prod_{k=1}^n D_{k,n}(\l) \right) \left( d_n n^{\frac{1+\varepsilon}{3}}\right)^2 C_{n,K} \\
& \leq C'  K^4 e^{-\frac{\l^2}{2}}  \sum_{k=1}^n \akn^4 +\left| \prod_{k=1}^n D_{k,n}(\l) - e^{-\frac{\l^2}{2}} \right| \left( d_n n^{\frac{1+\varepsilon}{3}}\right)^2 C_{n,K} + e^{-\frac{\l^2}{2}}\left( d_n n^{\frac{1+\varepsilon}{3}}\right)^2 C_{n,K} \\
&\leq e^{-\frac{\l^2}{2}} \left( C'  K^4   \sum_{k=1}^n \akn^4 \left(1 + \left( d_n n^{\frac{1+\varepsilon}{3}}\right)^2 C_{n,K} \right) + \left( d_n n^{\frac{1+\varepsilon}{3}}\right)^2 C_{n,K}\right)\\
&\leq e^{-\frac{\l^2}{2}} \left( C'  K^4   n d_n^4 \left(1 + \left( d_n n^{\frac{1+\varepsilon}{3}}\right)^2 C_{n,K} \right) + \left( d_n n^{\frac{1+\varepsilon}{3}}\right)^2 C_{n,K}\right)
\end{align*}
which together with the condition (\ref{dnq1}) implies 
\begin{align*}
\int_{-KA_n}^{KA_n} \left| e^{-\frac{\l^2}{2}} - \frac{1}{2 \pi }\int_0^{2 \pi} \prod_{k=1}^n \left( 1+B_{k,n}(\l, x)\right)  dx \right| d\l \to 0,
\end{align*}
when $n \to \infty.$
\item  $q > 2:$ we keep using the bounds (\ref{bounds:1}) and (\ref{bounds:2}) for $F_{k,n}(\l)$ and $G_{k,n}(\l),$ respectively, and bound $H_{k,n}(\l)$ and $J_{k,n}(\l)$ as follows:
\begin{align*}
&\max_{1 \leq k \leq n, |\l| \leq KA_n} \lbrace \left|H_{k,n}(\l)\right|, \left|J_{k,n}(\l)\right| \rbrace \leq K^3 d_n^3  \left(1+\frac{K^2 d_n^2}{6}\right),
\end{align*}
implying
\begin{align*}
&\left| e^{-\frac{\l^2}{2}} - \frac{1}{2 \pi }\int_0^{2 \pi} \prod_{k=1}^n \left( 1+B_{k,n}(\l, x)\right)  dx \right| \leq \left| e^{-\frac{\l^2}{2}} -\prod_{k=1}^n D_{k,n}(\l) \right|\\
&+\left(\prod_{k=1}^n D_{k,n}(\l) \right)\sum_{l=2}^n \sum_{p_2=0}^l \sum_{p_3=0}^{l-p_2} \left( K d_n  \left(1+\frac{K^2 d_n^2}{6}\right) \right)^{l}  \left( K d_n\right)^{p_2+2p_3} C_r(l,p_2,p_3,q,n),
\end{align*}
where $C_r(l,p_2,p_3,q,n)$ is the number of solutions satisfying Equation (\ref{pqtryg-equation}). Moreover, Lemma \ref{lemma:3.2} tells us that
$C_r(l,p_2,p_3,q,n) \leq \left(20 n  \log_q\left( 2l r \right) \log_q\left( ql r \right) \log_q\left( \frac{4l^2 q^2 r^2}{q-2} \right)  \right) ^{\frac{l}{4}+\frac{p_2}{4}+\frac{p_3}{2}},$ which implies that for every $\varepsilon > 0,$ there exist a constant $C_{\varepsilon, r}$ such that
\begin{align*}
C_r(l,p,q,n) \leq \left(  C_{\varepsilon, r} n^{1+\varepsilon}\right)^{\frac{l}{4}+\frac{p_2}{4}+\frac{p_3}{2}}.
\end{align*}
Thus,
\begin{align*}
& \sum_{l=2}^n \sum_{p_2=0}^l \sum_{p_3=0}^{l-p_2} \left( K d_n  \left(1+\frac{K^2 d_n^2}{6}\right) \right)^{l}  \left( K d_n\right)^{p_2+2p_3} C_r(l,p_2,p_3,q,n) \\
&\leq \left( \sum_{p_2=0}^\infty \left(  C^{1/4}_{\varepsilon, r} n^{\frac{1+\varepsilon}{4}} K d_n\right)^{p_2} \right) \left( \sum_{p_3=0}^\infty \left(  C^{1/4}_{\varepsilon, r} n^{\frac{1+\varepsilon}{4}} K d_n\right)^{2p_3} \right) \\
&\cdot \sum_{l=2}^n   \left( K d_n  \left(1+\frac{K^2 d_n^2}{6}\right) C^{1/4}_{\varepsilon, r} n^{\frac{1+\varepsilon}{4}}\right)^l \leq \left( d_n n^{\frac{1+\varepsilon}{4}}\right)^2 C_{n,K} ,
\end{align*}
where $C_{n,K}$ is independent of $\l$ and converges to some constant, when $n \to \infty$ (follows from the condition (\ref{dnq2})), which together with (\ref{ebound}) implies
\begin{align*}
&\left| e^{-\frac{\l^2}{2}} - \frac{1}{2 \pi }\int_0^{2 \pi} \prod_{k=1}^n \left( 1+B_{k,n}(\l, x)\right)  dx \right| \leq C'  K^4 e^{-\frac{\l^2}{2}}  \sum_{k=1}^n \akn^4 +\left(\prod_{k=1}^n D_{k,n}(\l) \right) \left( d_n n^{\frac{1+\varepsilon}{4}}\right)^2 C_{n,K} \\
&\leq e^{-\frac{\l^2}{2}} \left( C'  K^4   n d_n^4 \left(1 + \left( d_n n^{\frac{1+\varepsilon}{4}}\right)^2 C_{n,K} \right) + \left( d_n n^{\frac{1+\varepsilon}{4}}\right)^2 C_{n,K}\right).
\end{align*}
As a result, we obtain
\begin{align*}
\int_{-KA_n}^{KA_n} \left| e^{-\frac{\l^2}{2}} - \frac{1}{2 \pi }\int_0^{2 \pi} \prod_{k=1}^n \left( 1+B_{k,n}(\l, x)\right)  dx \right| d\l \to 0,
\end{align*}
when $n \to \infty.$
\end{itemize}
The proof of the theorem follows. 
 \end{proof}

\section{Proof of Theorems \ref{walsh-modG}, \ref{walsh-LLT} and \ref{walsh-speed}} \label{Section:4}

Let $k = \sum_{i=0}^\infty x_i 2^i$ and $l = \sum_{i=0}^\infty y_i 2^i,$ where $x_i, y_i = 0$ or $1.$ Then we define 
\begin{align*}
k\oplus l = \sum _{i=0}^\infty |x_i - y_i| 2^i.
\end{align*}
Note that this operation is associative, i.e. $(k\oplus l) \oplus p = k\oplus (l \oplus p). $ Moreover, for any $k, l, p \in \mathbb{N},$ we have that $k \oplus l = p $ is equivalent to $k = l \oplus p.$ In addition, $W_k(x) W_l(x) = W_{k \oplus l} (x)$ for all $x.$ 
\begin{lemma} \label{lemma:4.1}
Let $(m_k)_{k\geq 1}$ be any sequence satisfying the condition (\ref{lacunary}) with $q\geq 2.$ There are no solutions satisfying 
\begin{align}\label{walsh-equation-0}
m_{k_1} \oplus m_{k_2} \oplus \cdots \oplus  m_{k_l} = 0, 
\end{align} 
where $1 \leq k_l < \dots < k_1 \leq n, l \in \mathbb{N}.$
 \end{lemma}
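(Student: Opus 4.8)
The plan is to exploit the way the lacunarity hypothesis $q\ge 2$ forces the terms to separate in binary: the leading binary digit of the largest term among $m_{k_1},\dots,m_{k_l}$ can never be cancelled by the $\oplus$-contributions of the smaller ones, so the XOR sum always carries that bit and hence is nonzero.

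First I would record that, since the sequence $(m_k)_k$ is (strictly) increasing and the indices satisfy $k_1>k_2>\dots>k_l$, the term $m_{k_1}$ is the strictly largest of $m_{k_1},\dots,m_{k_l}$. Let $b$ be the position of its leading binary digit, so that $2^b\le m_{k_1}<2^{b+1}$. The case $l=1$ is then immediate, since $m_{k_1}>0$ cannot equal $0$.

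Next I would invoke the lacunarity condition \eqref{lacunary}: because $k_1>k_2$ and $q\ge 2$, we have $m_{k_1}/m_{k_2}\ge q^{k_1-k_2}\ge q\ge 2$, whence $m_{k_2}\le m_{k_1}/2<2^b$. As $m_{k_i}\le m_{k_2}$ for every $i\ge 2$, all of $m_{k_2},\dots,m_{k_l}$ lie strictly below $2^b$, so their binary expansions vanish in every position $\ge b$. Reading off the $b$-th digit of $m_{k_1}\oplus m_{k_2}\oplus\cdots\oplus m_{k_l}$, only $m_{k_1}$ contributes a $1$ there; thus the $b$-th digit of the XOR sum equals $1$, the sum is nonzero, and Equation \eqref{walsh-equation-0} has no solution.

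There is no genuine obstacle in this argument; it is essentially a one-line observation about leading bits. The only subtle point is that the bit-separation step $m_{k_2}<2^b$ relies precisely on $q\ge 2$ (and not merely $q>1$), and it is the borderline case $q=2$, combined with $k_1-k_2\ge 1$, that is used at full strength. Accordingly I would flag in the write-up that the hypothesis $q\ge 2$ is exactly what makes the highest bit of the dominant term immune to cancellation.
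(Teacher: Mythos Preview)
Your argument is correct and is essentially the same as the paper's own proof: both isolate the leading binary digit $\alpha$ (your $b$) of $m_{k_1}$, use $m_{k_1}/m_{k_2}\ge 2$ to force $m_{k_2},\dots,m_{k_l}<2^{\alpha}$, and conclude that this top bit survives in the XOR sum. The only difference is cosmetic---you spell out the case $l=1$ and the telescoping $m_{k_1}/m_{k_2}\ge q^{k_1-k_2}$ explicitly.
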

\begin{proof}
Let $\alpha$ be the largest exponent of $2$ with a nonzero coefficient in $m_{k_1}.$ As $\frac{m_{k_1}}{m_{k_2}} \geq 2,$  the largest exponent of $2$ with nonzero coefficient in $m_{k_2}$, can be at most $\alpha - 1.$ We obtain 
\begin{align*}
0 = m_{k_1} \oplus m_{k_2} \oplus \cdots \oplus  m_{k_l} \geq 2^\alpha > 0.
\end{align*}
Therefore, there are no $m_{k_1}, \dots, m_{k_l}$ satisfying Equation (\ref{walsh-equation-0}).
\end{proof}

\begin{lemma}\label{lemma:4.2}
Let $(m_k)_{k\geq 1}$ be a sequence satisfying the condition (\ref{lacunary}) with $1<q<2.$ The number of solutions $C(l,q,n)$ of the equation
\begin{align}\label{walsh-equation}
m_{k_1} \oplus m_{k_2} \oplus \cdots \oplus  m_{k_l} = A, 
\end{align} 
where $1 \leq k_l < \dots < k_1 \leq n, l \in \mathbb{N}, A\in \mathbb{Z}^{+},$ is at most $\left(2(\gamma + 7)n \log_q^2 (2) \right)^\frac{l}{3},$ where $\gamma$ is an integer such that $1 + \frac{1}{2^{\gamma}} \leq q < 1 + \frac{1}{2^{\gamma-1}}.$ 
\end{lemma}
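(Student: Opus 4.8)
The plan is to mirror the inductive scheme of Lemma \ref{lemma:3.1}, performing an induction on $l$ in which the three largest summands $m_{k_1}>m_{k_2}>m_{k_3}$ are analysed and removed at each step, reducing the count to $C(l-3,q,n)$. Two elementary facts about the binary expansions drive the argument. Writing $\alpha_i$ for the position of the leading (most significant) nonzero binary digit of $m_{k_i}$, the lacunarity (\ref{lacunary}) forces the number of indices $k\le n$ with $m_k\in[2^{j},2^{j+1})$ to be at most $1+\log_q 2\le 2\log_q 2$ (since $q\le 2$ gives $\log_q 2\ge 1$); thus, once the leading bit of a term is fixed, the term itself is determined up to $2\log_q 2$ choices. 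This is the source of the two factors $\log_q^2 2$.

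The parameter $\gamma$ enters through a cancellation estimate, which is the heart of the proof. If two summands $m_{k_i},m_{k_j}$ share the same leading bit $\alpha$, then, because their ratio is at least $q\ge 1+2^{-\gamma}$ while both lie in $[2^\alpha,2^{\alpha+1})$, their difference is at least $2^\alpha\frac{q-1}{q}\ge 2^{\alpha-\gamma-1}$; since agreement on all bits above the highest differing bit $\beta$ forces $|m_{k_i}-m_{k_j}|<2^{\beta+1}$, we obtain $\mathrm{lead}(m_{k_i}\oplus m_{k_j})=\beta\in[\alpha-\gamma-1,\alpha-1]$. In other words, XOR-cancellation of two equal leading bits cannot depress the leading bit by more than $\gamma+1$ positions. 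This is precisely the quantitative input that replaces the clean arithmetic sandwiching used in Lemma \ref{lemma:3.1}, and it is what produces the linear factor $\gamma+7$.

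With these facts, I would run the case analysis on the leading-bit pattern of the top three terms. When $\alpha_1>\alpha_2$ the bit $\alpha_1$ of the whole XOR equals the bit $\alpha_1$ of $m_{k_1}$ and is unaffected by the smaller terms, so $\alpha_1=\mathrm{lead}(A)$ is pinned down and $m_{k_1}$ costs only $\le 2\log_q 2$ choices; the remaining terms are absorbed into $A\oplus m_{k_1}$. When several of the largest terms share the leading bit $\alpha_1$, I replace the colliding pair by $m'=m_{k_1}\oplus m_{k_2}$, whose leading bit lies in a window of length $\gamma+1$ below $\alpha_1$ by the cancellation estimate, and compare this window with $\alpha_3$; in each subcase the leading bit of $A$ (or of the partially reduced target) confines the dominant term's leading bit to at most $\gamma+1$ positions and hence to at most $(\gamma+1)\cdot 2\log_q 2$ values, while exactly one of the three terms is left to range freely over its $\le n$ admissible indices. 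Collecting the at most three resulting estimates gives the per-triple factor $\le 2(\gamma+7)\,n\log_q^2 2$, and the induction hypothesis then yields the stated bound; the base cases $l\le 2$ are checked directly by the same two facts.

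The step I expect to be the main obstacle is exactly the cancelling case in which a block of several largest summands share one leading bit, so that the top bit of $A$ no longer reveals $\alpha_1$: here one must guarantee both that the leading bit is recovered up to the $O(\gamma)$ ambiguity supplied by the cancellation estimate and that the bookkeeping never spends more than a single free ($n$-fold) choice per three removed indices, so that the exponent $l/3$ is preserved. Controlling ties among three or more leading bits, as in the passage from (\ref{b:2}) and (\ref{b:3}) to the summed bound of Lemma \ref{lemma:3.1}, is the delicate point.
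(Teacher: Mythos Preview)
Your proposal is correct and matches the paper's proof: induction on $l$ removing three indices per step, the same leading-bit cancellation bound $\alpha-\beta\le\gamma$ (derived exactly as you do from $q\ge 1+2^{-\gamma}$), and a case split which the paper organizes by the thresholds $m_{k_1}/m_{k_2}\gtrless 2$ and $m_{k_2}/m_{k_3}\gtrless 2$ rather than by $\alpha_1$ versus $\alpha_2$, but with the same bookkeeping of one free index and two $O((\gamma+1)\log_q 2)$-constrained indices per triple. The subcase you flag as delicate---three top terms with coinciding or nearly coinciding leading bits---is dispatched in the paper without the cancellation estimate at all: one simply chooses $m_{k_1}$ freely in $n$ ways, and then lacunarity alone (via $m_{k_1}/m_{k_2}<2$, $m_{k_2}/m_{k_3}<2$) confines each of $m_{k_2},m_{k_3}$ to at most $\log_q 2$ choices.
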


\begin{proof}
Let $\alpha$ be the largest exponent of $2$ with a nonzero coefficient in $m_{k_1}.$ Then using Equation (\ref{walsh-equation}), we get
\begin{align}
A = m_{k_1} \oplus m_{k_2} \oplus \cdots \oplus  m_{k_l} \leq 2^\alpha + 2^{\alpha - 1} + \cdots + 1 < 2 m_{k_1}. \label{estim:4.2}
\end{align}
  We prove the theorem by using induction on $l.$ 
  
For $l=2,$ we have $m_{k_1} \oplus m_{k_2} = A.$ Let $\alpha$ be the largest exponent of $2$ with a nonzero coefficient in $m_{k_1}.$ Then we can write $m_{k_1} = 2^\alpha + d_{\a-1}2^{\alpha-1} + \cdots +d_0,$ where $d_i = 0$ or $1.$ We distinguish two possibilities.
\begin{itemize}[leftmargin=0.15in]
\item $2^\alpha \leq A:$ This together with the bound (\ref{estim:4.2}) implies that $2A > m_{k_1} >\frac{A}{2}.$ As a result the number of solutions is bounded by $\log_q(4).$ 
\item $2^\alpha > A:$ Then we have that the largest exponent of $2$ with a nonzero coefficient in $m_{k_2}$ must be again $\alpha.$  Suppose $m_{k_1}$ and $m_{k_2}$ are identical in all the upper $\alpha - \beta + 1$ entries, i.e. 
\begin{align*}
m_{k_1} &= 2^\alpha + d_{\alpha-1} 2^{\alpha-1} + \cdots +d_\beta 2^\beta + 2^{\beta - 1} + d_{\beta-2} 2^{\beta - 2} +\cdots + d_0,\\
m_{k_2} &= 2^\alpha + d_{\alpha-1} 2^{\alpha-1} + \cdots + d_\beta 2^\beta + d^\prime_{\beta-2} 2^{\beta - 2} + \cdots + d^\prime_0.
\end{align*}
 Thus we have
\begin{align*}
1+ \frac{1}{2^\gamma} \leq q \leq \frac{m_{k_1}}{m_{k_2}} \leq \frac{2^\alpha + d_{\alpha-1} 2^{\alpha-1} + \cdots + d_\beta 2^\beta + 2^{\beta - 1} + \cdots + 1}{2^\alpha + d_{\alpha-1} 2^{\alpha-1} + \cdots + d_\beta 2^\beta } \leq 1 + \frac{1}{2^{\alpha-\beta}}.
\end{align*} 
Hence, we have $\alpha - \beta \leq \gamma.$
On the other hand, $A=m_{k_1} \oplus m_{k_2}  \geq 2^{\beta-1} $ and we derive $\alpha \leq \log_2 2A + \gamma.$ Since $m_{k_1} < 2^{\alpha + 1},$ we obtain  $A < m_{k_1} < A 2^{\gamma + 2}$ (the left bound follows from (\ref{estim:4.2})). As a result, the number of solutions is bounded by $\log_q \left(2^{\gamma+2} \right).$  
\end{itemize}
We conclude that the number of solutions for $l=2$ is at most $(4+\gamma) \log_q (2).$

Next we assume that the statement of the lemma holds for all $l^\prime < l,$ we want to prove that it is true also for $l.$  We discuss two possibilities.
\begin{itemize}
\item $\frac{m_{k_1}}{m_{k_{2}}} \geq 2:$  Then $2^\alpha \leq A,$ which together with the estimate (\ref{estim:4.2}) implies that  $m_{k_1}$ can be chosen at most $\log_q (4)$ ways. We discuss two further  cases.
\begin{itemize}[leftmargin=0.15in]
\item $\frac{m_{k_2}}{m_{k_3}} < 2:$ Then we can choose $m_{k_2}$ at most in $n$ and $m_{k_3}$ in $\log_q (2)$ ways.
\item $\frac{m_{k_2}}{m_{k_3}} \geq 2:$ Let $\beta$ be the largest exponent of $2$ with a nonzero coefficient in $m_{k_2}.$ Then we have 
\begin{align*}
2^{\beta + 1} > m_{k_2} \oplus \cdots \oplus  m_{k_l} = A \oplus m_{k_1} \geq 2^{\beta} .
\end{align*}
 Thus, if $m_{k_1}$ is already chosen, we can choose $m_{k_2}$ and $m_{k_3}$ at most in $\log_q (4)$  and $n$ ways, respectively. 
\end{itemize}
We deduce that for $\frac{m_{k_1}}{m_{k_{2}}} \geq 2,$ the number of solutions is at most 
\begin{align}
n \log_q (4) \log_q (8) C(l-3,q,n). \label{bbb1}
\end{align}
\item $\frac{m_{k_1}}{m_{k_{2}}} < 2:$  We distinguish two further cases.
\begin{itemize}[leftmargin=0.15in]
\item $\frac{m_{k_2}}{m_{k_3}} < 2:$ We can choose $m_{k_1}$ at most in $n$ ways, then both $m_{k_2}$ and $m_{k_3}$ can be chosen at most  in $\log_q (2)$ ways.
\item $\frac{m_{k_2}}{m_{k_3}} \geq 2:$  As before let $\alpha$ be the largest exponent of $2$ with a nonzero coefficient in $m_{k_1}$. We discuss the following possibilities.
\begin{itemize}[leftmargin=0.15in]
 \item $2^{\alpha} \leq A:$ This together with the bound (\ref{estim:4.2}) implies that $2A > m_{k_1} >\frac{A}{2}.$ So we can choose $m_{k_1}, m_{k_2}$ and $m_{k_3}$ at most in $\log_q (4), \log_q (2)$ and $n$ ways, respectively. 
 \item $2^{\alpha} > A:$ Then the largest exponent of $m_{k_2}$ is $\a$ as well. Assuming $m_{k_1}$ and $m_{k_2}$ share the first $\alpha-\beta+1$ exponents in the dyadic expansion, we deduce $ \alpha - \beta \leq \gamma.$ 
 
If $2^{\beta-1} \leq A,$ then $A  < m_{k_1} < A 2^{\gamma + 2}$, thus $m_{k_1}$ can be chosen at most in $\log_q \left( 2^{\gamma+2} \right)$ ways, and $m_{k_2}$ and $m_{k_3}$ at most in $\log_q (2)$ and $n$ ways, respectively.

It remains to discuss the case $2^{\beta-1} > A$. There are at most $n$ choices for $m_{k_1}$ and $\log_q (2)$ choices for $m_{k_2}.$ As $2^{\beta-1} > A$, we deduce  $2^{\beta-1} \leq m_{k_3}.$ Hence, we obtain 
\begin{align*}
q^2 \leq \frac{m_{k_1}}{m_{k_3}} \leq 2^{\alpha - \beta + 2} \leq 2^{\gamma + 2}.
\end{align*}
 Thus, $m_{k_3}$ can be chosen at most in $\log_q \left( \frac{2^{\gamma + 2}}{q^2} \right) \leq \log_q \left( 2^{\gamma + 2} \right)$ ways. 
\end{itemize}
\end{itemize}
We conclude that the number of solutions for $\frac{m_{k_1}}{m_{k_2}} <2$ is at most 
\begin{align}
n \log_q (2) \log_q \left(2^{2\gamma+8} \right) C(l-3,q,n). \label{bbb2}
\end{align}
\end{itemize}
Summing up the bounds (\ref{bbb1}) and (\ref{bbb2}) and using the induction, we deduce that the number of solutions is at most
\begin{align*}
\left(n \log_q (2) \log_q \left(2^{2\gamma+14} \right) \right)^{\frac{l}{3}}
\end{align*}
and the proof follows.
 \end{proof}
\begin{proof}[Proof of Theorem \ref{walsh-modG}]
Let $\varphi_n(z)$ be the moment generating function of  $S^{\mathrm{W}}_n,$ i.e.
\begin{align*}
\varphi_n(z) = \E[e^{ z S^{\mathrm{W}}_n}] = \int_0^1 e^{ z S^{\mathrm{W}}_n(x) } dx = \int_0^1 e^{ z \sum_{k=1}^n \akn W_{m_k}(x) } dx.
\end{align*}
We first treat the case $q \geq 2.$  Using Lemma \ref{lemma:4.1}, we get
\begin{align}
\E[W_{m_{1}} \cdots W_{m_{n}}] = \E[W_{m_{1} \oplus  \dots \oplus m_{n}}] = 0 .\label{q2}
\end{align}
Next we aim to show that $W_{m_1}, \dots, W_{m_n} $ are indeed independent. Denoting by $Y_i := \frac{W_{m_i}+1}{2},$ we end up having Bernoulli$(\frac{1}{2})$ distributed $\left(Y_i\right)_{1 \leq i \leq n}$ random variables. Moreover, using the relation (\ref{q2}), we deduce that for every subset $1 \leq k_1 \leq \dots \leq k_p \leq n$ 
\begin{align}
\P[Y_{k_1} = 1, \dots, Y_{k_p} = 1] = \E[Y_{k_1} \cdots Y_{k_p}] = \E[Y_{k_1}] \cdots \E[Y_{k_p}] = \P[Y_{k_1} = 1] \cdots \P[ Y_{k_p} = 1]. \label{indep_var}
\end{align} 
This shows that $\left(Y_i\right)_{1 \leq i \leq n}$ are independent, which implies the independence of the random variables $\left(W_{m_i}\right)_{1 \leq i \leq n}.$ As a result, the moment generating function $\varphi_n(z)$ of $S^{\mathrm{W}}_n$ writes as 
 \begin{align*}
\E\left[e^{ z S^{\mathrm{W}}_n}\right]  &=  \prod_{k=1}^n \int_0^1 e^{ z  \akn W_{m_k}(x) } dx = \prod_{k=1}^n  \cosh \left(z\akn \right),
\end{align*}
so we have
\begin{align*}
\E\left[e^{ z S^{\mathrm{W}}_n}\right] e^{-\frac{A_n^2 z^2}{2}}  &=  e^{-\frac{A_n^2 z^2}{2}+ \sum_{k=1}^n \log  \left( \cosh \left(z\akn \right)\right)} .
\end{align*}
Let $|z| \leq n^{1/10}.$ Using $n d_n^5 \to 0,$ we derive $|z \akn | \leq 1$ for large enough $n,$ which together with the Taylor expansion implies
\begin{align*}
\E\left[e^{ z S^{\mathrm{W}}_n}\right] e^{-\frac{A_n^2 z^2}{2}} = e^{-\frac{z^4}{12} \sum_{k=1}^n \akn^4 + O\left( |z|^6 \sum_{k=1}^n \akn^6\right)} \to  e^{-\frac{z^4}{12} \kappa_4},
\end{align*}
as $n \to \infty.$
Now  we treat the case $1<q<2$ and let $|z| \leq n^{\min\lbrace \frac{\varepsilon}{3 } , \frac{1}{3}\rbrace}.$ We have
\begin{align*}
\E\left[e^{ z S^{\mathrm{W}}_n}\right] e^{-\frac{A_n^2 z^2}{2}} &= e^{-\frac{A_n^2 z^2}{2}}\int_0^1 e^{ z \sum_{k=1}^n \akn W_{m_k}(x) } dx \\
&= e^{-\frac{A_n^2 z^2}{2}} \int_0^1 \prod_{k=1}^n \left( \cosh \left(z\akn \right)+ W_{m_k}(x) \sinh \left(z\akn \right)\right)  dx\\
&= e^{-\frac{A_n^2 z^2}{2}} \left(\prod_{k=1}^n \cosh \left(z\akn \right)\right) \int_0^1 \prod_{k=1}^n \left( 1+ W_{m_k}(x) \tanh \left(z\akn \right)\right)  dx\\
&= e^{-\frac{A_n^2 z^2}{2}} \left(\prod_{k=1}^n \cosh \left(z\akn \right)\right) \\
&+ e^{-\frac{A_n^2 z^2}{2}} \left(\prod_{k=1}^n \cosh \left(z\akn \right)\right) \left(\int_0^1 \prod_{k=1}^n \left( 1+ W_{m_k}(x) \tanh \left(z\akn \right)\right)  dx - 1 \right).
\end{align*}
Using Lemma \ref{lemma:4.2}, we derive 
\begin{align*}
&\left|\int_0^1 \prod_{k=1}^n \left( 1+ W_{m_k} (x) \tanh \left(z\akn \right)\right)  dx - 1 \right| \leq \sum_{l=3}^n (|z| d_n)^l C(l,q,n) \\
&\leq \sum_{l=3}^n (|z| d_n)^l\left(2 (\gamma + 7)n  \log_q^2(2) \right)^{\frac{l}{3}} = O\left(\left(|z|d_n n^{1/3}\right)^3 \right) \to 0,
\end{align*}
where $C(l,q,n)$ is the number of solutions of Equation (\ref{walsh-equation}) and the convergence to $0$ follows from the condition $n^{1+\varepsilon} d_n^3 \to 0.$ We conclude
\begin{align*}
\E\left[e^{ z S^{\mathrm{W}}_n}\right] e^{-\frac{A_n^2 z^2}{2}} &= e^{-\frac{z^4}{12} \sum_{k=1}^n \akn^4 + O\left( |z|^6 \sum_{k=1}^n \akn^6\right)} \left(1 +  O\left(\left(|z|d_n n^{1/3}\right)^3 \right)\right) \to 1,
\end{align*}
as $n \to \infty$ and the proof of the theorem follows.
\end{proof}
\begin{proof}[Proof of Theorems \ref{walsh-LLT} and \ref{walsh-speed}]
In order to establish both theorems, it is enough to check that the mod-Gaussian convergence happens with a zone of control. Therefore we
check whether the condition \ref{Z1} is satisfied. We first discuss the case $q\geq 2$ and as before we assume $|\l| \leq n^{1/10}.$ Using inequality $|e^z - 1| \leq |z| e^{|z|}$ we derive 
\begin{align*}
&\left| \E \left[e^{ i \l S^{\mathrm{W}}_n} \right] e^{\frac{A_n^2 \l^2}{2}} -  1 \right| = \left|e^{\frac{A_n^2 \l^2}{2} + \sum_{k=1}^n \log\left(\cosh(\l \akn) \right)} -1\right|\\
& \leq \left|-\frac{\l^4}{12} \sum_{k=1}^n \akn^4 + O\left( \l^6 \sum_{k=1}^n \akn^6\right) \right|e^{\left|-\frac{\l^4}{12} \sum_{k=1}^n \akn^4 + O\left( \l^6 \sum_{k=1}^n \akn^6\right) \right| }.
\end{align*}
Since $\sum_{k=1}^n \akn^4 \to \kappa_4,$ there is a constant $C_1$ such that
$\left| \sum_{k=1}^n \akn^4 - \kappa_4 \right| \leq C_1$ for $n$ large enough. Moreover, $\l^6 \sum_{k=1}^n \akn^6 \leq \l^4 \left(\l^2 n d_n^6 \right) \leq C_2 \l^4.$ As a result
\begin{align*}
&\left| \E[e^{ i \l S^{\mathrm{W}}_n}] e^{\frac{A_n^2 \l^2}{2}} -  1 \right| \leq C_3 \l^4 e^{C_3 \l^4}, 
\end{align*}
where $C_3$ is a constant depending on $\kappa_4, C_1, C_2.$ We deduce that for $q\geq 2$ we have a zone of control with the parameters $\gamma = \frac{1}{10}$ and $v = w = 4.$ \\
Now we treat the case $1 < q < 2$ and let $|z| \leq n^{\min\lbrace \frac{\varepsilon}{3 } , \frac{1}{3}\rbrace}.$ Similar to the above case, we have
\begin{align*}
&\left| \E \left[e^{ i \l S^{\mathrm{W}}_n} \right] e^{\frac{A_n^2 \l^2}{2}} -  1 \right| \leq \left| e^{-\frac{A_n^2 z^2}{2}} \left(\prod_{k=1}^n \cosh \left(z\akn \right)\right) - 1 \right| \\
&+ \left| e^{-\frac{A_n^2 z^2}{2}} \left(\prod_{k=1}^n \cosh \left(z\akn \right)\right) \left(\int_0^1 \prod_{k=1}^n \left( 1+ W_{m_k} (x) \tanh \left(z\akn \right)\right)  dx - 1 \right) \right|\\
&\leq \left|-\frac{\l^4}{12} \sum_{k=1}^n \akn^4 + O\left( \l^6 \sum_{k=1}^n \akn^6\right) \right|e^{\left|-\frac{\l^4}{12} \sum_{k=1}^n \akn^4 + O\left( \l^6 \sum_{k=1}^n \akn^6\right) \right| }\\
&+ e^{\left|-\frac{\l^4}{12} \sum_{k=1}^n \akn^4 + O\left( \l^6 \sum_{k=1}^n \akn^6\right) \right| } O\left(\left(\l d_n n^{1/3}\right)^3 \right)
\leq C_3 \l^4 e^{C_3 \l^4 }.
\end{align*}
We conclude that for $1 < q < 2$ there is a zone of control with the parameters $\gamma = \min \lbrace \frac{\varepsilon}{3}, \frac{1}{3}\rbrace$ and $v=w=4.$
\end{proof}

\section{Proof of Theorems \ref{CLT_KAC_general}, \ref{CLT_shift}, \ref{mod-Gauss for Holder}, \ref{mod-Gauss specf}, \ref{spec-LLT} and \ref{spec-speed}} \label{Section:5}

\begin{proof}[Proof of Theorem \ref{CLT_KAC_general}] 
To prove the theorem, we first show that for $r \geq 1$ we have
\begin{align}
\frac{1}{n} \Vert \phi_r\left(x \right) + \cdots + \phi_r \left( 2^{n-1} x\right) \Vert_2^2 \leq \Vert \phi_r\Vert_2^2 + 2 \Vert\phi_r\Vert _2 \sum_{s \geq r+1} \Vert \phi_s \Vert_2. \label{lemma_for_CLT}
\end{align}
If we denote by $\Delta_n = f_n - f_{n-1}$ and $\Delta_1 = f_1,$ then $ \Vert \phi_n \Vert_2^2 = \sum_{k \geq n+1} \E\left[\Delta^2_k \right].$ Moreover, $f = \sum_{n \geq 1} \Delta_n$ in $L^2.$ We note
\begin{align*}
&\left| \left| \phi_r(x) + \cdots + \phi_r \left( 2^{n-1} x\right)\right| \right|_2^2 = \int \left( \phi_r(x) + \cdots + \phi_r\left( 2^{n-1} x \right)\right)^2 dx\\
&= \sum_{k=0}^{n-1} \int \phi_r^2\left( 2^k x\right) dx + 2 \sum_{0 \leq k < l \leq n-1} \int \phi_r\left( 2^k x\right) \phi_r\left( 2^l x\right) dx.
\end{align*} 
We first analyze the mixed terms.
\begin{align*}
&\int \phi_r\left( 2^k x\right) \phi_r\left( 2^l x\right) dx = \int \phi_r\left( x\right) \phi_r\left( 2^{l-k} x\right) dx \\
&= \int \left( \sum_{m \geq r+1} \Delta_m(x)\right) \left( \sum_{m' \geq r+1} \Delta_{m'}\left(2^{l-k}x \right)\right) dx = \sum_{m, m' \geq r+1} \int \Delta_m(x) \Delta_{m'} \left( 2^{l-k} x \right) dx.
\end{align*}
We note that if $m < m' + l-k,$ we get $\int \Delta_m(x) \Delta_{m'} \left( 2^{l-k} x \right) dx = 0.$ On the other hand, if $m > m' + l-k,$ we again obtain $\int \Delta_m(x) \Delta_{m'} \left( 2^{l-k} x \right) dx = 0,$ since $\E\left[\Delta_m | \mathcal{D}_{m'+l-k} \right] = 0.$ As a result, we deduce
\begin{align*}
&\int \phi_r\left( 2^k x\right) \phi_r\left( 2^l x\right) dx = \sum_{m \geq r+1} \int \Delta_{m+l-k}(x) \Delta_{m} \left( 2^{l-k} x \right) dx\\
&\leq \int \sqrt{\sum_{m \geq r+1} \Delta_{m+l-k}^2 (x)} \sqrt{\sum_{m \geq r+1} \Delta_m^2\left(2^{l-k}x \right)}dx\\
&\leq \left( \int \sum_{m \geq r+1} \Delta_{m+l-k}^2\left(x\right) dx \right)^{1/2} \left( \int \sum_{m \geq r+1} \Delta_{m}^2 \left(2^{l-k}x \right) dx \right)^{1/2} \leq \left| \left| \phi_r \right| \right|_2 \, \left| \left|\phi_{r+l-k} \right| \right|_2.
\end{align*}
We conclude
\begin{align*}
&\left| \left| \phi_r(x) + \cdots + \phi_r \left( 2^{n-1} x\right)\right| \right|_2^2 \leq n || \phi_r||_2^2 + 2 \sum_{0 \leq k < l \leq n-1} ||\phi_r||_2 \, ||\phi_{r+l-k}||_2 \\
& \leq n \Vert \phi_r\Vert_2^2 + 2  \Vert\phi_r\Vert_2 \left(  \Vert \phi_{r+1}\Vert_2 \left(n-1 \right) + \cdots +  \Vert\phi_{r+n-1}\Vert_2 \right)\\
& \leq n \Vert \phi_r\Vert_2^2 + 2  \Vert\phi_r\Vert_2 n \left(  \Vert\phi_{r+1}\Vert_2  + \cdots +  \Vert\phi_{r+n-1} \Vert_2 \right).
\end{align*}
As a result, the inequality (\ref{lemma_for_CLT}) holds. To show the central limit theorem, we write 
\begin{align*}
\frac{1}{\sqrt{n}} \left( f(x) + \cdots +f\left(2^{n-1} x \right) \right) &= \frac{1}{\sqrt{n}} \left(f_r(x) + \cdots + f_r \left(2^{n-1}x \right) \right) \\
&+ \frac{1}{\sqrt{n}} \left(\phi_r(x) + \cdots + \phi_r \left(2^{n-1} x\right) \right).
\end{align*}
The first term converges in law to a normal variable with mean zero and variance 
\begin{align*}
\frac{1}{n} \int \left(f_r(x) + \cdots + f_r\left(2^{n-1} x \right) \right)^2 dx.
\end{align*}
Indeed for all $k,$  $f_r(x) + \cdots + f_r \left( 2^{k-1} x \right)$ and $\left( f_r \left( 2^{k+r} x \right) + \cdots + f_r \left( 2^{k+r+s} x \right) \right)$ are independent. So the central limit theorem follows from the central limit theorem for $m$-dependent variables \cite{Diananda}.
The second term is arbitrary small for $r$ large. Therefore, 
\begin{align*}
\frac{f(x) + \dots + f\left( 2^{n-1} x\right)}{\sqrt{n}} \to N(0, \sigma^2),
\end{align*}
with $\sigma^2 = \lim_{n \to \infty} \frac{1}{n} \int\left( f(x) + \cdots + f\left(2^{n-1}x\right)\right)^2 dt.$ Next we show that the limit for $\sigma^2$ exists. The limit for $f_r$ clearly exists and the term for $\phi_r$ is arbitrary small. Therefore, for each $\varepsilon > 0$ there exists $r$ such that
\begin{align*}
& \lim_{n \to \infty} \int dx \frac{\left(f_r(x) + \cdots + f_r\left(2^{n-1} x \right) \right)^2}{n} - \varepsilon \leq \underline{\lim} \int dx \frac{\left(f(x) + \cdots + f\left(2^{n-1} x\right) \right)^2}{n}\\
&\leq \overline{\lim} \int dx \frac{\left(f(x) + \cdots + f\left(2^{n-1} x\right) \right)^2}{n} \leq \lim_{n \to \infty} \int dx \frac{\left(f_r(x) + \cdots + f_r\left(2^{n-1} x \right) \right)^2}{n} + \varepsilon.
\end{align*}
And the proof of the theorem follows.
\end{proof}
\begin{proof}[Proof of Theorem \ref{CLT_shift}]
As before, we denote by $\Delta_k = f_k - f_{k-1} $ so $f=\sum_k \Delta_k.$
We first show 
\begin{align}
\frac{1}{n} \E \left[\left(\phi_r + \dots + \phi_r \circ \theta^{n-1} \right)^2 \right] \leq ||\phi_r||_2^2 + 2 ||\phi_r||_2 \sum_{s \geq r+1} ||\phi_s||_2. \label{lemma_shift}
\end{align}
Note 
\begin{align*}
&\E \left[\left(\phi + \cdots + \phi_r \circ \theta^{n-1} \right)^2 \right] = \sum_{k=0}^{n-1} ||\phi_r \circ \theta^k||_2^2 + 2 \sum_{0 \leq k , l \leq n-1} \E \left[\phi_r \circ \theta^k \phi_r \circ \theta^l \right]\\
&=n || \phi_r||_2^2 + 2 \sum_{0 \leq k < l \leq n-1} \E \left[\phi_r \circ \theta^k \phi_r \circ \theta^l \right].
\end{align*}
Using $\phi_r = \sum_{m \geq r+1} \Delta_m,$ we get that the second term for the fixed $k,l,$ 
\begin{align*}
\E \left[\phi_r \circ \theta^k \phi_r \circ \theta^l \right] = \sum_{m,m' \geq r+1} \E[\Delta_m \circ \theta^k \Delta_{m'} \circ \theta^l] = \E \left[ \Delta_m \Delta_{m'} \circ \theta^{l-k}\right].
\end{align*}
For $m > m'+l-k,$ $\Delta_{m'} \circ \theta^{l-k}$ is $\mathcal{F}_{m' + l - k}$ measurable and $\E\left[\Delta_m | \mathcal{F}_{m' + l - k} \right] = 0.$ So the term is equal to $0.$ For $m < m'+l-k,$ we find 
\begin{align*}
&\E \left[\Delta_m \Delta_{m'} \circ \theta^{l-k} \right] = \int \mu \left(d y_1 \right) \cdots \mu \left(d y_{m'+l-k} \right) \Delta_m \left(y_1, \dots, y_m \right) \Delta_{m'} \left(y_{1+l-k}, \dots, y_{m'+l-k} \right)\\
&= \int \mu \left(d y_1 \right) \cdots \mu \left(d y_{m} \right)\int \mu \left(d y_{m+1} \right) \cdots \mu \left(d y_{m'+l-k} \right)  \Delta_m \left(y_1, \dots, y_m \right) \Delta_{m'} \left(y_{1+l-k}, \dots, y_{m'+l-k} \right)\\
&= \int \mu \left(d y_1 \right) \cdots \mu \left(d y_{m} \right) \Delta_m \left(y_1, \dots, y_m \right) \int \mu \left(d y_{m+1} \right) \cdots \mu \left(d y_{m'+l-k} \right)   \Delta_{m'} \left(y_{1+l-k}, \dots, y_{m'+l-k} \right).
\end{align*}
The last integral is calculated as follows
\begin{align*}
&\int \mu \left(d y_{m+1} \right) \cdots \mu \left(d y_{m'+l-k} \right)   \Delta_{m'} \left(y_{1+l-k}, \dots, y_{m'+l-k} \right) = \E \left[\Delta_{m'} | \mathcal{F}_{m'+l-k-m} \right] \left(y_{1+l-k}, \dots, y_{m'+l-k-m} \right).
\end{align*}
If $m'+(l-k) - m < m',$ then $l-k <m,$ we get $0.$
If $l-k \geq m,$ then $\Delta_m$ and $\Delta_{m'} \circ \theta^{l-k}$ are independent, thus we get $0.$
So we end up having only terms with $m' + l - k  = m.$
As a result,
\begin{align*}
& \sum_{0 \leq k < l \leq n-1} \sum_{m, m' \geq r+1} \E \left[\Delta_m \Delta_{m'} \circ \theta^{l-k} \right] = \sum_{0 \leq k < l \leq n-1}\sum_{m' \geq r+1,  m = m'+l-k} \E \left[ \Delta_m \Delta_{m'} \circ \theta^{l-k}\right]\\
&=\sum_{0 \leq k < l \leq n-1}\sum_{m' \geq r+1} \E \left[ \Delta_{m'+l-k} \Delta_{m'} \circ \theta^{l-k}  \right]\\
&\leq \sum_{0 \leq k < l \leq n-1} \E\left[\sqrt{\sum_{m' \geq r+1}  \Delta^2_{m'+l-k}} \sqrt{\sum_{m' \geq r+1}  \Delta^2_{m'} \circ \theta^{l-k}} \right]\\
&\leq \sum_{0 \leq k < l \leq n-1} \Vert \phi_{r+l-k} \Vert_2 \Vert \phi_{r} \Vert_2 \leq \Vert \phi_{r} \Vert_2 \left((n-1) \Vert \phi_{r+1} \Vert_2 + \cdots + \Vert \phi_{r+n-1} \Vert _2 \right)\\
&\leq n \Vert \phi_{r} \Vert_2 \sum_{s \geq r+1} \Vert \phi_{s} \Vert_2,
\end{align*}
which proves the inequality (\ref{lemma_shift}).
Finally, as for all $r$ we have $f_r$ and $f_r \circ \theta^r$ are independent, we can apply the central limit theorem for $m$-dependent random variables presented in \cite{Diananda} to 
\begin{align*}
\frac{f_r + f_r \circ \theta + \cdot + f_r \circ \theta^{n-1}}{\sqrt{n}},
\end{align*}
which together with the inequality (\ref{lemma_shift}) proves the theorem. The details are similar to the proof of Theorem \ref{CLT_KAC_general}.
\end{proof}

\begin{proof}[Proof of Theorem \ref{mod-Gauss for Holder}]
We divide the interval $[0,1]$  into $b_k$ parts and choose arbitrary numbers $x_{j,k}$ from these intervals. We define $g_k(x) = f(x_{j,k})$ for $\frac{j}{b_k} \leq x < \frac{j+1}{b_k},  \quad j=0,1,\dots,b_k-1$ and extend $g_k(x)$ to $\mathbb{R}$ by taking $g_k(x+1) = g_k(x).$ We obviously have $\left| g_k(x) - f(x) \right| < \frac{h}{b_k^{\alpha}}, \left|\int_0^1 g_k(x) dx\right| <  \frac{h}{b_k^{\alpha}}$ and  $\left|\int_0^1 g_k^2(x)dx - 1\right| <  \frac{2hM}{b_k^{\alpha}},$ where $M = \sup_{x \in[0;1]} |f(x)|.$\\
 Next we define $f_k(x) := g_k(m_k x).$ Note that since $\frac{m_{k+1}}{m_k} = b_k \in \lbrace 2, 3, \dots \rbrace$ the functions $\left(f_k(x) \right)_{k \geq 1}$  are independent. If we denote by $\mu_k := \int_0^1 f_k(x) dx$ and $\sigma_k^2 := \int_0^1 f_k^2(x)dx - \left( \int_0^1 f_k(x) dx\right)^2,$ then
 \begin{align*}
 &\left| f_k(x) -  f \left( m_k x \right) \right| < \frac{h}{b_k^{ \alpha}},\\
 &\left|\mu_k\right| <  \frac{h}{b_k^{\alpha}},\\
 &\left|\sigma_k^2 - 1\right| <  \frac{2hM}{b_k^{\alpha}} + \frac{h^2}{b_k^{2 \alpha}}.
\end{align*}  
Let $\varphi_n(i \l)$ be a characteristic function of $S^{\mathrm{H}}_n$, i.e.
\begin{align*}
\varphi_n(i \lambda) = \int_0^1 e^{ i \lambda S^{\mathrm{H}}_n(x) } dx.
\end{align*} 
Note that in order to show the mod-Gaussian convergence in the sense of Definition \ref{mod-Gauss-weak}, it suffices to show that 
\begin{align*}
\varphi_n \left( \frac{i \l}{A_n}\right) \mathbbm{1}_{ \lbrace |\l| \leq A_n K\rbrace }  \to e^{- \frac{\l^2}{2}}.
\end{align*}
in $L^1$ for all $K \geq 0.$
Using the notation (\ref{ckn}), we have 
\begin{align} 
&\varphi_n \left( \frac{i \l}{A_n}\right) = \int_0^{1} e^{ i \l \sum_{k=1}^n \ckn f\left(m_kx\right) } dx  \nonumber \\
& =  \int_0^{1} \left( e^{ i \l \sum_{k=1}^n \ckn f\left(m_kx\right) }  -  e^{ i \l \sum_{k=1}^n \ckn f_k(x) }\right) dx \label{3.1.1} \\
& + \int_0^{1}  e^{ i \l \sum_{k=1}^n \ckn f_k(x) } dx \label{3.1.2}
\end{align}
To estimate (\ref{3.1.1}), note that 
\begin{align*}
 & \left|\int_0^{1} \left( e^{ i \l \sum_{k=1}^n \ckn f\left(m_kx\right) }  -  e^{ i \l \sum_{k=1}^n \ckn f_k(x) }\right) dx\right| \\
 & \leq \int_0^{1} \left| e^{ i \l \sum_{k=1}^n \ckn \left(f\left(m_kx\right) - f_k(x) \right)}  - 1\right| dx = O\left( \l \sum_{k=1}^n |\ckn| \frac{h}{b_k^{ \alpha}} \right) .
\end{align*}
Thus, $\int_{-KA_n}^{KA_n} \left|\int_0^{1} \left( e^{ i \l \sum_{k=1}^n \ckn f\left(m_kx\right) }  -  e^{ i \l \sum_{k=1}^n \ckn f_k(x) }\right) dx\right| = O\left(A_n \sum_{k=1}^n|\akn| \frac{h}{b_k^\alpha}  \right) \to 0,$ when $n\to \infty.$ Next using the independence, (\ref{3.1.2}) can be written as 
\begin{align*} 
&\int_0^{1} e^{ i \l \sum_{k=1}^n \ckn f_k(x) } dx  = \prod_{k=1}^n \int_0^{1} e^{ i \l \ckn f_k(x) } dx = \prod_{k=1}^n e^{  i \l \ckn \mu_k - \half \l^2 \ckn^2 \sigma^2_k    + \gamma_{k,n}(\l)}\\
&= e^{-\frac{ \l^2}{2}} + e^{-\frac{\l^2}{2}} \left( e^{  i \l \sum_{k=1}^n \ckn \mu_k - \half \l^2 \sum_{k=1}^n \ckn^2 \left( \sigma^2_k - 1\right) + \sum_{k=1}^n \gamma_{k,n}(\l)} -1\right),
\end{align*}
where $|\gamma_{k,n}(\l)| = O(|\l \ckn|^3).$
So it remains to show that the last summand converges to $0$ in $L^1$ when $n\to \infty.$ Note that 
\begin{align*}
&\int_{-KA_n}^{KA_n} e^{-\frac{\l^2}{2}} \left| e^{  i \l \sum_{k=1}^n \ckn \mu_k - \half \l^2 \sum_{k=1}^n \ckn^2 \left( \sigma^2_k - 1\right) + \sum_{k=1}^n \gamma_{k,n}(\l)} -1\right| d\l \\
&= \int_{-KA_n}^{KA_n} e^{-\frac{\l^2}{2}} O \left( |\l| \sum_{k=1}^n |\ckn \mu_k| + \half \l^2 \sum_{k=1}^n \ckn^2 \left| \sigma^2_k - 1\right| + \sum_{k=1}^n |\gamma_{k,n}(\l)| \right) d\l \\
&= O \left( K \sum_{k=1}^n |\akn \mu_k| + \half K^2 \sum_{k=1}^n \akn^2 \left| \sigma^2_k - 1\right| + K^3 \sum_{k=1}^n |\akn|^3 \right)\int_{-KA_n}^{KA_n} e^{-\frac{\l^2}{2}}  d\l, \\
\end{align*}
which goes to $0$, as $n \to \infty.$ Hence, the proof of the theorem follows.
\end{proof}

 \begin{proof}[Proof of Theorem \ref{mod-Gauss specf}]
 We write 
 \begin{align*}
 x - [x]  - \half = - \frac{r_1(x)}{2^2} - \frac{r_2(x)}{2^3} - \cdots, 
\end{align*}   
where $\left( r_n(x)\right)_{n \geq 0}$ are the Rademacher functions. Note that $r_n(2^k x) = r_{n+k} (x),$ hence, 
$f \left(2^kx\right) = - \sum_{l=1}^\infty \frac{r_{l+k} (x)}{2^{l+1}}.$
As a result,
\begin{align*}
&\sum_{k=1}^n f \left(2^kx\right) = - \sum_{k=1}^n \sum_{l=1}^\infty \frac{r_{l+k} (x)}{2^{l+1}} = - \sum_{k=1}^n \sum_{p=k+1}^\infty \frac{r_{p} (x)}{2^{p-k+1}}=- \sum_{p=2}^{n+1} \sum_{k=1}^{p-1} \frac{r_{p} (x)}{2^{p-k+1}} - \sum_{p=n+2}^\infty \sum_{k=1}^n \frac{r_{p} (x)}{2^{p-k+1}} \\
&= - \sum_{p=2}^{n+1} r_{p} (x) \left(\frac{1}{2}-\frac{1}{2^p}\right)  - \sum_{p=n+2}^\infty r_{p} (x) \left( \frac{1}{2^{p-n}} - \frac{1}{2^p}\right).
\end{align*}
We assume that $|z| \leq n^{1/24},$ so the moment generating function of $S_n$ can be modified in the following way:
 \begin{align*}
 &E\left[e^{z S_n}\right]  = E\left[e^{z \sum_{k=1}^n f(2^kx)}\right] \\
 &=\prod_{p=2}^{n+1} E\left[e^{-z r_p(x) \left(\frac{1}{2}-\frac{1}{2^p}\right) }\right] \prod_{p=n+2}^{\infty} E\left[e^{-z r_p(x) \left(\frac{1}{2^{p-n}}-\frac{1}{2^p}\right) }\right]   \\
 &= e^{\sum_{p=2}^{n+1} \log \left(\cosh \left(z  \left(\frac{1}{2}-\frac{1}{2^p}\right) \right) \right)+\sum_{p=n+2}^{\infty}\log\left(\cosh \left( -z  \left(\frac{1}{2^{p-n}}-\frac{1}{2^p}\right) \right) \right) }   \\
 &= e^{ \frac{z^2}{2} \left( \sum_{p=0}^{n-1}    \left(\frac{1}{2}-\frac{1}{2^{p+2}}\right)^2+\sum_{p=0}^\infty    \left(\frac{1}{2^{p+2}}-\frac{1}{2^{p+n+2}}\right)^2 \right) -\frac{z^4}{12}\left( \sum_{p=0}^{n-1}    \left(\frac{1}{2}-\frac{1}{2^{p+2}}\right)^4+\sum_{p=0}^\infty    \left(\frac{1}{2^{p+2}}-\frac{1}{2^{p+n+2}}\right)^4 \right)+  O\left( |z|^6n\right) }   \\
&= e^{ \frac{z^2}{2} \left(- \frac{5}{12}+  \frac{1}{2^{ n+1}} - \frac{1}{3 \cdot 2^{2n+2}} + \frac{n}{4}+\frac{1}{12} \left(1 - \frac{1}{2^n} \right)^2 \right) } \\
&\cdot e^{-\frac{z^4}{12 } \left( -\frac{1}{15 \cdot 2^{4+4n}} - \frac{263}{ 105 \cdot  2^{4 }} + \frac{1}{7 \cdot 2^{2 +3 n}} - \frac{1}{ 2^{3 + 2 n}} + \frac{1}{ 2^{2 +  n}} + \frac{ n}{16} + \frac{1}{15 \cdot 2^4}  \left(1 -\frac{1}{2^n} \right)^4\right)  +  O\left( |z|^6n\right)} \\
&= e^{ \frac{z^2}{2} \left(- \frac{1}{3}+  \frac{1}{3 \cdot 2^{ n}}  + \frac{n}{4} \right)-\frac{z^4}{12 } \left(  - \frac{16}{ 105 } + \frac{1}{105 \cdot 2^{3 n-1}} - \frac{1}{ 5 \cdot 2^{1 + 2 n}} + \frac{7}{15 \cdot  2^{1 +  n}} + \frac{ n}{16} \right)  +  O\left( |z|^6n\right)}. 
\end{align*}  
Thus, we obtain 
\begin{align*}
&\E\left[e^{\frac{z S_n}{n^{1/4}}}\right] e^{- \frac{z^2\sqrt{n}}{8}} =   
e^{ \frac{z^2}{2\sqrt{n}} \left(- \frac{1}{3}+  \frac{1}{3 \cdot 2^{ n}}  \right)-\frac{z^4}{12n } \left(  - \frac{16}{ 105 } + \frac{1}{105 \cdot 2^{3 n-1}} - \frac{1}{ 5 \cdot 2^{1 + 2 n}} + \frac{7}{15 \cdot  2^{1 +  n}} + \frac{ n}{16} \right)  +  O\left(\frac{ |z|^6}{\sqrt{n}}\right)} \to e^{-\frac{z^4}{192}}.
\end{align*}
Hence, the proof of the theorem follows.
 \end{proof}
 
\begin{proof}[Proof of Theorems \ref{spec-LLT} and \ref{spec-speed}]
In order to establish both theorems, it is enough to check that the mod-Gaussian convergence happens with a zone of control. Therefore we
check whether the conditions \ref{Z1} and \ref{Z2} are satisfied. Using the proof of Theorem \ref{mod-Gauss specf} and the approach used to prove Theorems \ref{walsh-LLT} and \ref{walsh-speed}, we obtain that there is a zone of control with the parameters $\gamma = \frac{1}{24}, \quad v = 2, \quad w = 4.$ We leave the details to the reader.
\end{proof}
\section{Acknowledgments}
We would like to thank Prof. Ashkan Nikeghbali (Universit\"at Z\"urich) for helpful discussions.

\end{document}